\newtheorem{thm}{Theorem}[section]
\newtheorem{defn}[thm]{Definition}
\newtheorem{prop}[thm]{Proposition}
\newtheorem{rem}[thm]{Remark}
\newtheorem{example}[thm]{Example}
\newtheorem{exercise}[thm]{Exercise}
\newtheorem{notation}[thm]{Notation}
\DeclareMathOperator{\supp}{supp}
\DeclareMathOperator{\XL}{{\color{blue}{XL}}}
\begin{document}
\title{Special Functions from a Complex Viewpoint}
\author[Henrik L.\ Pedersen]{Henrik L.\ Pedersen\\Department of Mathematical Sciences\\University of Copenhagen\\Denmark}

  \date{OPSFA Summer School, July 29th -- August 2nd 2024}

\maketitle

\tableofcontents
\section*{Introduction. Overview of the course}
This document contains the lecture notes for a mini-course on special functions from a complex viewpoint given at the OPSFA Summer School
 \emph{OPSFA-S10 2024}, in the period July 29th -- August 2nd, 2024. The summer school was hosted by
Section of Mathematics -- Luciano Modica at the
Uninettuno University.

\emph{Going from the real to the complex and back again!}
As we focus on applications of complex analysis in the solution of problems for special functions on the real line, this sentence expresses the main theme of the course. Also some real variable methods are described. We shall give several examples of the use of the techniques in connection with special functions.

The course consists of five lectures, briefly outline in the next paragraph. Occasionally, a (partial) proof is given as illustration of how the techniques come into play. In the introduction to each lecture we mention relevant references to the theory. Some exercises are included at the end of each lecture.

In the first lecture focus is on classes of holomorphic functions in the upper half-plane as well as some background results from complex analysis. This theme is continued in the second lecture, where Stietjes functions are investigated. The third lecture contains a discussion of inverses of Euler's gamma function and furthermore so-called  multi or higher order gamma functions introduced by Barnes. After a brief introduction we consider the behaviour of remainders in asymptotic expansions of the logarithm of some of these functions. In the fourth lecture the main theme is the class of completely monotonic functions and related classes. Several examples illustrate the main tools in use; including complex methods. In the fifth lecture we deal with so-called generalized Bernstein functions of positive order including the relation to the class of generalized Stieltjes functions of positive order. Examples related to Euler's gamma function are given.

Main references are mentioned in the beginning of each lecture and I appologise for any references left out.

\section{First lecture: Motivation. Functions in the upper half-plane}

After a brief introduction to Euler's Gamma function (see \cite{Artin} and \cite{RSFM}) the theme of the course is motivated by describing a problem about monotonicity properties on the positive line of a function related to the volume of the unit ball in $\mathbb R^n$ and Euler's Gamma function.

Positivity is an important feature in our investigations and we present maximum principles for holomorphic and (sub)harmonic functions in unbounded domains of the complex plane. References: \cite{Koosis}.

Thereafter the class of Nevanlinna--Pick functions in the upper half-plane is introduced. These functions appear in subjects such as operator theory, functional analysis and in particular in the classical moment problem on the real line. The fundamental properties of this class are presented.
References: \cite{A}, \cite{D}, \cite[Appendix A.2]{Smy}.

 \subsection{Motivation, Euler's Gamma function}


 The Gamma function, denoted by $\Gamma$, was introduced by Leonhard Euler and may be defined as
 $$
 \Gamma(x)=\int_0^{\infty}e^{-t}t^{x-1}\, dt
 $$
 for $x>0$. The graph of $\Gamma$ is shown in Figure \ref{fig:gamma-plot}.
\begin{figure}[htb]
  \includegraphics[width=0.75\textwidth]{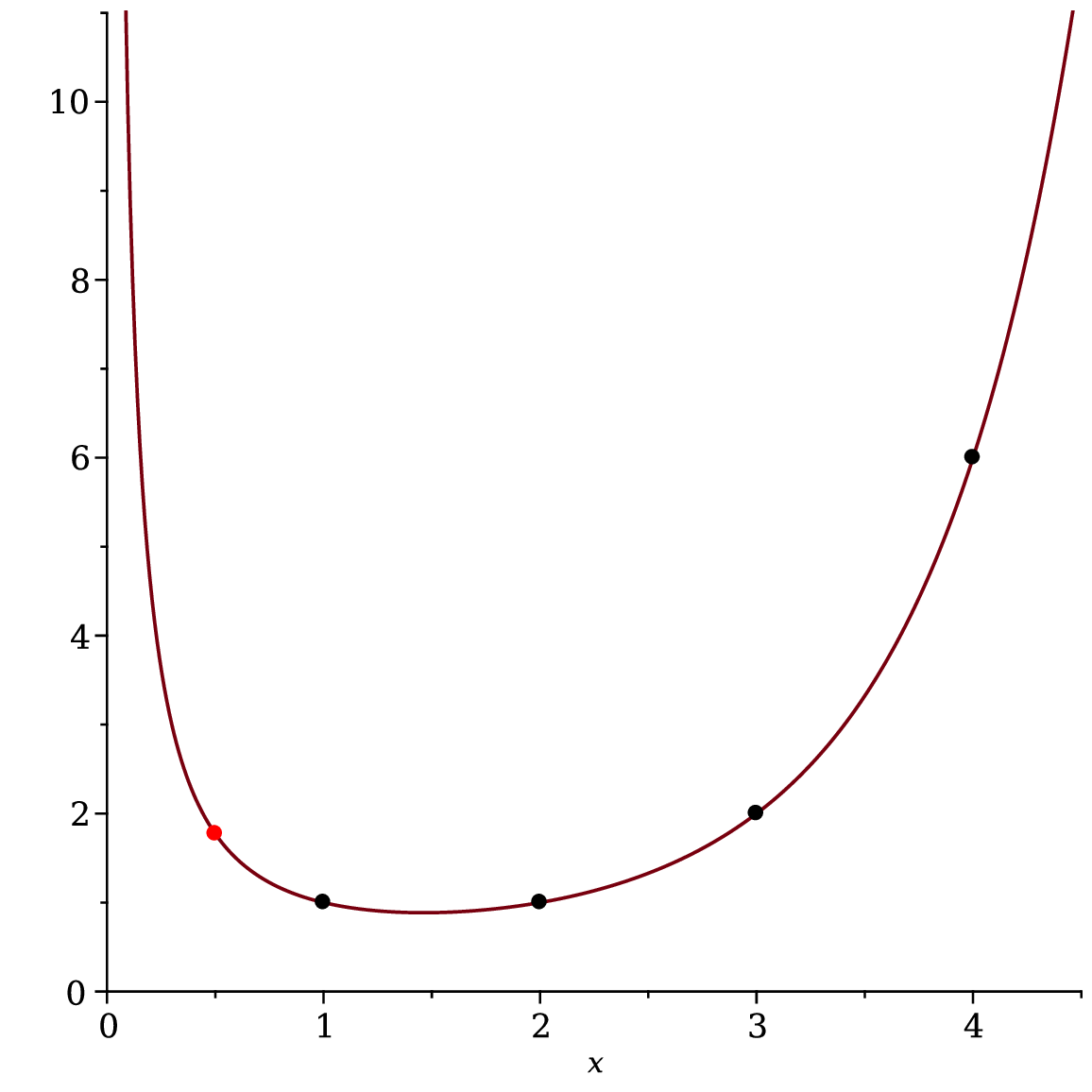}
  \caption{Graph of $\Gamma$ on $(0,\infty)$}
\label{fig:gamma-plot}
  \end{figure}
Let us highlight a few formulae that are easily verified by computation. The first one follows by partial integration and is the famous \emph{functional equation of the Gamma function}
 $$
 \Gamma(x+1)=x\Gamma(x),
 $$
 and its particular case $\Gamma(n+1)=n!$ for $n\in \{ 0,1,\ldots\}$.
As a curiosity let us remark that a change of variable shows that $\Gamma(\nicefrac{1}{2})=\sqrt{\pi}$. (This is illustrated by a red dot in Figure \ref{fig:gamma-plot}.)

The Gamma function can be extended to the right half-plane via the integral representation,
$$
\Gamma(z)=\int_0^{\infty}e^{-t}t^{z-1}\, dt,\quad \Re z>0.
$$
The Weierstra\ss\ infinite product representation is valid in the entire complex plane and is as follows:
$$
\frac{1}{\Gamma(z)}=ze^{\gamma z}\prod_{n=1}^{\infty}\left(1+\nicefrac{z}{n}\right)e^{-\nicefrac{z}{n}},\quad z\in \mathbb  C.
$$
Here $\gamma$ is the so-called Euler's constant chosen such that the right hand side equals $1$ for $z=1$.
In the complex plane cut along the negative real line, $\mathbb C\setminus (-\infty, 0]$ we define
\begin{equation}
\label{eq:logGamma}
\log \Gamma(z)=-\gamma z-\log z-\sum_{n=1}^{\infty}\left(\log (1+\nicefrac{z}{n})-\nicefrac{z}{n}\right).
\end{equation}
Notice that $\log$ on the right hand side means the principal branch of the logarithm, being real on the positive line. The $\log \Gamma$ thus defined is a holomorphic branch of the logarithm of $\Gamma$, but at $z$ it is not necessarily equal to the principal logarithm of the complex number $\Gamma(z)$.

Let us turn to the behaviour of $\Gamma(x)$ as $x$ tends to $\infty$ along the real line. The classical and well-known \emph{Stirling formula} can be expressed as the relation
 $$
 \Gamma(x)=\sqrt{2\pi}x^{x-\nicefrac{1}{2}}e^{-x}e^{\mu(x)}\quad \text{for}\ x>0, \quad \text{with}\ 0<\mu(x)<\nicefrac{1}{12x}.
 $$
As a consequence we have
 $$
 \nicefrac{\log \Gamma(x+1)}{x\log x}\to 1,\quad \text{as}\ x\to \infty.
 $$
 The question about the way in which the left hand side approaches $1$ is of course a question about monotonicity properties of the function
   \begin{equation}\label{eq:logGammaratio}
   f(x)=\frac{\log \Gamma(x+1)}{x\log x}.
   \end{equation}
   It turns out that its derivatives of odd order are positive, and the derivatives of even order are negative! The main ideas behind obtaining information on the sign changes of the derivatives of $f$ from $f$ itself \emph{are definitely not} to differentiate $f$ directly but rather to extend the function to the upper half-plane, and to use suitable maximum principles, thereby obtaining an integral representation. Here is the plan:
   \begin{enumerate}
    \item Extend $f$ to a holomorphic function in the complex plane cut along the non-positive real line.
    \item Consider its imaginary part and show that it is a positive harmonic function in the upper half-plane.
    \item From the positivity find an integral representation of the imaginary part using its boundary behaviour on the negative real line, and then find an integral representation of the function itself.
    \item Differentiate this integral representation and obtain the sign pattern of the derivatives.
   \end{enumerate}
   These steps are illustrated by the four plots in Figure \ref{fig:fourplots}.
\begin{figure}
 \begin{center}
 \begin{tabular}{cc}
 \includegraphics[scale=0.25]{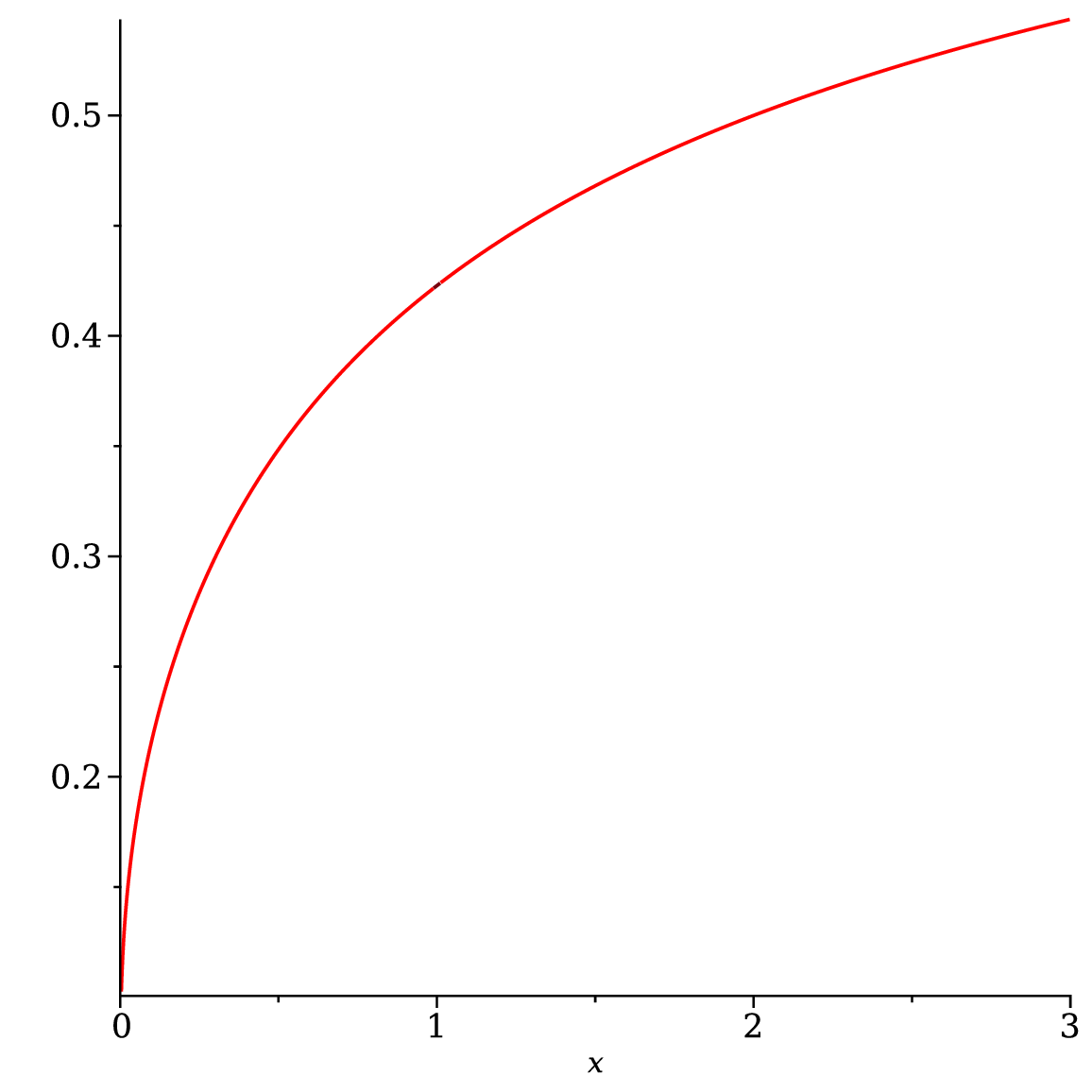}&
 \includegraphics[scale=0.25]{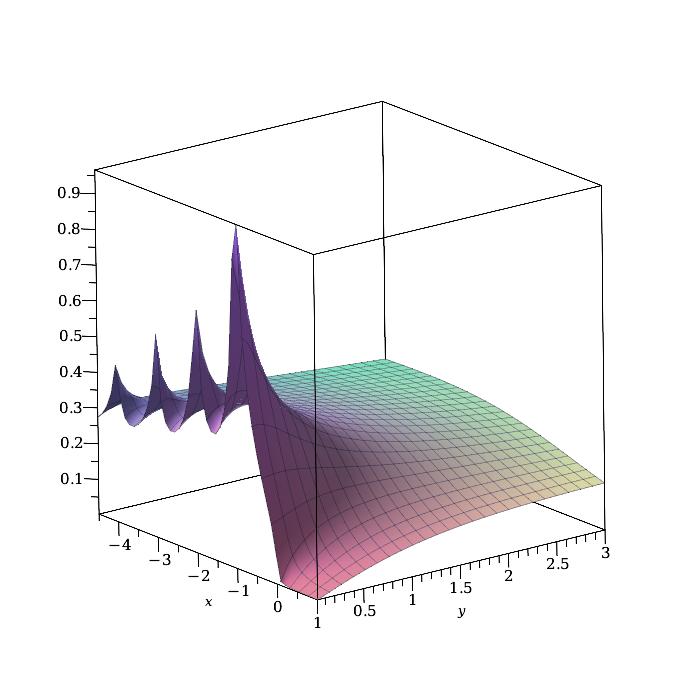}\\
 \includegraphics[scale=0.25]{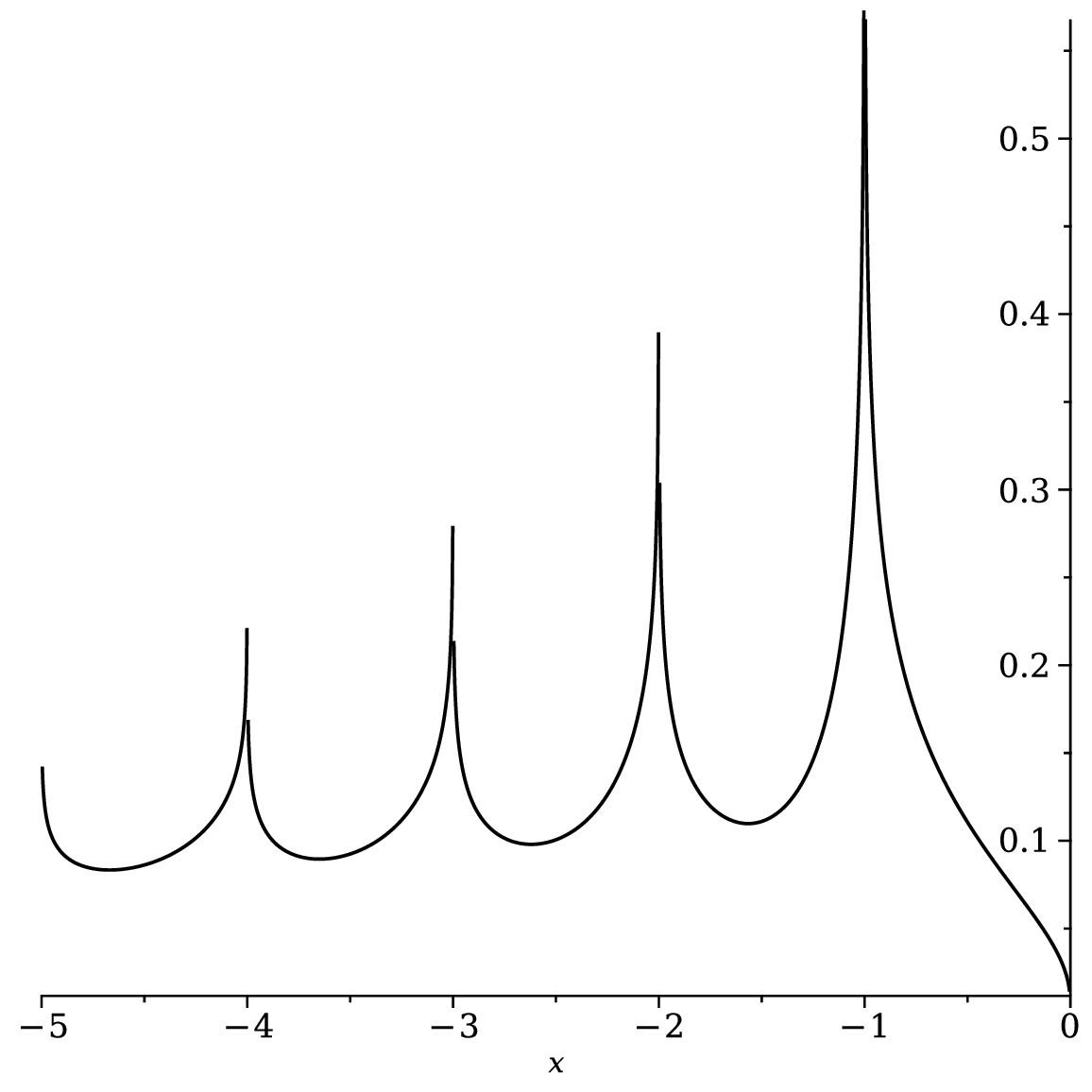}&
 \includegraphics[scale=0.25]{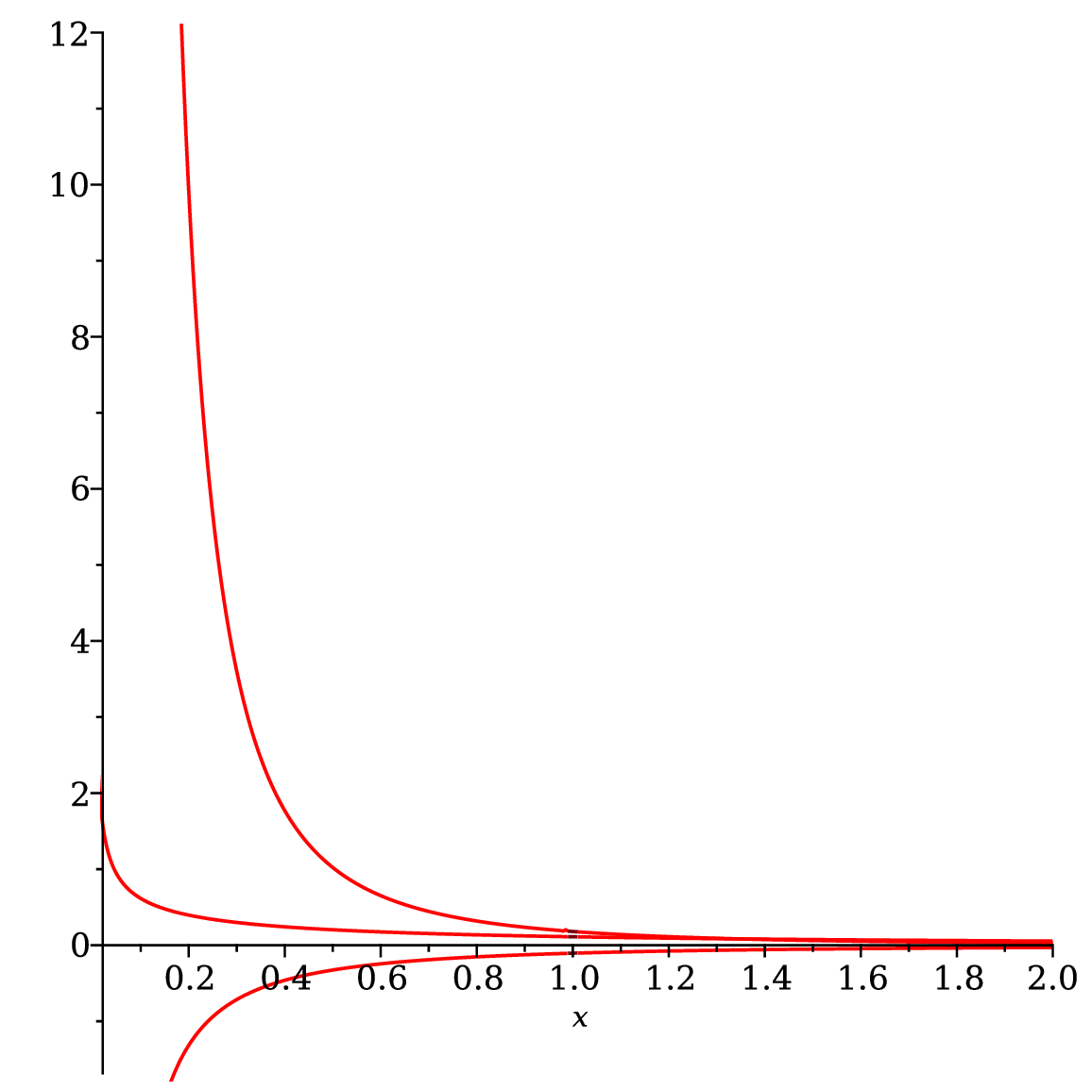}
 \end{tabular}
 \end{center}
 \caption{The function on the positive line; its imaginary part in the upper half-plane; the boundary behaviour on the negative axis; the derivatives on the positive axis.}
 \label{fig:fourplots}

 \end{figure}
 \begin{example} The function above is also closely related to the investigation of properties of the volume $\Omega_n$ of the unit ball in $\mathbb R^n$. This volume can be expressed via the Gamma function as follows
  $$
  \Omega_n=\frac{\pi^{\nicefrac{n}{2}}}{\Gamma(\nicefrac{n}{2}+1)}.
  $$
The asymptotic behaviour of $\Omega_n$ is easily determined using Stirling's formula and we obtain
$$
\Omega_n^{\nicefrac{1}{n\log n}}\to \sqrt{e}.
$$
  The questions arose if this sequence is decreasing or if it is logarithmically convex. The answers to these questions can be obtained from the representation
   $$
   \Omega_n^{\nicefrac{1}{n\log n}}=\int_0^1t^{n-2}\, d\mu(t),\quad n\geq 2,
   $$
   where $\mu$ is some positive measure on $[0,1]$ having moments of all orders. This can be expressed by saying that the sequence $\Omega_n^{\nicefrac{1}{n\log n}}$ is a Hausdorff moment sequence, and such sequences are known to be decreasing and logarithmically convex. The representation of $\Omega_n^{\nicefrac{1}{n\log n}}$ as a moment sequence is obtained using properties of a slightly modified version of the function $f$ above, since
   $\log \Omega_n^{\nicefrac{1}{n\log n}}=\nicefrac{\log \pi}{2\log n}-\nicefrac{\log \Gamma(\nicefrac{n}{2}+1)}{n\log n}$.

\end{example}

\subsection{Preliminaries from basic complex analysis}
In this section focus is on maximum principles for holomorphic and subharmonic functions.
  \begin{notation}{\quad }
   \begin{enumerate}
    \item By $\mathcal H(\Omega)$ we denote the class of holomorphic functions defined in the open subset $\Omega$ of $\mathbb C$.
    \item A domain is an open and connected subset of the complex plane.
    \item The open upper half-plane is denoted by $\mathbb H$.
   \end{enumerate}

  \end{notation}

 \begin{prop}
  [Local version of the ordinary maximum principle]
  If $\Omega$ is a domain in $\mathbb C$ and if $f\in \mathcal H(\Omega)$ has a local maximum in $\Omega$ then $f$ is constant.
 \end{prop}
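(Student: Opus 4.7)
The plan is to reduce the statement to the mean value property and then to the identity theorem. I will interpret "local maximum of $f$" in the standard complex-analytic sense, namely that there exist $a\in\Omega$ and $r_0>0$ with $\overline{D(a,r_0)}\subset\Omega$ such that $|f(z)|\le |f(a)|$ for all $z\in D(a,r_0)$.

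First I would invoke Cauchy's integral formula on circles centred at $a$: for every $0<r<r_0$,
\[
 f(a)=\frac{1}{2\pi}\int_0^{2\pi}f(a+re^{i\theta})\,d\theta.
\]
Taking moduli and inserting the pointwise bound $|f(a+re^{i\theta})|\le |f(a)|$ gives
\[
 |f(a)|\le \frac{1}{2\pi}\int_0^{2\pi}|f(a+re^{i\theta})|\,d\theta\le |f(a)|,
\]
so the middle term equals $|f(a)|$. Since the integrand is continuous and bounded above by $|f(a)|$, equality in the mean forces $|f(a+re^{i\theta})|=|f(a)|$ for every $\theta$. As this holds for every $r\in(0,r_0)$, the modulus $|f|$ is constant on the whole disc $D(a,r_0)$.

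Next I would upgrade "$|f|$ constant" to "$f$ constant" on that disc. If $|f(a)|=0$ this is trivial. Otherwise I would apply the Cauchy--Riemann equations to $u^2+v^2\equiv c$, differentiating to obtain the linear system $uu_x+vv_x=0$, $uu_y+vv_y=0$, and use $u_x=v_y$, $u_y=-v_x$ to conclude $u_x=u_y=v_x=v_y=0$ on $D(a,r_0)$. (Equivalently, one could invoke the open mapping theorem: a non-constant holomorphic map is open, so its image cannot lie on the circle $|w|=|f(a)|$.)

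Finally, to pass from the disc to all of $\Omega$, I would appeal to the identity theorem: $f$ agrees with the constant $f(a)$ on the open set $D(a,r_0)\subset\Omega$, hence on all of $\Omega$ since $\Omega$ is a domain (i.e.\ connected). The only mildly delicate point is the step from "$|f|$ constant" to "$f$ constant"; everything else is an immediate application of the mean value property and the identity theorem.
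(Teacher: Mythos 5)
Your proof is correct and standard. The paper actually states this proposition without giving a proof at all (it is offered as background), so there is no "paper's approach" to compare against; but the route you take --- the mean value property from Cauchy's formula, the squeeze giving $|f|$ constant on a disc, then upgrading to $f$ constant via Cauchy--Riemann (or the open mapping theorem), and finally the identity theorem to propagate across the connected domain --- is the canonical argument, and you have handled the one genuinely delicate step (passing from $|f|$ constant to $f$ constant) carefully and correctly. The only remark worth making is that you implicitly take "local maximum of $f$" to mean "local maximum of $|f|$"; that is indeed the standard reading here, since a holomorphic function is complex-valued and only its modulus carries an order, but it is worth stating explicitly, as you did.
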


 \begin{prop}
  [A maximum principle in a bounded domain] Let $\Omega$ be a bounded domain in $\mathbb C$. If $f\in \mathcal H(\Omega)$ has a continuous extension to $\overline{\Omega}$ then $|f(z)|\leq \max_{w\in \partial \Omega}|f(w)|$.
 \end{prop}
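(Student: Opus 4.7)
The plan is to reduce the statement to the local maximum principle stated immediately above, via a compactness argument. Because $\Omega$ is bounded, its closure $\overline{\Omega}$ is closed and bounded in $\mathbb{C}$, hence compact. The hypothesis that $f$ extends continuously to $\overline{\Omega}$ ensures that $|f|\colon\overline{\Omega}\to[0,\infty)$ is continuous, so it attains its supremum at some point $z_0\in\overline{\Omega}$ with value $M=|f(z_0)|$. The argument then splits according to whether $z_0$ lies on $\partial\Omega$ or in $\Omega$.

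In the first case, $z_0\in\partial\Omega$ immediately gives $M\leq\max_{w\in\partial\Omega}|f(w)|$, and since $M$ is by construction an upper bound for $|f|$ on all of $\overline{\Omega}$, the desired inequality follows. In the second case, $|f|$ attains a local maximum at the interior point $z_0$, and the local version of the maximum principle (interpreted for $|f|$) forces $f$ to be constant on the connected domain $\Omega$. The assumed continuity of $f$ on $\overline{\Omega}$ then propagates this constant value to $\partial\Omega$, so that $|f(z)|=|f(w)|$ for every $z\in\overline{\Omega}$ and every $w\in\partial\Omega$, and the inequality holds with equality.

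The only point requiring care is the interpretation of ``local maximum'' for a complex-valued holomorphic function, which must be read as a local maximum of $|f|$; once this standard convention is granted, the proof is essentially a routine dichotomy, and I do not anticipate any genuine obstacle.
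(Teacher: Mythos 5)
Your proof is correct and is exactly the reduction the paper's ordering invites: the local maximum principle is stated immediately before this proposition and no proof is given for either, so the implicit argument is the standard compactness dichotomy you carry out. One small observation worth making explicit is that $\partial\Omega\neq\emptyset$ — this is guaranteed precisely because $\Omega$ is bounded (a bounded open set cannot equal $\mathbb{C}$), so the maximum over the boundary is well defined.
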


 There are also maximum principles for (sub)harmonic functions. Maximum principles, sometimes also called Phragm\'en-Lindel\"of principles, in unbounded domains are a bit more delicate. (The function $f(z)=\sin z$ is bounded on $\mathbb R$ but not in the upper half-plane.)


\begin{defn}
 Let $\Omega$ be a domain. A function $u:\Omega\to [-\infty,\infty)$ is \emph{subharmonic} if
 \begin{enumerate}
  \item $u$ is upper semicontinuous: $u^{-1}([-\infty,a))$ is open for all $a\in \mathbb R$;
  \item $u$ satisfies a local submean inequality: for all $z\in \Omega$ there is $\rho>0$ such that for all $r\in [0,\rho)$,
  $$u(z)\leq \frac{1}{2\pi}\int_0^{2\pi}u(z+re^{it})\, dt.$$
 \end{enumerate}
\end{defn}

\begin{example}
 Take any $f\in \mathcal H(\Omega)$. Then $u(z)=\log |f(z)|$ is subharmonic.
\end{example}


 \begin{prop}[Maximum principle for subharmonic functions]
  Let $\Omega$ be a bounded domain in $\mathbb C$, and let $u$ be subharmonic in $\Omega$.
  \begin{enumerate}
   \item If $u$ attains a global maximum on $\Omega$ then $u$ is constant.
   \item If, for each $\zeta\in \partial \Omega$,
  $
  \limsup_{z\to \zeta,\ z\in \Omega}u(z)\leq 0
  $
  then $u\leq 0$ in $\Omega$.
  \end{enumerate}

 \end{prop}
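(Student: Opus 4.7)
The plan is to prove the two parts in order, using (1) to quickly reduce (2) to a compactness argument. Throughout I will write $M=\sup_{\Omega}u$ and $A=\{z\in\Omega:u(z)=M\}$.

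For part (1), assuming $A\neq\emptyset$, I would show that $A$ is both open and closed in $\Omega$, whence by connectedness $A=\Omega$. Closedness is immediate from upper semicontinuity: if $z_n\in A$ and $z_n\to z\in\Omega$, then $M=\limsup_n u(z_n)\leq u(z)\leq M$, so $z\in A$. For openness, fix $z_0\in A$ and choose $\rho>0$ as in the submean inequality, small enough that the closed disk $\overline{B(z_0,\rho)}$ lies in $\Omega$. For any $r\in[0,\rho)$, the hypothesis $u\leq M$ combined with
\[
M=u(z_0)\leq \frac{1}{2\pi}\int_0^{2\pi}u(z_0+re^{it})\,dt
\]
forces $u(z_0+re^{it})=M$ for almost every $t\in[0,2\pi]$. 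Consequently $A$ is dense in $B(z_0,\rho)$; since $A$ is closed in $\Omega$, we get $B(z_0,\rho)\subset A$, proving that $A$ is open.

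For part (2), I would argue by the compactness of $\overline{\Omega}$. Extend $u$ to $\overline{\Omega}$ by setting $\tilde u(\zeta)=\limsup_{z\to\zeta,\,z\in\Omega}u(z)$ for $\zeta\in\partial\Omega$; the hypothesis gives $\tilde u\leq 0$ on $\partial\Omega$, and it is straightforward to check that $\tilde u$ is upper semicontinuous on the compact set $\overline{\Omega}$. Hence $\tilde u$ attains its supremum $M^{*}$ at some point $z^{*}\in\overline{\Omega}$. If $z^{*}\in\partial\Omega$, then $M^{*}\leq 0$ and we are done. If $z^{*}\in\Omega$, then $u$ attains its global maximum at $z^{*}$, so by part (1) $u$ is constant on $\Omega$, equal to some $c$; then for any $\zeta\in\partial\Omega$, the hypothesis gives $c=\limsup_{z\to\zeta}u(z)\leq 0$, and again $u\leq 0$.

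The only delicate step is the openness argument in part (1): the submean inequality only provides information as an average, so I must pass from "equal to $M$ almost everywhere on each circle" to "equal to $M$ on an entire neighbourhood." This is resolved cleanly once closedness is in hand, since the almost-everywhere statement yields a dense subset of $A$ in $B(z_0,\rho)$ and closedness promotes density to containment. Everything else — the USC extension to the boundary and the dichotomy in part (2) — is standard once (1) is available.
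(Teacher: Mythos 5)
Your proof is correct and follows essentially the same route as the paper for both parts: the clopen/connectedness argument in (1), and the upper-semicontinuous extension to $\overline\Omega$ plus compactness in (2). The only genuine difference is that you fill in a detail the paper glosses over. The paper asserts that ``the mean value property forces $u\equiv M$ in a small neighbourhood of any point in $A$'' without spelling out how one passes from the averaged inequality to pointwise equality; your density-plus-closedness bootstrap is a clean way to do this (an equivalent alternative is to note that $\{u<M\}$ is open by upper semicontinuity, so if $u<M$ anywhere on a circle it is $<M$ on a set of positive angular measure, contradicting the a.e.\ equality). Also note that the paper's observation that $B=\{u<M\}$ is open is exactly your observation that $A=\{u=M\}$ is closed in $\Omega$, so the two formulations of the connectedness argument coincide.
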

\begin{proof}
(1): If $u$ attains its maximum value $M$ in $\Omega$ we look at
$$
A=\{z\in \Omega\, | \, u(z)=M\}, \quad  B=\{z\in \Omega\, | \, u(z)<M\}.
$$
The set $B$ is open since $u$ is upper semicontinuous. The set $A$ is also open since the mean value property forces $u$ to be $\equiv M$ in a small neighbourhood of any point in $A$. Since $A\neq \emptyset$, $A\cup B=\Omega$, $A\cap B=\emptyset$ and $\Omega$ is connected we find that $A=\Omega$

(2): Extend $u$ to the boundary of $\Omega$ by $u(\zeta)=\limsup_{z\to \zeta, z\in \Omega}u(z)$, for $\zeta\in \partial \Omega$. Then we have an upper semicontinuous function $u:\overline{\Omega}\to [-\infty,\infty)$. Such a function attains its maximum at some $z_0$ in $\overline{\Omega}$. If $z_0\in \partial \Omega$, $M\leq 0$. If $z_0\in \Omega$ then $u$ must be constant in $\Omega$ and is hence non-positive.
\end{proof}

 \begin{prop}[Maximum principle in unbounded domains]
  Let $\Omega\subsetneq $ be a domain in $\mathbb C$, and let $u:\Omega \to [-\infty,\infty)$ be subharmonic and bounded from above. If there is $m\in \mathbb R$ such that for each $\zeta\in \partial \Omega$,
  $$
  \limsup_{z\to \zeta,\ z\in \Omega}u(z)\leq m
  $$
  then $u\leq m$ in $\Omega$.
 \end{prop}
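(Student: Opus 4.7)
My plan is to apply the classical Phragm\'en--Lindel\"of device: to $u$ I would add a small negative multiple of an auxiliary harmonic function that forces strong decay at infinity, invoke the bounded maximum principle already established, and then let the multiplier tend to~$0$.

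Concretely, I first normalise to $m=0$ by subtracting $m$, and set $M=\sup_\Omega u<\infty$. Since $\Omega\subsetneq\mathbb C$ I pick $a\in\mathbb C\setminus\Omega$; then $\log|z-a|$ is harmonic on~$\Omega$. For each $\varepsilon>0$ I form
\[
v_\varepsilon(z)=u(z)-\varepsilon\log|z-a|,
\]
which is subharmonic on $\Omega$ as the sum of a subharmonic function and a harmonic one. The bound $u\le M$ together with $|z-a|\ge|z|-|a|$ shows that $v_\varepsilon(z)\to-\infty$ as $|z|\to\infty$ inside $\Omega$.

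Next I apply the bounded maximum principle on $\Omega_R:=\Omega\cap B(0,R)$ for large~$R$. The decay at infinity ensures that on the circular part $\partial B(0,R)\cap\Omega$ the quantity $v_\varepsilon$ is as negative as I wish once $R$ is large. On the finite-boundary part $\partial\Omega\cap B(0,R)$ the hypothesis gives, for $\zeta\ne a$,
\[
\limsup_{z\to\zeta,\,z\in\Omega}v_\varepsilon(z)\le -\varepsilon\log|\zeta-a|.
\]
In the clean situation where $a$ can be chosen in the interior of $\mathbb C\setminus\Omega$, setting $d:=\mathrm{dist}(a,\partial\Omega)>0$, the right-hand side is $\le\varepsilon\log(1/d)$. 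The bounded maximum principle then yields $v_\varepsilon\le\varepsilon\log(1/d)$ throughout $\Omega_R$, that is $u(z)\le\varepsilon\log(|z-a|/d)$; letting first $R\to\infty$ and then $\varepsilon\to 0^+$ produces $u\le 0$, as required.

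The main obstacle is the case in which $\mathbb C\setminus\Omega$ has empty interior (for example $\mathbb C\setminus\{0\}$, or $\mathbb C$ slit along a segment). Then any admissible $a$ lies in $\partial\Omega$ and $-\varepsilon\log|z-a|$ blows up as $z\to a$ from inside~$\Omega$, spoiling the boundary estimate near~$a$. I would treat the offending piece of the complement as a polar set across which the bounded-above subharmonic function $u$ admits a subharmonic extension to a larger domain, thereby reducing to the case already handled, or in the extreme to the Liouville statement that a subharmonic function bounded above on all of $\mathbb C$ must be constant; for non-polar complements such as a slit, one can instead pre-compose with a suitable conformal map onto a domain whose complement does have interior, and run the previous argument there.
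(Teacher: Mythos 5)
Your argument is correct when you can place the auxiliary singularity $a$ in the \emph{interior} of $\mathbb C\setminus\Omega$, and you correctly identify the real difficulty: when the complement has empty interior, any choice of $a$ lies on $\partial\Omega$ and $-\varepsilon\log|z-a|\to+\infty$ as $z\to a$ from inside $\Omega$, destroying the boundary estimate. However, your proposed repair (subharmonic extension across a polar set, or pre-composition with a conformal map) is only sketched and does not close the gap. The complement need not be polar (a slit is the standard example, as you note), and the conformal-map route is not carried out: you would need to control both the boundary behaviour of $u\circ\varphi^{-1}$ and the growth at $\infty$ of the transplanted function, neither of which is addressed. So as written this is an incomplete proof.

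The idea you are missing is simpler than either of your fallbacks and handles all cases at once, and it is exactly what the paper does: place the singularity \emph{deliberately} at a boundary point, say $0\in\partial\Omega$, and then use the boundary hypothesis itself to neutralize the blow-up. Fix $\eta>0$. Because $\limsup_{z\to 0,\,z\in\Omega}u(z)\le 0$, there is $\rho>0$ with $u<\eta$ on $\Omega\cap B(0,\rho)$. Now work on $\Omega_\rho=\Omega\setminus\overline{B(0,\rho)}$: there the extra term $-\varepsilon\log|z|$ is bounded above by $\varepsilon\log(1/\rho)\le\eta$ once $\varepsilon$ is small, so $v_\varepsilon=u-\varepsilon\log|z|$ has $\limsup\le 2\eta$ on all of $\partial\Omega_\rho$ (the old boundary pieces contribute $\le \eta$ by hypothesis, the new circular piece $\Omega\cap\partial B(0,\rho)$ contributes $\le 2\eta$ because $u<\eta$ there). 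Together with $v_\varepsilon\le M-\varepsilon\log R\le 0$ on $\Omega_\rho\cap\partial B(0,R)$ for $R$ large, the bounded maximum principle gives $v_\varepsilon\le 2\eta$ on $\Omega_\rho\cap B(0,R)$; letting $R\to\infty$, then $\varepsilon\to 0^+$, then $\eta\to 0^+$ finishes. Note this uses only the ordinary bounded maximum principle plus the boundary hypothesis at the chosen point, no potential theory or conformal mapping, and it works uniformly whether or not $\mathbb C\setminus\Omega$ has interior.

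Two smaller remarks on your write-up: the decay of $v_\varepsilon$ at infinity comes from the hypothesis that $u$ is bounded above, which you should invoke explicitly (your appeal to $|z-a|\ge|z|-|a|$ only controls the logarithm, not $u$); and in the ``clean'' case your final inequality $u(z)\le\varepsilon\log(|z-a|/d)$ should be stated for $z$ in the bounded piece $\Omega_R$ before you let $R\to\infty$, otherwise the application of the bounded maximum principle is not justified.
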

 \begin{proof}
  We may assume that $m=0$ and $0\in \partial \Omega$. Let $\eta>0$ be fixed. There must exist $\rho>0$ such that $u(z)<\eta$ for all $z\in \Omega\cap B(0,\rho)$ (otherwise, $\limsup_{z\in \Omega, z\to 0} u(z)\geq \eta$, contrary to our assumption).

  Define $\Omega_{\rho}=\Omega\setminus \overline{B(0,\rho)}$ and notice that $\partial \Omega_{\rho}\subseteq \partial \Omega\cup(\Omega\cap \partial B(0,\rho))$. Furthermore, we have
  $$
  \limsup_{z\in \Omega_{\rho}, z\to\zeta}u(z)\leq \eta,\quad \text{for all}\ \zeta \in \partial \Omega_{\rho}.
  $$
  Let $\epsilon >0$ and define $v_{\epsilon}(z)=u(z)-\epsilon \log |z|$. We have
  $$
  v_{\epsilon}(z)\leq u(z)+\epsilon \log (1/\rho)\leq u(z)+\eta
  $$
  for $z\in \Omega_{\rho}$,
  when $\epsilon$ is sufficiently small. Furthermore,
  $$
  \limsup_{z\in \Omega_{\rho}, z\to\zeta}v_{\epsilon}(z)\leq 2\eta
  $$
  for $\zeta\in \partial \Omega_{\rho}$.

  Let now $z_0\in\Omega_{\rho}$. We show that $u(z_0)\leq 2\eta$: Since $u$ is bounded form above, say by $M$, we have
  $$
  v_{\epsilon}(z)\leq M -\epsilon\log|z|=M -\epsilon\log R\leq 0,\quad z\in \Omega_{\rho}\cap \partial B(0,R)
  $$
  for $R$ large enough. Define now $\Omega_{\rho,R}=\Omega_{\rho}\cap B(0,R)$ and observe that
  $$
  \limsup_{z\in \Omega_{\rho,R}, z\to\zeta}v_{\epsilon}(z)\leq 2\eta \quad \text{for}\ \zeta\in \partial \Omega_{\rho, R}
  $$
  The next step is to use the ordinary maximum principle on (the components of) the open and bounded set $\Omega_{\rho,R}$ to obtain $v_{\epsilon}\leq 2\eta$ in $\Omega_{\rho,R}$. In particular, $u(z_0)\leq 2\eta+\epsilon\log [z_0|$. Finally, let $\epsilon$ tend to zero.
\end{proof}



 \begin{prop}[Growth times opening theorem]
  Let $0<\gamma<\pi$ and $u$ be subharmonic in $S=\{z\in \mathbb C\, |\, |\arg z|\leq \gamma\}$, ($\arg$ denotes the principal argument) and suppose that
  $$
  u(z)\leq C+A|z|^{\alpha}, \quad z\in S,
  $$
  where $0\leq \alpha 2\gamma <\pi$.

  If, for all $\zeta\in \partial S$, $\limsup_{z\to \zeta,z\in S}u(z)\leq 0$ then $u\leq 0$ in $S$.
 \end{prop}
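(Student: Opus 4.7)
My plan is to use a standard Phragmén--Lindelöf argument: construct a comparison function that is strictly positive on the sector and grows faster than $|z|^\alpha$, subtract a small multiple from $u$, apply the preceding unbounded-domain maximum principle, and let the multiplier tend to zero.

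First, the hypothesis $2\alpha\gamma<\pi$ permits a choice of $\beta$ with $\alpha<\beta<\pi/(2\gamma)$, so that $\beta\gamma<\pi/2$ and in particular $c:=\cos(\beta\gamma)>0$. Because $S$ lies inside the slit plane $\mathbb C\setminus(-\infty,0]$, the principal branch of $z^\beta$ is holomorphic on the interior of $S$ and extends continuously to the boundary (with value $0$ at the origin). Set $h(z):=\Re z^{\beta}$; this is harmonic on $\mathrm{int}\,S$, and for $z=re^{i\theta}\in S$ one has
$$
h(z)=r^{\beta}\cos(\beta\theta)\geq c\,r^{\beta}\geq 0.
$$

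Now fix $\epsilon>0$ and set $u_\epsilon:=u-\epsilon h$. Since adding minus a harmonic function preserves subharmonicity, $u_\epsilon$ is subharmonic in $\mathrm{int}\,S$. The quantitative point is that $u_\epsilon$ is bounded from above on $S$: combining the growth hypothesis on $u$ with the lower bound on $h$ yields
$$
u_\epsilon(z)\leq C+A|z|^{\alpha}-\epsilon c\,|z|^{\beta},
$$
and since $\beta>\alpha$, the right hand side attains a finite supremum over $|z|\geq 0$. On the two rays $\arg z=\pm\gamma$ we have $h\geq 0$, so $\limsup_{z\to\zeta,\,z\in\mathrm{int}\,S}u_\epsilon(z)\leq\limsup_{z\to\zeta,\,z\in S}u(z)\leq 0$ for every $\zeta\in\partial S\setminus\{0\}$; the same bound holds at $\zeta=0$ because $h(0)=0$.

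The Maximum principle in unbounded domains (with $m=0$) then applies to $u_\epsilon$ on the domain $\mathrm{int}\,S$, yielding $u_\epsilon\leq 0$ throughout $\mathrm{int}\,S$. Fixing $z$ and letting $\epsilon\downarrow 0$ gives $u(z)\leq 0$ in $\mathrm{int}\,S$, while on $\partial S$ the conclusion follows from the $\limsup$ hypothesis. The only real obstacle is selecting the comparison function; everything else is bookkeeping. The strict inequality $2\alpha\gamma<\pi$ is precisely what guarantees the existence of a $\beta$ strictly between $\alpha$ and $\pi/(2\gamma)$, which is indispensable: one cannot simultaneously make $\Re z^{\beta}$ positive on all of $S$ and dominate the $|z|^{\alpha}$-growth of $u$ at the borderline $2\alpha\gamma=\pi$, and indeed the result fails there (witness $u(z)=\Re z^{\pi/(2\gamma)}$).
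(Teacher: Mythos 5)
The paper states this Phragm\'en--Lindel\"of result without proof, so there is nothing to compare against directly; on its own merits your argument is correct and is the standard comparison-function proof. You pick $\beta$ strictly between $\alpha$ and $\pi/(2\gamma)$ so that $h(z)=\Re z^{\beta}$ is harmonic and bounded below by $c|z|^{\beta}$ with $c=\cos(\beta\gamma)>0$ throughout the sector; subtracting $\epsilon h$ produces a subharmonic function that is bounded above (because $\beta>\alpha$) and still has non-positive boundary $\limsup$, so the paper's Maximum principle in unbounded domains applies and letting $\epsilon\downarrow 0$ finishes. This is a natural way to bootstrap from the unbounded-domain maximum principle that the paper has just proved, and your observation that the conclusion fails at the borderline $2\alpha\gamma=\pi$ (witness $u(z)=\Re z^{\pi/(2\gamma)}$) correctly explains why the strict inequality is essential.

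One small point worth being explicit about: the set $S$ as written is closed, whereas subharmonicity is a notion on open sets and the unbounded-domain maximum principle is stated for a domain $\Omega$. You quietly (and correctly) work on $\mathrm{int}\,S$, verifying the $\limsup$ hypothesis at each point of $\partial(\mathrm{int}\,S)$ including the origin (where $h\to 0$); the conclusion $u\leq 0$ on the open sector is what is really meant. If the closed-sector conclusion is wanted one also needs upper semicontinuity of $u$ up to $\partial S$, which then gives $u(\zeta)\leq\limsup_{z\to\zeta}u(z)\leq 0$ on the two rays. This is bookkeeping, not a gap.
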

 The name of the result refers to the fact that the opening of the sector $S$ is $2\gamma$.
\begin{prop}[A Phragm\'en-Lindel\"of theorem in the upper half-plane]
  Let $u$ be a subharmonic function in $\mathbb H$ such that
  \begin{enumerate}
  \item $u(z)\leq A|z|+o(|z|)$ for large $|z|$, and
  \item for each $x\in \mathbb R$ we have
  $$
  \limsup_{z\to x,z\in \mathbb H}u(z)\leq 0.
  $$
  \end{enumerate}
 Then $u(z)\leq A\Im z$ for $z\in \mathbb H$.
 \end{prop}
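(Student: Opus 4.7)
Set $v(z):=u(z)-A\,\Im z$. Since $\Im z$ is harmonic, $v$ is subharmonic in $\mathbb{H}$; hypothesis (2) gives $\limsup_{z\to x,\,z\in\mathbb{H}}v(z)\le 0$ for every $x\in\mathbb{R}$; and (1) implies $v(z)\le A(|z|-\Im z)+o(|z|)$, which is $O(|z|)$ but not bounded above in $\mathbb{H}$. The central obstacle is that the unbounded-domain maximum principle of the preceding proposition therefore cannot be applied to $v$ directly: the growth bound vanishes only along the positive imaginary axis and is linear in $|z|$ in all other directions.

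The plan is to fix $z_0\in\mathbb{H}$ and compare $v$ with its harmonic majorant on the half-disk $\Omega_R:=\mathbb{H}\cap\{|z|<R\}$, then let $R\to\infty$. The bounded-domain maximum principle for subharmonic functions gives
\[v(z_0)\le\int_{\partial\Omega_R}v(\zeta)\,d\omega_{z_0}(\zeta),\]
where $\omega_{z_0}$ is harmonic measure in $\Omega_R$ from $z_0$. The diameter $[-R,R]$ contributes non-positively by (2); on the semicircle $\Gamma_R=\{Re^{i\theta}:0<\theta<\pi\}$, (1) gives $v(Re^{i\theta})\le AR(1-\sin\theta)+o(R)$. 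Using the conformal map $\psi(z)=-\tfrac12(z/R+R/z)$ from $\Omega_R$ onto $\mathbb{H}$ (which sends $\Gamma_R$ onto $(-1,1)$ and the diameter onto $\mathbb{R}\setminus[-1,1]$), a direct Poisson-kernel computation shows that the density of $\omega_{z_0}$ restricted to $\Gamma_R$ is $\tfrac{2\Im z_0\sin\theta}{\pi R}+o(R^{-1})$ for large $R$.

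The hard part is that substituting the crude bound $AR(1-\sin\theta)$ into the integral yields only $v(z_0)\le\tfrac{A(4-\pi)}{\pi}\,\Im z_0+o(1)$, a positive residual. To remove it I would invoke the Phragm\'en--Lindel\"of indicator method. The indicator $h_u(\theta):=\limsup_{r\to\infty}u(re^{i\theta})/r$ is trigonometrically $1$-convex on $[0,\pi]$; hypothesis (2) forces $h_u(0),h_u(\pi)\le 0$, and specialising (1) to the positive imaginary axis gives $h_u(\pi/2)\le A$. Trigonometric interpolation on $[0,\pi/2]$ and on $[\pi/2,\pi]$ (valid because each sub-interval has length $<\pi$) then yields $h_u(\theta)\le A\sin\theta$ throughout $[0,\pi]$, hence $h_v(\theta)\le 0$ for every $\theta$. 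Upgrading this to uniform convergence on compact subsets of $(0,\pi)$---which is standard for indicators of subharmonic functions of order $1$---gives $v(Re^{i\theta})\le\delta R$ on $[\alpha,\pi-\alpha]$ for any $\delta,\alpha>0$ provided $R$ is sufficiently large, while on the arcs $[0,\alpha]\cup[\pi-\alpha,\pi]$ the crude bound from (1) is still used and contributes to the harmonic-measure integral an amount of order $O(1-\cos\alpha)$.

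Combining these bounds in the harmonic-measure integral and letting $R\to\infty$, then $\delta\to 0$, and finally $\alpha\to 0$ eliminates the residual, so that $v(z_0)\le 0$; since $z_0\in\mathbb{H}$ was arbitrary, this completes the argument. The one delicate point is the uniform version of the indicator estimate, which is the only place where the plan moves beyond the maximum principles collected earlier in the lecture; once that is granted, the harmonic-measure computation is routine.
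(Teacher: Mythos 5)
The paper states this Phragm\'en--Lindel\"of theorem without proof, so I will assess your argument on its own merits.

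Your opening steps are sound: subtracting the harmonic function $A\Im z$ from $u$ is the right reduction, the harmonic measure of the semicircle from $z_0$ does have density $\tfrac{2\Im z_0\sin\theta}{\pi R}+o(R^{-1})$, and you correctly compute that the crude bound $v(Re^{i\theta})\le AR(1-\sin\theta)+o(R)$ on $\Gamma_R$ leaves the residual $\tfrac{A(4-\pi)}{\pi}\Im z_0$, i.e.\ only $u(z_0)\le\tfrac{4A}{\pi}\Im z_0$. Recognizing that one must then sharpen the estimate on the semicircle is the right instinct.

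The step where you close the gap, however, has genuine holes. First, the assertion that hypothesis~(2) ``forces $h_u(0),h_u(\pi)\le 0$'' does not hold at face value: $u$ is not defined on $\mathbb R$, so $h_u(0)$ must be read as a one-sided limit $\limsup_{\theta\downarrow 0}h_u(\theta)$, and there is no direct passage from the \emph{local} boundary condition $\limsup_{z\to x}u(z)\le 0$ at finite $x$ to a statement about $u(re^{i\theta})/r$ as $r\to\infty$ with $\theta\to 0$. Establishing $h_u(0+)\le 0$ is itself a Phragm\'en--Lindel\"of statement in a thin sector --- morally a special case of the proposition being proved --- so invoking it without proof leaves the argument incomplete (and arguably circular, since trigonometric $1$-convexity of the indicator is proved by exactly this kind of argument). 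Second, the ``uniform indicator estimate'' $v(Re^{i\theta})\le\delta R$ on compact angular subsets is a nontrivial theorem from the theory of functions of finite order, usually stated for functions in the whole plane; its applicability to a general subharmonic function defined only on $\mathbb H$ requires justification that you do not supply. As written, the key inequality $h_u(\theta)\le A\sin\theta$ rests on two undischarged assumptions of comparable depth to the theorem itself.

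A cleaner way to finish from exactly the half-disk setup you already have is to \emph{bootstrap}: the first pass gives $u(z)\le B_1\Im z$ everywhere with $B_1=\tfrac{4A}{\pi}$. Feeding this back into the semicircle bound --- now $u(Re^{i\theta})\le\min\bigl((A+\eta)R,\,B_1R\sin\theta\bigr)$ --- and integrating against the harmonic measure density gives $u(z_0)\le\phi_\eta(B_1)\Im z_0$ with
\[
\phi_\eta(B)=\frac{4}{\pi}\int_0^{\pi/2}\min\bigl(A+\eta,\,B\sin\theta\bigr)\sin\theta\,d\theta .
\]
One checks $\phi_\eta(B)<B$ for $B>A+\eta$ and $\phi_\eta(A+\eta)=A+\eta$, so $\phi_\eta$ is increasing and has $A+\eta$ as its unique fixed point above $A+\eta$; iterating drives the constant down to $A+\eta$, and then $\eta\to 0$ gives $u(z_0)\le A\Im z_0$. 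This closes the argument using only the bounded-domain maximum principle and elementary estimates, without invoking indicator theory.
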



\subsection{Nevanlinna--Pick functions}

We denote by $M^+(X)$ the set of positive Radon measures on the locally compact Hausdorff space $X$.

\begin{defn}[Vague topology]
 Let $X$ be a locally compact Hausdorff space.
 The vague topology is the coarsest topology on $M^+(X)$ for which the mappings $\mu\mapsto \int_Xf\, d\mu$ is continuous for all $f\in C_c(X)$. Here $C_c(X)$ denotes the continuous functions on $X$ of compact support.
\end{defn}
\begin{rem}
 {Remark}
 When $X$ is compact the vague topology is the weak-star topology.
\end{rem}
\begin{prop}
 For any $a>0$ the set
 $$
 \{ \mu\in M^+(X)\, |\, \mu(X)\leq a\}
 $$
 is vaguely compact.

 When $X$ is compact, $\{ \mu\in M^+(X)\, |\, \mu(X)= a\}$ is weak-star compact.
\end{prop}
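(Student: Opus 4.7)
My plan is to mimic the standard Banach--Alaoglu argument by realising $K_a := \{\mu \in M^+(X) : \mu(X) \leq a\}$ as a closed subset of a compact Tychonoff product. For $\mu \in K_a$ and $f \in C_c(X)$ the estimate $|\int f\, d\mu| \leq a\|f\|_\infty$ shows that the evaluation map
$$
\Phi : K_a \to P := \prod_{f \in C_c(X)}[-a\|f\|_\infty,\, a\|f\|_\infty], \qquad \Phi(\mu) = \Bigl(\int f\, d\mu\Bigr)_{f \in C_c(X)},
$$
is well-defined, and $P$ is compact by Tychonoff's theorem. Because the vague topology is by construction the initial topology generated by the family $\{\mu \mapsto \int f\, d\mu\}_{f \in C_c(X)}$, the map $\Phi$ is a homeomorphism onto its image; injectivity comes from the uniqueness part of the Riesz--Markov--Kakutani theorem.

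The next step is to show $\Phi(K_a)$ is closed in $P$. Writing a generic point of $P$ as $L=(L_f)_f$ and regarding it as a function $f\mapsto L_f$ on $C_c(X)$, I would show that $\Phi(K_a)$ consists exactly of those $L\in P$ which are (i) $\mathbb R$-linear and (ii) positive, in the sense $L_f\geq 0$ whenever $f\geq 0$. The forward inclusion is immediate. For the reverse, Riesz--Markov--Kakutani produces $\mu\in M^+(X)$ with $L_f=\int f\, d\mu$ for all $f\in C_c(X)$; combining the inner-regularity identity $\mu(X)=\sup\{\int f\, d\mu : f\in C_c(X),\, 0\leq f\leq 1\}$ (which follows from regularity of Radon measures plus Urysohn) with the bound $L_f\leq a\|f\|_\infty\leq a$ built into membership of $P$ gives $\mu(X)\leq a$. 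Both conditions (i) and (ii) are cut out by continuity of finitely many coordinate projections and so define closed subsets of $P$. Hence $\Phi(K_a)$ is a closed subset of the compact space $P$, and transporting back through $\Phi^{-1}$ gives vague compactness of $K_a$.

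For the second statement, compactness of $X$ forces the constant function $\mathbf 1$ to belong to $C_c(X)=C(X)$, so $\mu\mapsto \mu(X)=\int \mathbf 1\, d\mu$ is vaguely continuous; by the preceding remark the vague topology coincides with the weak-$*$ topology in this setting. Consequently $\{\mu\in M^+(X) : \mu(X)=a\}$ is the preimage of the closed set $\{a\}$ and is a closed subset of the compact set $K_a$, hence weak-$*$ compact.

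The main obstacle in this plan is the appeal to Riesz--Markov--Kakutani: it has to be imported as a black box, both for injectivity of $\Phi$ and for the surjectivity step that produces a representing measure from a positive linear functional. Once that identification is granted, the remainder is purely topological, since linearity and positivity translate into finitely many closed conditions on the coordinates of $P$, and Tychonoff's theorem does the rest.
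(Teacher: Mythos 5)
The paper states this proposition without proof, so there is nothing to compare against; your argument is the standard Banach--Alaoglu/Tychonoff one and is essentially complete. You correctly embed $K_a$ into the compact product $P$ via integration against $C_c(X)$, use the fact that the vague topology is the initial topology for these maps (so $\Phi$ is a homeomorphism onto its image), invoke Riesz--Markov--Kakutani both for injectivity and to recover a representing Radon measure from a positive linear $L\in P$, and use $\mu(X)=\sup\{\int f\,d\mu : f\in C_c(X),\ 0\le f\le 1\}$ together with the a priori bound $L_f\le a$ to get $\mu(X)\le a$; the second part is handled correctly by observing that $\mathbf 1\in C(X)=C_c(X)$ when $X$ is compact, so $\mu\mapsto\mu(X)$ is continuous and $\{\mu : \mu(X)=a\}$ is a closed subset of the compact set $K_a$. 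One small point of phrasing: linearity and positivity are not cut out by ``finitely many'' projections overall, but rather by an infinite family of conditions, one for each tuple of test functions; the point is that each single such condition involves only finitely many coordinates and so defines a closed subset of $P$, and $\Phi(K_a)$ is the intersection of this (infinite) family of closed sets, hence closed. With that clarification the proof is sound.
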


\begin{defn}[Nevanlinna--Pick function]
A function $f\in \mathcal H(\mathbb H)$ is a Nevanlinna--Pick function if  $f(\mathbb H)\subseteq \overline{\mathbb  H}$. The class of Nevanlinna--Pick functions is denoted by $\mathcal N$.
\end{defn}

\begin{example}{The following functions are Nevanlinna--Pick functions.}
\begin{enumerate}
 \item $f(z)=az+b$, where  $a\geq 0$ and $b\in \mathbb R$,
 \item $f(z)=i$,
 \item $f(z)=\log z$, where $\log$ denotes the principal logarithm,
 \item $f(z)=\nicefrac{az+b}{cz+d}$, where $a,b,c,d\in \mathbb R$ and $ad-bc>0$.
 \end{enumerate}

\end{example}


 We proceed to show that any Nevanlinna--Pick function admits an integral representation. We first give a similar result for the so-called {Herglotz functions} in the unit disk and then use conformal mapping to transform the unit disk to the upper half-plane.
 \begin{defn}[Herglotz function]
  A function $f\in \mathcal H(B(0,1))$ is a Herglotz function if $\Re f(z)\geq 0$ for $z\in B(0,1)$.
 \end{defn}
\begin{prop}
 [Integral representation of Herglotz functions]
 The formula
 $$
 f(z)=ib+\int_{\partial B(0,1)}\frac{s+z}{s-z}\, d\mu(s),\ z\in B(0,1)
 $$
 gives a one-to-one correspondence between the class of Herglotz functions and pairs $(b,\mu)\in \mathbb R\times M^+(\partial B(0,1))$. Furthermore ($m$ denoting the Haar-measure on the unit circle),
 $$
 b=\Im f(0),\quad \mu=\lim_{r\to 1-}\Re f(rs)dm(s) \ \text{weak-star.}
 $$
\end{prop}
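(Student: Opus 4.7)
The plan is to reduce to the classical Schwarz formula applied on each disk of radius $r<1$, and then let $r\to 1^-$ using vague compactness of mass-bounded sets of positive measures on the compact circle $\partial B(0,1)$.

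First I would fix $r\in(0,1)$ and apply Schwarz's formula to $f_r(w):=f(rw)$, which is holomorphic on a neighborhood of $\overline{B(0,1)}$:
$$
f_r(z)=i\,\Im f_r(0)+\int_{\partial B(0,1)}\frac{s+z}{s-z}\,\Re f_r(s)\,dm(s),\quad |z|<1.
$$
Since $\Im f_r(0)=\Im f(0)$, this reads, with $d\mu_r(s):=\Re f(rs)\,dm(s)$,
$$
f(rz)=i\,\Im f(0)+\int_{\partial B(0,1)}\frac{s+z}{s-z}\,d\mu_r(s),\quad |z|<1.
$$
Positivity of $\Re f$ makes each $\mu_r$ an element of $M^+(\partial B(0,1))$, and the mean value property applied to the harmonic function $\Re f$ gives $\mu_r(\partial B(0,1))=\Re f(0)$ for every $r$. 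Hence $\{\mu_r\}_{0<r<1}$ lies in a vaguely compact subset of $M^+(\partial B(0,1))$ by the compactness proposition in the previous subsection.

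Next I would extract a sequence $r_n\to 1^-$ along which $\mu_{r_n}\to\mu$ weak-star. For fixed $z\in B(0,1)$ the kernel $s\mapsto(s+z)/(s-z)$ is continuous on the compact set $\partial B(0,1)$, so weak-star convergence justifies passing to the limit inside the integral. Combined with $f(r_n z)\to f(z)$, this yields the claimed representation with $b=\Im f(0)$. Surjectivity is immediate: any pair $(b,\mu)$ inserted in the right-hand side defines an $f\in \mathcal H(B(0,1))$ whose real part is the Poisson integral of the non-negative measure $\mu$, hence $\Re f\geq 0$.

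For uniqueness — and the identification of the boundary limit — I would take real parts of the representation to obtain $\Re f(z)=\int P_z(s)\,d\mu(s)$ with the Poisson kernel $P_z(s)=(1-|z|^2)/|s-z|^2$. A standard computation (convolution on the circle) shows that the measures $\Re f(rs)\,dm(s)$ are the Poisson images of $\mu$, and that for any $g\in C(\partial B(0,1))$ one has $\int g(s)\Re f(rs)\,dm(s)\to \int g\,d\mu$ as $r\to 1^-$, because the Poisson extension of $g$ is continuous up to the boundary. This simultaneously forces $\mu$ to be uniquely determined by $f$ and promotes the subsequential weak-star limit to a genuine limit along the whole net. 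Finally, $z=0$ in the representation gives $b=\Im f(0)$, fixing $b$.

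The principal difficulty will be the last paragraph: the vague compactness argument only delivers a subsequential limit, and it is the Poisson-recovery / uniqueness step that carries the analytical weight needed to upgrade this to the full weak-star limit claimed in the proposition.
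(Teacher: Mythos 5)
Your argument follows essentially the same route as the paper: represent $f(rz)$ via the Schwarz kernel against the measure $\Re f(rs)\,dm(s)$, observe that these measures all have mass $\Re f(0)$, and then pass to the limit along a weak-star convergent subsequence extracted by compactness. The only stylistic difference is that you invoke the Schwarz integral formula directly while the paper re-derives it via Fourier series; you also supply the surjectivity and uniqueness/full-limit arguments that the paper's proof omits, which is a welcome completion of the ``one-to-one correspondence'' claim.
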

\begin{proof}
 Let $f$ be a Herglotz function and write it as
 $$
 f(z)=\sum_{n=0}^{\infty}a_nz^n, \ z\in B(0,1).
 $$
 Let $\alpha=\Re a_0$ and $\beta=\Im a_0$, and consider $g(z)=\Re f(z)$.
 We have
 $$
 g(re^{i\theta})=\alpha+\frac{1}{2}\sum_{n=1}^{\infty}\left(a_nr^ne^{in\theta}+\overline{a_n}r^ne^{-in\theta}\right).
 $$
 This is a Fourier series and we obtain
 $$
 \alpha=\frac{1}{2\pi}\int_0^{2\pi}g(re^{i\theta})\, d\theta,\quad \frac{1}{2}a_nr^n=\frac{1}{2\pi}\int_0^{2\pi}g(re^{i\theta})e^{-in\theta}\, d\theta
 $$
 for $n\geq 1$. For $0<r<1$ and $z\in B(0,1)$ we have
 \begin{align*}
 f(rz)&=\sum_{n=0}^{\infty}a_nr^nz^n\\
 &=\alpha+i\beta+\sum_{n=1}^{\infty}\frac{2}{2\pi}\int_0^{2\pi}g(re^{i\theta})e^{-in\theta}\, d\theta \, z^n\\
 &=i\beta+\frac{1}{2\pi}\int_0^{2\pi}g(re^{i\theta})\left(1+2\sum_{n=1}^{\infty}z^ne^{-in\theta}\right)\, d\theta\\
 &=i\beta+\frac{1}{2\pi}\int_0^{2\pi}g(re^{i\theta})\frac{e^{i\theta}+z}{e^{i\theta}-z}\, d\theta\\
 &=i\beta+\int_{\mathbb T}g(rs)\frac{s+z}{s-z}\, dm(s).
 \end{align*}
Now, $\{g(rs)dm(s)\}_{0<r<1}$ is a family of positive measures on $\mathbb T$ all having the same mass $\alpha$. By weak-star compactness, there is a positive measure $\mu$ on $\mathbb T$ such that $\mu=\lim_{n}g(r_ns)dm(s)$ weak-star for some sequence $r_n$ tending to $1$. Hence,
$$
f(z)=\lim_{n\to \infty}f(r_nz)=i\beta+\lim_{n\to \infty}\int_{\mathbb T}g(r_ns)\frac{s+z}{s-z}\, dm(s)=i\beta+\int_{\mathbb T}\frac{s+z}{s-z}\, d\mu(s).
$$
Thus, we have found an integral representation of $f$.
\end{proof}

 \begin{prop}[Integral representation of Nevanlinna--Pick functions]
There is a one-to-one correspondence between the functions $f$ in $\mathcal N$ and the triples $(a,b,\tau)$ where $a\geq 0$, $b\in \mathbb R$ and $\tau \in M^+(\mathbb R)$ is  finite. It is  given by
$$
f(z)=az+b+\int_{-\infty}^{\infty}\frac{tz+1}{t-z}\, d\tau(t),\quad z\in \mathbb H.
$$
Writing $d\mu(t)=(t^2+1)d\tau(t)$ the representation takes the form
\begin{equation}
 \label{eq:nevanlinna-pick-integral}
f(z)=az+b+\int_{-\infty}^{\infty}\left(\frac{1}{t-z}-\frac{t}{t^2+1}\right)\, d\mu(t),\quad z\in \mathbb H.
\end{equation}
\end{prop}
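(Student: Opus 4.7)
The plan is to transfer the Herglotz representation from the disk to the upper half-plane via the Cayley transform $\varphi(w)=i(1+w)/(1-w)$, which maps $B(0,1)$ conformally onto $\mathbb{H}$ and $\partial B(0,1)\setminus\{1\}$ bijectively onto $\mathbb{R}$. Given $f\in\mathcal{N}$, I first observe that since $\operatorname{Im}f\geq 0$ on $\mathbb{H}$, the function $g:=-i(f\circ\varphi)$ is holomorphic on $B(0,1)$ with $\operatorname{Re}g\geq 0$, so it is a Herglotz function. The previous proposition then supplies $\beta\in\mathbb{R}$ and $\mu\in M^+(\partial B(0,1))$ with
\[
g(w)=i\beta+\int_{\partial B(0,1)}\frac{s+w}{s-w}\,d\mu(s),\qquad w\in B(0,1).
\]
Undoing the substitutions yields $f(z)=-\beta+i\int_{\partial B(0,1)}\frac{s+\varphi^{-1}(z)}{s-\varphi^{-1}(z)}\,d\mu(s)$.

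Next I would perform the change of variables on the boundary. Writing $w=\varphi^{-1}(z)=(z-i)/(z+i)$ and parametrising $s=(t-i)/(t+i)$ for $t\in\mathbb{R}$, a short computation (cross-multiplying and simplifying) gives the identity
\[
\frac{s+w}{s-w}=-i\,\frac{tz+1}{t-z},
\]
so multiplication by $i$ produces exactly the Nevanlinna--Pick kernel $(tz+1)/(t-z)$. The single boundary point $s=1$ (which is the preimage of $\infty$ under $\varphi^{-1}$) is not reached by this parametrisation, and at $s=1$ one instead computes $\frac{1+w}{1-w}=-iz$, whence a point mass of size $a$ at $s=1$ contributes $i\cdot(-iz)a=az$. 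Splitting $\mu=a\delta_1+\mu_0$ and pushing $\mu_0$ forward to $\mathbb{R}$ by the above change of variables produces a finite measure $\tau\in M^+(\mathbb{R})$ (finite because $\mu$ is), and setting $b=-\beta$ gives the required representation
\[
f(z)=az+b+\int_{-\infty}^{\infty}\frac{tz+1}{t-z}\,d\tau(t).
\]
The algebraic identity $\frac{tz+1}{t-z}=(t^2+1)\bigl(\frac{1}{t-z}-\frac{t}{t^2+1}\bigr)$ then yields the second form after setting $d\mu(t)=(t^2+1)\,d\tau(t)$.

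For the converse and one-to-one character, I would first verify that every such triple $(a,b,\tau)$ actually defines a Nevanlinna--Pick function: holomorphicity on $\mathbb{H}$ follows since $\frac{tz+1}{t-z}$ is holomorphic in $z\in\mathbb{H}$ for each $t\in\mathbb{R}$, and a direct computation gives $\operatorname{Im}\frac{tz+1}{t-z}=(t^2+1)\,\operatorname{Im}z/|t-z|^2\geq 0$, so $\operatorname{Im}f\geq a\operatorname{Im}z\geq 0$. For uniqueness, evaluating at $z=i$ shows $f(i)=b+i(a+\tau(\mathbb{R}))$ (using $(ti+1)/(t-i)=i$), and taking $z=iy$ with $y\to\infty$ gives $f(iy)/(iy)\to a$, so $a$ and $b$ are determined by $f$; then $\tau$ is pinned down by the Stieltjes inversion that recovers the Herglotz measure $\mu$ from the boundary values of $\operatorname{Re}g$.

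The main obstacle is the bookkeeping at the fixed boundary point $s=1$: one has to see that it is precisely this atom which produces the linear term $az$ in the upper-half-plane representation, while all the other mass is transferred to a finite measure on $\mathbb{R}$ via the boundary Cayley map. Once this decomposition is made, the algebraic identity relating the two kernels is the only non-trivial computation.
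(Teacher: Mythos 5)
Your proposal is correct and follows essentially the same route as the paper: transfer the Herglotz representation on the disk to $\mathbb{H}$ via the Cayley transform, with the point mass at $s=1$ producing the linear term $az$ (which the paper records as the $\tau(\{\infty\})w$ term) and the rest of $\mu$ pushed forward to a finite measure on $\mathbb{R}$. You also sketch the converse and uniqueness, which the paper omits, but the core argument and the kernel computation $\frac{s+w}{s-w}=-i\,\frac{tz+1}{t-z}$ are identical.
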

\begin{proof}
 We use a conformal mapping $\varphi:\mathbb H\to B(0,1)$ to transform functions in the upper half-plane to functions in the unit disk and apply the integral representation of Herglotz functions.

 Let $\varphi(w)=\nicefrac{(w-i)}{(w+i)}$ and notice that $\varphi^{-1}(z)=\nicefrac{i(1+z)}{(1-z)}$. For $f\in \mathcal N$, $g(z)=-if(\varphi^{-1}(z))$ is a Herglotz function. It thus admits a representation
 $$
 -if(\varphi^{-1}(z))=ib+\int_{\partial B(0,1)}\frac{s+z}{s-z}\, d\mu(s).
 $$
 In this relation we put $w=\varphi^{-1}(z)$ thus obtaining
 $$
 -if(w)=ib+\int_{\partial B(0,1)}\frac{s+\varphi(w)}{s-\varphi(w)}\, d\mu(s).
 $$
 Next, let $\tau=  \varphi^{-1}(\mu)$ and notice that $\tau$ is a positive and finite measure on $\mathbb R\cup \{\infty\}$. This gives
 \begin{align*}
 f(w)&=-b+i\int_{\mathbb R\cup \{\infty\}}\frac{\varphi(t)+\varphi(w)}{\varphi(t)-\varphi(w)}\, d\tau(t)\\
 &=-b+\int_{\mathbb R\cup \{\infty\}}\frac{tw+1}{t-w}\, d\tau(t)\\
 &=-b+\tau(\{\infty\})w+\int_{-\infty}^{\infty}\frac{tw+1}{t-w}\, d\tau(t).
 \end{align*}
This shows that any Nevanlinna--Pick function admits an integral representation of the asserted form.
\end{proof}

\begin{rem}
Notice that $\int_{-\infty}^{\infty}\nicefrac{d\mu(t)}{(t^2+1)}<\infty$.
 Warning: It tempting to split the integral \eqref{eq:nevanlinna-pick-integral} into the sum of two,
 $$
 \int_{-\infty}^{\infty}\frac{d\mu(t)}{t-z}-\int_{-\infty}^{\infty}\frac{td\mu(t)}{t^2+1}.
 $$
 This is in general not allowed, since both integrals may diverge.
 \end{rem}




\begin{prop}
[Expressing the triple in terms of $f$] The triple $(a,b,\mu)$ in \eqref{eq:nevanlinna-pick-integral} is expressed as follows
$$
a=\lim_{y\to \infty}\nicefrac{f(iy)}{iy},\quad b=\Re f(i), \quad \mu=\lim_{y\to 0_+}\nicefrac{1}{\pi}\Im f(x+iy)\, dx\ \text{(vaguely)}.
$$
\end{prop}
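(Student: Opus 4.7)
My plan is to isolate each component of the triple $(a,b,\mu)$ by probing $f$ at a cleverly chosen point or against a test function.

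For $b$, I would substitute $z=i$ into \eqref{eq:nevanlinna-pick-integral}. Using the algebraic identity $\frac{1}{t-i} - \frac{t}{t^2+1} = \frac{i}{t^2+1}$, the integral becomes $i\int d\mu(t)/(t^2+1)$, which is finite (by the remark following the representation theorem) and purely imaginary. Since $a,b\in\mathbb R$, taking real parts gives $b=\Re f(i)$ at once.

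For $a$, I would divide the representation at $z=iy$ through by $iy$. The term $az$ contributes $a$, the constant $b$ contributes $b/(iy)\to 0$, and the integral term simplifies via
$$\frac{1}{t-iy} - \frac{t}{t^2+1} = \frac{1+ity}{(t-iy)(t^2+1)}$$
to an integrand $\frac{t+1/(iy)}{(t-iy)(t^2+1)}$ that tends to $0$ pointwise in $t$ and is dominated, for $y\geq 1$, by a constant multiple of $1/(t^2+1)$ (using $|t-iy|\geq\max(|t|,y)$). Dominated convergence then yields $a=\lim_{y\to\infty} f(iy)/(iy)$.

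For the vague limit describing $\mu$, direct computation gives
$$\Im f(x+iy) = ay + \int_{-\infty}^\infty \frac{y}{(t-x)^2+y^2}\,d\mu(t),$$
which up to the vanishing term $ay$ is $\pi$ times the Poisson integral of $\mu$. Testing against $\phi\in C_c(\mathbb R)$ and applying Fubini yields
$$\frac{1}{\pi}\int\Im f(x+iy)\,\phi(x)\,dx \;=\; \frac{ay}{\pi}\int\phi(x)\,dx + \int (P_y * \phi)(t)\,d\mu(t),$$
where $P_y$ is the standard Poisson kernel on the line. The first term is $O(y)$, and $(P_y*\phi)(t)\to\phi(t)$ pointwise; the vague limit $\int\phi\,d\mu$ will follow from dominated convergence.

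The main (mild) obstacle is this last dominated-convergence step: because $\mu$ is in general infinite, the trivial bound $\|P_y*\phi\|_\infty\leq\|\phi\|_\infty$ does not suffice on its own. I would stitch this uniform bound (good on a compact neighbourhood of $\supp\phi$, where $\mu$ has finite mass) together with the decay estimate $|(P_y*\phi)(t)|\leq C/(t^2+1)$ valid for $t$ outside twice the support of $\phi$, obtained by bounding $(t-x)^2\geq t^2/4$ on that range. This exploits the crucial finiteness of $\int d\mu(t)/(t^2+1)$ and completes the argument; everything else is algebraic manipulation.
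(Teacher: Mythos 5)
The paper states this proposition without proof, so there is no paper argument to compare against; your proof stands on its own and is correct. Each of the three verifications is sound: the identity $\tfrac{1}{t-i}-\tfrac{t}{t^2+1}=\tfrac{i}{t^2+1}$ together with the finiteness of $\int d\mu(t)/(t^2+1)$ makes the integral at $z=i$ purely imaginary, yielding $b$; the rewriting $\tfrac{1}{t-iy}-\tfrac{t}{t^2+1}=\tfrac{1+ity}{(t-iy)(t^2+1)}$ combined with $|t-iy|\geq\max(|t|,y)$ gives the uniform domination $\lesssim 1/(t^2+1)$ needed for the limit defining $a$; and the Poisson-kernel representation of $\Im f$, together with your two-regime domination (the uniform bound $\|\phi\|_\infty$ on a compact neighbourhood of $\supp\phi$ where $\mu$ is finite, and the estimate $|(P_y*\phi)(t)|\leq Cy/t^2\leq C/t^2$ away from it, exploiting $(t-x)^2\geq t^2/4$), correctly handles the fact that $\mu$ may have infinite total mass while $\int d\mu(t)/(t^2+1)<\infty$. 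One could alternatively derive the vague limit by transporting the weak-star convergence $g(rs)\,dm(s)\to\mu$ from the Herglotz proof through the conformal map $\varphi$, but your direct Poisson-kernel argument is cleaner and more self-contained.
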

\begin{prop}
[Identifying the limit measure]
 Let $f\in \mathcal N$ have the representation \eqref{eq:nevanlinna-pick-integral} and assume that
$$
s(x) = \nicefrac{1}{\pi}\lim_{y\to 0^+}\Im f(x+iy)
$$
exists for $x$ in an open set $I$ of $\mathbb R$ and that the convergence is uniform on compact
subsets of I. Then the restriction of $\mu$ to $I$ has  density $s$ with respect to
Lebesgue measure.
\end{prop}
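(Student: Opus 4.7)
The plan is to test the asserted identity against compactly supported continuous functions on $I$, leveraging the vague-convergence formula $\mu=\lim_{y\to 0^+}\tfrac{1}{\pi}\Im f(x+iy)\,dx$ from the previous proposition. Let $\phi\in C_c(I)$ and set $K=\supp \phi$, which is a compact subset of $I$. Extending $\phi$ by zero gives an element of $C_c(\mathbb R)$, so by the vague convergence statement
$$
\int_{\mathbb R}\phi\,d\mu=\lim_{y\to 0^+}\frac{1}{\pi}\int_{\mathbb R}\phi(x)\,\Im f(x+iy)\,dx=\lim_{y\to 0^+}\int_{K}\phi(x)\,\frac{1}{\pi}\Im f(x+iy)\,dx.
$$

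Next I would push the limit inside the last integral. Since $\tfrac{1}{\pi}\Im f(\,\cdot\,+iy)$ converges to $s$ uniformly on the compact set $K\subset I$ (by hypothesis), and $\phi$ is bounded and supported on $K$, the interchange of $\lim$ and $\int_K$ is immediate. This yields
$$
\int_{\mathbb R}\phi\,d\mu=\int_{K}\phi(x)\,s(x)\,dx=\int_{I}\phi(x)\,s(x)\,dx.
$$
Note that $s$ is continuous on $I$ as a uniform limit on compacta of continuous functions, so the right-hand integral makes sense and $s\,dx$ defines a Radon measure on $I$.

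To conclude, the identity $\int\phi\,d\mu=\int\phi\,s\,dx$ holds for every $\phi\in C_c(I)$. Since two Radon measures on the locally compact Hausdorff space $I$ that agree on $C_c(I)$ are equal, the restriction of $\mu$ to $I$ coincides with $s(x)\,dx$, as claimed.

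The whole argument is quite short; the only real point to get right is that vague convergence (as supplied by the preceding proposition) is a statement on $\mathbb R$, whereas the hypothesis on $s$ is local in $I$. The restriction to test functions in $C_c(I)$ makes these compatible, and uniform convergence on $K=\supp\phi$ is exactly what is needed to move the limit past the integral. I do not foresee a genuine obstacle beyond being careful with these domain issues.
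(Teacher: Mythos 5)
The paper states this proposition without supplying a proof, so there is no written argument to compare against; judging your proof on its own terms, it is correct and is the natural argument. You test against $\phi\in C_c(I)$, invoke the vague convergence $\tfrac{1}{\pi}\Im f(x+iy)\,dx\to\mu$ from the preceding proposition, use the hypothesized uniform convergence on the compact set $K=\supp\phi$ to justify taking the limit inside the integral, and conclude by the uniqueness of Radon measures agreeing on $C_c(I)$. One small point worth making explicit: since $f\in\mathcal N$ we have $\Im f\geq 0$ on $\mathbb H$, hence $s\geq 0$ on $I$, so $s\,dx$ is indeed a \emph{positive} Radon measure on $I$; this is implicit in your write-up but it is what makes the final uniqueness step apply to $M^+(I)$ without further comment.
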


\begin{example} Integral representation of two elementary Nevanlinna--Pick functions.
\begin{enumerate}
\item A complicated way of writing $i$:
$$
i=\int_{-\infty}^{\infty}\left(\frac{1}{t-z}-\frac{t}{t^2+1}\right)\, dt.
$$
\item The principal logarithm
$$
\log z=\int_{-\infty}^0\left(\frac{1}{t-z}-\frac{t}{t^2+1}\right)\, dt.
$$
\end{enumerate}

\end{example}


 {}
 \begin{exercise}
  Show that $\log$ is a Nevanlinna--Pick function and derive the formula
 $$
 \log z=\int_{-\infty}^0\left(\frac{1}{t-z}-\frac{t}{t^2+1}\right)\, dt.
 $$
 Is it permissible to integrate term by term?
 \end{exercise}

 \begin{exercise}
  Show that $f(z)=-\nicefrac{\log (1+z)}{z}$ is a Nevanlinna--Pick function and derive the formula
 $$
 -\frac{\log (1+z)}{z}=-\frac{\pi}{4}+\int_{-\infty}^{-1}\left(\frac{1}{t-z}-\frac{t}{t^2+1}\right)\frac{dt}{-t}
 $$
 for $z\in \mathbb C\setminus (-\infty, -1]$.
Is it permissible to integrate term by term? {\em [Hint: assume that $z=x>0$ and work out the integral on the right hand side. Alternatively, apply a maximum principle to obtain positivity of the imaginary part of the function on the left hand side.]}
 \end{exercise}


 \begin{exercise}
  Show that
  $$
  f(z)=\frac{\log \Gamma(z+1)}{z}
  $$
  is a Nevanlinna--Pick function and derive the formula
 $$
 f(z)=-\gamma+\sum_{k=1}^{\infty}\left(\nicefrac{1}{k}-\arctan \nicefrac{1}{k}\right)+
 \int_{-\infty}^{-1}\left(\frac{1}{t-z}-\frac{t}{t^2+1}\right)c(t)\, dt,
 $$
 where $c(t)=\nicefrac{(1-k)}{t},$ for $t\in (-k,1-k)$ and $k=2,3,\ldots$ Is it permissible to integrate term by term?
{\em [Hint: Use the sum representation \eqref{eq:logGamma} of $\log \Gamma(z+1)$ to obtain the formula
 $$
 \frac{\log \Gamma(z+1)}{z}=-\gamma +\sum_{k=1}^{\infty}\left(\frac{1}{k}-\frac{\log(1+z/k)}{z}\right).
 $$
 Thereafter, use the previous exercise.]}
\end{exercise}

\section{Second lecture: Crossing the real line. Stieltjes functions. Positive definite kernels}
In this lecture we continue the study of the Nevanlinna--Pick functions and the related class of Stieltjes functions together with generalized Stieltjes functions of positive order.

We show that the function \eqref{eq:logGammaratio} related to Euler's gamma function belongs to the Nevanlinna--Pick class and that this gives a solution to the problem used as a motivation in the previous lecture. References: \cite{BP-1} and \cite{BP-2}.

If time permits we shall briefly introduce the so-called L\"owner kernel and its relation to Nevanlinna--Pick functions. References: \cite{D}.
 \begin{exercise}
  [Recap]
  Which of the following functions belong to $\mathcal N$?
  \begin{enumerate}
  \item $f(z)=\sqrt{z+1}$
  \item $f(z)=z^2$
  \item $f(z)=e^{-z}$
 \end{enumerate}
 \end{exercise}

\subsection{The function $\log \Gamma(z+1)/(z\log z)$}
Let us return to the motivating example mentioned in the previous lecture.
 \begin{prop}
 The function $f(z)=\nicefrac{\log \Gamma(z+1)}{(z\log z)}$ is a Nevanlinna--Pick function and
$$
 \displaystyle{\frac{\log \Gamma(z+1)}{(z\log z)}=1-\int_0^{\infty}\frac{d(-t)}{t+z}\, dt},
 $$
 where the graph of $d$ is given in Figure \ref{fig:d}. In particular $(-1)^{n-1}f^{(n)}(x)>0$ for $n\geq 1$.
 \end{prop}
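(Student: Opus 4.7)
The plan follows the four-step strategy outlined in Section~1.1 and starts from the representation
$$
g(z):=\frac{\log\Gamma(z+1)}{z}=-\gamma+\sum_{k=1}^{\infty}\left(\frac{1}{k}-\frac{\log(1+z/k)}{z}\right)
$$
given in the hint of the preceding exercise. Each summand is holomorphic on $\mathbb C\setminus(-\infty,-k]$ and is $O(z/k^{2})$ near the origin, so the series converges normally on compact subsets of $\mathbb C\setminus(-\infty,-1]$ and defines $g$ as a holomorphic function there. Consequently $f(z)=g(z)/\log z$ extends holomorphically to $\mathbb C\setminus(-\infty,0]$: the zero of $\log z$ at $z=1$ is cancelled by $g(1)=\log\Gamma(2)=0$, and at $z=0$ the value $g\to-\gamma$ together with $\log z\to-\infty$ gives $f\to 0$.

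The crux is to show $\Im f\geq 0$ throughout $\mathbb H$, so that $f\in\mathcal N$. Stirling's formula gives $f(z)\to 1$ as $|z|\to\infty$ in sectors $|\arg z|\leq\pi-\delta$, so $u:=-\Im f$ is bounded from above on $\mathbb H$. On the positive axis $f$ is real, hence $\Im f\to 0$; on the negative axis, a term-by-term inspection of the series shows that $\log(1+z/k)$ contributes $+i\pi$ from above precisely when $k<t$, whence
$$
\Im g(-t+i0^{+})=\pi\lfloor t\rfloor/t,\qquad \Im f(-t+i0^{+})=\frac{\pi\bigl(\lfloor t\rfloor\log t/t-A(t)\bigr)}{(\log t)^{2}+\pi^{2}},
$$
with $A(t):=\Re g(-t+i0^{+})=-\log|\Gamma(1-t)|/t$ by Euler's reflection formula. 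The nonnegativity on the negative axis thus reduces to $t^{\lfloor t\rfloor}|\Gamma(1-t)|\geq 1$; writing $t=s+n-1$ with $s\in(0,1)$ and $n=\lfloor t\rfloor+1$, and applying reflection once more, this factors as $t^{n-1}\Gamma(1-s)\geq s(s+1)\cdots(s+n-2)$, which follows from the elementary facts $\Gamma(u)\geq 1$ on $(0,1)$ and $(s+n-1)^{n-1}\geq\prod_{j=0}^{n-2}(s+j)$. At the negative integers $g$ has logarithmic singularities but $\Im f\to+\infty$ there, so the Phragmén-Lindelöf hypothesis holds at every boundary point; applying the Phragmén-Lindelöf principle in $\mathbb H$ to $u$ (with the growth constant equal to $0$) delivers $\Im f\geq 0$.

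Once $f\in\mathcal N$, the integral representation \eqref{eq:nevanlinna-pick-integral} applies. Boundedness of $f$ gives $a=\lim_{y\to\infty}f(iy)/(iy)=0$, and the vanishing of $\Im f$ on the positive axis places the support of $\mu$ in $(-\infty,0]$. Substituting $s=-t$ and collecting the resulting constants (the boundedness of $f$ together with the removable singularity $f(0)=0$ supplies the required integrability) recasts the representation in the stated Stieltjes form
$$
f(z)=c-\int_{0}^{\infty}\frac{d(-s)}{s+z}\,ds,\qquad d(-s)\geq 0.
$$
Letting $z=x\to+\infty$ on the positive axis makes the integral vanish and forces $c=1$. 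Differentiating $n$ times under the integral sign finally yields
$$
f^{(n)}(x)=(-1)^{n+1}n!\int_{0}^{\infty}\frac{d(-s)}{(s+x)^{n+1}}\,ds,
$$
so $(-1)^{n-1}f^{(n)}(x)>0$ for every $n\geq 1$.

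The main obstacle is Step~2: one must compute both the real and imaginary boundary values of $g$ on the negative axis, reduce the ensuing positivity statement to a gamma-function inequality controllable by elementary monotonicity, and deal with the logarithmic singularities of $g$ at the negative integers carefully enough that the Phragmén-Lindelöf hypothesis holds at every boundary point.
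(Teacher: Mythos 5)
Your proposal follows the paper's five-step outline exactly (holomorphic extension, boundary values of the imaginary part, positivity of those boundary values, a Phragm\'en--Lindel\"of argument, then the integral representation), and it usefully concretizes the paper's terse step~(3) ``use the functional equation'' by reducing $d\geq 0$ to the elementary inequality $t^{\lfloor t\rfloor}\,|\Gamma(1-t)|\geq 1$, which you prove by the recursion $\Gamma(z+1)=z\Gamma(z)$ and $\Gamma\geq 1$ on $(0,1)$. Two small remarks. First, the phrase ``by Euler's reflection formula'' is misplaced: $\Re g(-t+i0^{+})=-\log|\Gamma(1-t)|/t$ is a direct reading of the real part of $\log\Gamma(1-t+i0^{+})$, and it is the functional equation (not reflection) that you actually invoke in the positivity step. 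Second, and more substantively, the sentence ``Stirling gives $f\to 1$ in sectors $|\arg z|\leq\pi-\delta$, \emph{so} $u=-\Im f$ is bounded from above on $\mathbb H$'' is a non sequitur: those sectors miss the part of $\mathbb H$ near the negative real axis, which is precisely where $\log\Gamma(z+1)$ has its logarithmic singularities, so one must separately verify the paper's step~(4) (that $\Im f\geq C$, equivalently $u\leq -C$, for $|z|$ large near $(-\infty,0)$). This can be done from the boundary computation you already have (the boundary value of $\Im f$ decays like $1/(\log|t|)^{2}$ and diverges to $+\infty$ at the negative integers, both harmless for an upper bound on $u$), but as written the boundedness hypothesis of the Phragm\'en--Lindel\"of principle is only asserted, not established.
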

\begin{figure}
 \begin{center}
 \includegraphics[scale=0.25]{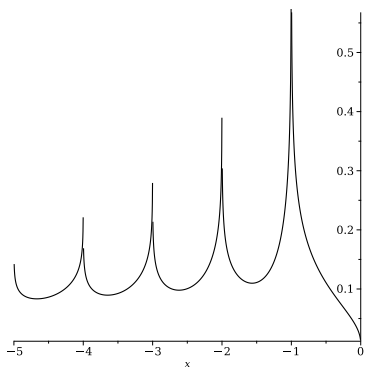}
\end{center}
\caption{The graph of $d$}
\label{fig:d}
\end{figure}

 {We shall not give a complete proof of this result but outline the main strategy in items (1)--(5) below.}
 \begin{enumerate}
  \item Extend $\log \Gamma$ to the complex plane cut along the non-positive real axis and find its boundary values. We have
\begin{align*}
 \log \Gamma(z+1)&=-\gamma z -\sum_{k=1}^{\infty}\left(\log(1+z/k)-z/k\right),\quad \text{so that}
 \\
  \lim_{z\to t, z\in \mathbb H}\log \Gamma(z)&=\log |\Gamma(t)|-ik\pi, \quad t\in [-k,1-k],\ k\geq 1.
  \end{align*}
  \item Consider a harmonic function and its boundary values.
  Define
 \begin{align*}
 V(z)&=\Im \left(\frac{\log \Gamma(z+1)}{z\log z}\right)\\
 &=\frac{1}{|z\log z|^2}\Big((x\log |z|-y\arg z)\arg \Gamma(z+1)\\
 &\phantom{=}\qquad\qquad -(y\log |z|+x\arg z)\log |\Gamma(z+1)|\Big),
 \end{align*}
 and show that
 $$
  \lim_{z\to t, z\in \mathbb H}V(z)=d(t),\ t\in\mathbb R,
  $$
  where $d(-k)=\infty$ for $k\in -\mathbb N$, $d(t)=0$ for $t\geq 0$ and
  $$
 d(t)=-\frac{\log |\Gamma(t+1)|+(k-1)\log |t|}{t((\log |t|)^2+\pi^2)},\quad t\in (-k,1-k),\ k\geq 1.
 $$
  \item Check positivity of the boundary values, $d$, of the harmonic function $V$. To see that
 $d(t)\geq 0$ for $t\in \mathbb R$ notice that
$$
-t((\log |t|)^2+\pi^2)d(t)=\log |\Gamma(t+1)|+(k-1)\log |t|,
 $$
 for $t\in (-k,1-k),\ k\geq 1$, and use the functional equation of the Gamma function.
  \item Control the growth of $V$ in the upper half-plane:
  $
  V(z)\geq C\ \text{for all}\  z\in \mathbb H \ \text{of sufficiently large absolute value.}
  $
  \item Find the integral representation.
 \end{enumerate}

\subsection{Extension across points of the real line}
Any $f\in \mathcal N$ is extended to $\mathbb C\setminus \mathbb R$ by reflection (meaning that we put $f(\overline{z})=\overline{f(z)}$ for $z\in \mathbb H$). Sometimes this extension is holomorphic across points of the real line.
\begin{example} Extending two elementary functions.
 \begin{itemize}
  \item $f(z)=\log z$ can be extended in this way to $\mathbb C\setminus (-\infty,0]$;
  \item $f(z)=i$ cannot be extended in this way across any non-empty open subset of $\mathbb R$.
 \end{itemize}

\end{example}

 \begin{prop}[Extension of Nevanlinna--Pick functions across the real line] Let $A$ be a closed subset of $\mathbb R$. The extension by reflection of a function $f\in \mathcal N$ represented in \eqref{eq:nevanlinna-pick-integral} by the triple $(a,b,\mu)$ can be extended holomorphically to $\mathbb C\setminus A$ if and only if $\supp \mu\subseteq A$.
 \end{prop}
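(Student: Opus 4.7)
The plan is to prove the two implications separately, leaning on the integral representation \eqref{eq:nevanlinna-pick-integral}, the reflection definition of the extension, and the earlier proposition on identifying the limit measure.

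For the sufficiency, assume $\supp \mu \subseteq A$. I would define
$$F(z) = az + b + \int_{-\infty}^{\infty}\left(\frac{1}{t-z} - \frac{t}{t^2+1}\right) d\mu(t)$$
and check directly that this integral makes sense and depends holomorphically on $z$ for every $z \in \mathbb C \setminus \supp \mu$, which contains $\mathbb C \setminus A$. The key estimate is to rewrite the integrand as $(1+tz)/((t-z)(t^2+1))$ and note that for $z$ in a compact set $K \subseteq \mathbb C \setminus A$ one has $|t-z| \geq d(K,A) > 0$ uniformly in $t \in \supp \mu$; since $\int d\mu(t)/(t^2+1) < \infty$, the integrand is dominated by a constant multiple of $1/(t^2+1)$, giving uniform convergence on $K$ and hence $F \in \mathcal H(\mathbb C \setminus A)$. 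On $\mathbb H$ the function $F$ coincides with $f$ by definition of the triple. Finally $F(\bar z) = \overline{F(z)}$, since conjugating the integrand and using that $a$, $b$ and $d\mu$ are real produces the integrand at $\bar z$; hence $F$ also agrees with the reflection extension of $f$ on the lower half-plane, which supplies the desired holomorphic extension to $\mathbb C \setminus A$.

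For the necessity, assume that the reflection extension $\tilde f$ is holomorphic on $\mathbb C \setminus A$, and fix an arbitrary open interval $I \subseteq \mathbb R \setminus A$; it suffices to show $\mu(I)=0$. For any $x \in I$ the function $\tilde f$ is holomorphic in a neighbourhood of $x$ and satisfies $\tilde f(\bar z) = \overline{\tilde f(z)}$, so by continuity $\tilde f(x) \in \mathbb R$. Consequently $\Im f(x+iy) \to 0$ as $y \to 0^+$, and by the continuity of $\tilde f$ this convergence is uniform on every compact subset of $I$. The proposition on identifying the limit measure then shows that $\mu|_I$ is absolutely continuous with density identically $0$, so $\mu(I)=0$. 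As $\mathbb R \setminus A$ is a countable union of such intervals (being open), this gives $\supp \mu \subseteq A$.

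The only mildly delicate point is the uniform domination in the sufficiency direction; once that is in place, both halves follow cleanly from earlier results and no additional complex-analytic machinery is needed.
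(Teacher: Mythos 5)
The paper states this proposition without providing a proof, so there is no in-paper argument to compare against; I will assess your attempt on its own.

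Your overall structure is sound and standard: prove sufficiency directly from the integral representation and prove necessity via the proposition on identifying the limit measure. The necessity half is complete and correct: for an open interval $I\subseteq\mathbb R\setminus A$, the reflection symmetry $\tilde f(\bar z)=\overline{\tilde f(z)}$ together with continuity of $\tilde f$ at points of $I$ forces $\tilde f(x)\in\mathbb R$ for $x\in I$, hence $\Im f(x+iy)\to 0$ locally uniformly on $I$ as $y\to 0^+$, so $\mu\vert_I$ has density $0$; summing over the countably many components of $\mathbb R\setminus A$ gives $\mu(\mathbb R\setminus A)=0$, i.e.\ $\supp\mu\subseteq A$.

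In the sufficiency half, however, the domination estimate is not justified as you have written it. From $|t-z|\ge d(K,A)=:\delta$ alone you only get
$$
\left|\frac{1+tz}{(t-z)(t^2+1)}\right|\le\frac{1+M|t|}{\delta\,(t^2+1)},\qquad M:=\sup_{z\in K}|z|,
$$
and the right-hand side is \emph{not} a constant multiple of $1/(t^2+1)$; moreover its $\mu$-integral can genuinely diverge even though $\int d\mu(t)/(t^2+1)<\infty$ (take, for instance, $d\mu=\mathbf 1_{[0,\infty)}\,dt$, $A=[0,\infty)$, $K=\{-1\}$). To obtain the claimed bound $C/(t^2+1)$ you must additionally use that for $|t|>2M$ one has $|t-z|\ge|t|-M>|t|/2$, so that $|1+tz|/|t-z|\le 2(1+M|t|)/|t|$ is bounded; combining this with the $\delta$-bound on the compact range $|t|\le 2M$ does yield a uniform $C/(t^2+1)$ domination and hence local uniform convergence and holomorphy of $F$ on $\mathbb C\setminus\supp\mu$. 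With that correction inserted, the remainder of your sufficiency argument (that $F$ restricts to $f$ on $\mathbb H$ and satisfies $F(\bar z)=\overline{F(z)}$, so it extends the reflection) is correct, and the whole proof is complete.
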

 \begin{notation}
  The class of functions with the property in the proposition above is denoted by $\mathcal N_A$.
 \end{notation}


\begin{prop}[Extension across $(0,\infty)$]
\label{prop:ext}
 The following conditions for a function $f:(0,\infty)\to [0,\infty)$ are equivalent:
 \begin{enumerate}
  \item $f$ can be extended to a function in $\mathcal N_{(-\infty,0]}$;
  \item $f$ admits the representation
  $$
  f(x)=cx+x\int_0^{\infty}\frac{d\sigma(t)}{t+x},
  $$
  where $c\geq 0$ and $\sigma\in M^+([0,\infty))$ satisfies $\int_0^{\infty}\nicefrac{d\sigma(t)}{(t+1)}<\infty$.
  \end{enumerate}

\end{prop}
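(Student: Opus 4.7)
The plan is to prove the two implications separately: (2)$\Rightarrow$(1) by direct construction, and (1)$\Rightarrow$(2) by manipulating the Nevanlinna--Pick integral representation of $F$ and extracting the additional information forced by the positivity on $(0,\infty)$.

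For (2)$\Rightarrow$(1), I would define
\[
F(z) := cz + z\int_0^\infty \frac{d\sigma(t)}{t+z},\qquad z\in\mathbb{C}\setminus(-\infty,0].
\]
On compact subsets $K$ of $\mathbb{C}\setminus(-\infty,0]$ one has $|t+z|^{-1}\leq C_K/(t+1)$, so the integral converges locally uniformly and $F$ is holomorphic; clearly $F|_{(0,\infty)}=f$. The identity $\Im(z/(t+z))=t\,\Im z/|t+z|^2\geq 0$ for $t\geq 0$, $\Im z>0$, combined with $c\geq 0$, gives $F(\mathbb{H})\subseteq\overline{\mathbb{H}}$, and by the extension proposition $F\in\mathcal{N}_{(-\infty,0]}$.

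For (1)$\Rightarrow$(2), let $F\in\mathcal{N}_{(-\infty,0]}$ extend $f$, with triple $(a,b,\mu)$ and $\supp\mu\subseteq(-\infty,0]$. The substitution $s=-t$ produces a measure $\tilde\mu$ on $[0,\infty)$ and recasts the representation as
\[
F(z)=az+b+\int_{[0,\infty)}\frac{sz-1}{(s^2+1)(s+z)}\,d\tilde\mu(s).
\]
The next step uses $F(x)\geq 0$ for $x>0$ as $x\to 0^+$: an atom $\tilde\mu(\{0\})>0$ would contribute $-\tilde\mu(\{0\})/x$, while the $(0,\infty)$--part of the integral stays bounded above by $x\int d\tilde\mu/(s^2+1)$, so $\tilde\mu(\{0\})=0$. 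Monotone convergence applied to $-1/((s^2+1)(s+x))$ then forces $\int_0^\infty d\tilde\mu(s)/(s(s^2+1))<\infty$, and
\[
k := b-\int_0^\infty\frac{d\tilde\mu(s)}{s(s^2+1)}=\lim_{x\to 0^+}F(x)\geq 0.
\]
The elementary identity
\[
\frac{sz-1}{(s^2+1)(s+z)}=\frac{z}{s(s+z)}-\frac{1}{s(s^2+1)}
\]
then rewrites the representation as $F(z)=az+k+z\int_0^\infty d\tilde\mu(s)/(s(s+z))$. Setting $c:=a$ and $\sigma:=k\delta_0+(1/s)\,d\tilde\mu$ on $[0,\infty)$ absorbs the constant via $z\cdot k/z=k$, producing the desired form; integrability $\int_0^\infty d\sigma(t)/(t+1)<\infty$ follows from $1/(t(t+1))\leq 2/(t(t^2+1))$ near $0$ and $1/(t(t+1))\leq 2/(t^2+1)$ at infinity.

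The main obstacle I anticipate is the bookkeeping at $t=0$: one must independently rule out an atom of $\tilde\mu$ at the origin, and the additive constant $k$ produced by the regularizing subtraction $-s/(s^2+1)$ in the Nevanlinna--Pick kernel has to be re-encoded as a point mass $k\delta_0$ inside $\sigma$. Without this device the naive prescription $d\sigma=d\tilde\mu/s$ misses $k$, and the new integrability $\int d\sigma/(t+1)<\infty$ near $0$ rests on the constraint $\int d\tilde\mu/(s(s^2+1))<\infty$ forced by the non-negativity of $F$, rather than on the original Nevanlinna--Pick integrability $\int d\mu/(s^2+1)<\infty$ alone.
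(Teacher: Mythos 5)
Your proof is correct and takes essentially the same route as the paper: both directions are handled as you propose, and in (1)$\Rightarrow$(2) the paper likewise uses the Nevanlinna--Pick representation (written with the finite measure $\tau=d\tilde\mu/(s^2+1)$ on $[0,\infty)$, so the kernel reads $\tfrac{tx-1}{t+x}$), rules out the atom at $0$ together with establishing $\int d\tau/t<\infty$ via monotone convergence, and absorbs the resulting nonnegative constant $b-\int d\tau/t$ as a point mass $\epsilon_0$ in $\sigma$, exactly your $k\delta_0$. The only cosmetic difference is that the paper first notes $f'\geq 0$ (by differentiating under the integral), so $f(0+)$ exists and is nonnegative, letting one monotone-convergence pass do the work that you split into a separate atom argument followed by monotone convergence; your version is equally valid.
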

\begin{proof}
 (1) implies (2): We have the representation
 $$
 f(x)=ax+b+\int_0^{\infty}\frac{tx-1}{t+x}\, d\tau(t),
 $$
 where $\tau$ is renamed as its reflection in $0$. By differentiation it follows that $f'(x)\geq 0$ for $x>0$. Hence, $f$ being non-negative on $(0,\infty)$, $\lim_{x\downarrow 0}f(x)$ is finite. Therefore, by Lebesgue's monotone convergence theorem we get from the relation above
 $$
 b-\int_0^{\infty}\frac{d\tau(t)}{t}=\lim_{x\downarrow 0}f(x)=f(0+).
 $$
 Hence $\tau$ has no mass at the origin and $\int_0^{\infty}\frac{d\tau(t)}{t}<\infty$. The rest is computation:
 \begin{align*}
  f(x)&=ax+b-\int_0^{\infty}\frac{d\tau(t)}{t}+\int_0^{\infty}\left(\frac{1}{t}+\frac{tx-1}{t+x}\right)\, d\tau(t)\\
  &=ax+b-\int_0^{\infty}\frac{d\tau(t)}{t}+x\int_0^{\infty}\frac{1}{x+t}\left(t+\frac{1}{t}\right)\, d\tau(t)\\
  &=ax+x\int_0^{\infty}\frac{d\sigma(t)}{x+t},
 \end{align*}
 with
 $$
 \sigma=\left(b-\int_0^{\infty}\frac{d\tau(t)}{t}\right)\epsilon_0+(t+1/t)d\tau(t).
 $$
(Here $\epsilon_0$ denotes the point mass at $0$.)

(2) implies (1) is easy.
\end{proof}


\begin{example}
 We have
\begin{itemize}
 \item $z\mapsto \log z$ is in $\mathcal N_{(-\infty,0]}$ but is not positive on $(0,\infty)$,
 \item $z\mapsto \log (z+1)$ is in $\mathcal N_{(-\infty,0]}$ and is also  positive on $(0,\infty)$,
 \item $z\mapsto \nicefrac{\log \Gamma(z+1)}{z\log z}$ is in $\mathcal N_{(-\infty,0]}$ and is also  positive on $(0,\infty)$.
\end{itemize}

\end{example}

\subsection{Stieltjes functions}
 \begin{defn}
  [The class $\mathcal S$] A function $f:(0,\infty)\to \mathbb R$ is a Stieltjes function if
  $$
  f(x)=c+\int_0^{\infty}\frac{d\sigma(t)}{t+x},
  $$
  for some $c\geq 0$ and $\sigma\in M^+([0,\infty))$ making the integral converge. The class of Stieltjes functions is denoted by $\mathcal S$.
 \end{defn}

 The proof of the next result follows from Proposition \ref{prop:ext}.
 \begin{prop}{Theorem}
  $f\in \mathcal S$ if and only if $x\mapsto xf(x)$ can be extended to a function in $\mathcal N_{(-\infty,0]}$.
 \end{prop}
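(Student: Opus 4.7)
The plan is to apply Proposition \ref{prop:ext} to the auxiliary function $g(x) := xf(x)$ for $x>0$. The guiding observation is that multiplying the defining representation of a Stieltjes function by $x$ produces precisely the representation in condition (2) of that proposition, so the theorem is essentially a transcription.

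For the forward direction, I assume $f\in \mathcal{S}$, so $f(x)=c+\int_0^\infty \frac{d\sigma(t)}{t+x}$ with $c\geq 0$ and $\sigma\in M^+([0,\infty))$ making the integral converge (equivalently, $\int_0^\infty d\sigma(t)/(t+1)<\infty$). Multiplying by $x$ gives
$$
g(x) = cx + x\int_0^\infty \frac{d\sigma(t)}{t+x},
$$
which is condition (2) of Proposition \ref{prop:ext} applied to $g$. Since the Stieltjes representation forces $f\geq 0$, we also have $g\geq 0$ on $(0,\infty)$, so Proposition \ref{prop:ext} yields that $g$ extends to a function in $\mathcal{N}_{(-\infty,0]}$.

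For the converse, suppose $g(x)=xf(x)$ extends to a function in $\mathcal{N}_{(-\infty,0]}$; regarding $f$, and hence $g$, as taking values in $[0,\infty)$ on $(0,\infty)$ (implicit in the assertion that $f$ be Stieltjes), Proposition \ref{prop:ext} supplies constants $c\geq 0$ and a measure $\sigma\in M^+([0,\infty))$ with $\int_0^\infty d\sigma(t)/(t+1)<\infty$ such that
$$
g(x) = cx + x\int_0^\infty \frac{d\sigma(t)}{t+x}.
$$
Dividing through by $x>0$ recovers $f(x)=c+\int_0^\infty d\sigma(t)/(t+x)$, so $f\in \mathcal{S}$.

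The only real bookkeeping step is the positivity requirement demanded by Proposition \ref{prop:ext}: this is automatic from the Stieltjes representation in one direction, and corresponds in the other direction to the non-negativity built into the definition of $\mathcal{S}$. Beyond that, no new analytic machinery is needed—the theorem is a direct corollary obtained by running Proposition \ref{prop:ext} on $g=xf$ and dividing by $x$.
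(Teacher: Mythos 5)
Your proof is correct and follows exactly the route the paper indicates: the paper's entire proof is the remark that the result ``follows from Proposition~\ref{prop:ext},'' and your writeup simply spells out the bookkeeping of applying that proposition to $g(x)=xf(x)$ and dividing by $x$. One small clarification: in the converse direction the non-negativity of $f$ must be taken as a standing hypothesis rather than ``built into the definition of $\mathcal{S}$'' (which you are trying to establish), since $\mathcal{N}_{(-\infty,0]}$ alone does not force positivity --- for instance $g(z)=z-1$ lies in $\mathcal{N}_{(-\infty,0]}$ but $f(x)=1-1/x$ changes sign and is not Stieltjes.
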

 Another characterization of Stieltjes functions is the following.
 \begin{prop}
 A function $f:(0,\infty)\to \mathbb R$ belongs to $\mathcal S$ if and only if
 \begin{enumerate}
  \item[(i)] $f(x)\geq 0$ for $x>0$, and
  \item[(ii)] $f$ has a holomorphic extension to $\mathbb C\setminus (-\infty,0]$ with $\Im f(z)\leq 0$ for $\Im z>0$.
 \end{enumerate}

\end{prop}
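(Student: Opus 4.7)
The plan is to treat the two implications separately.

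\textbf{($\Rightarrow$).} If $f\in\mathcal S$, write $f(x)=c+\int_0^\infty (t+x)^{-1}\,d\sigma(t)$ and extend to $\mathbb C\setminus(-\infty,0]$ by the same formula. Holomorphy follows from local uniform convergence of the integral together with Morera, while the direct computation $\Im f(z)=-(\Im z)\int_0^\infty |t+z|^{-2}\,d\sigma(t)$ gives (ii); (i) is immediate.

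\textbf{($\Leftarrow$).} Given (i) and (ii), set $g:=-f$. Then $g$ is holomorphic on $\mathbb C\setminus(-\infty,0]$, satisfies $\Im g\geq 0$ on $\mathbb H$, and is real on $(0,\infty)$. By Schwarz reflection, $g|_{\mathbb H}$ extends holomorphically across $(0,\infty)$, and this reflection extension must coincide with the one already provided by (ii) by the identity principle on the connected set $\mathbb C\setminus(-\infty,0]$. Hence $g\in\mathcal N_{(-\infty,0]}$ by the proposition on holomorphic extension of Nevanlinna--Pick functions across the real line.

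The $\mathcal N$-integral representation with support in $(-\infty,0]$, transported by $t=-s$, gives, for $x>0$, $g(x)=ax+b+\int_0^\infty \tfrac{sx-1}{(s^2+1)(s+x)}\,d\nu(s)\leq 0$, with $a\geq 0$, $b\in\mathbb R$, $\nu\in M^+([0,\infty))$ and $\int(s^2+1)^{-1}\,d\nu<\infty$. Positivity of $f$ drives two limits. First, $g(x)/x = a + b/x + \int \tfrac{s-1/x}{(s^2+1)(s+x)}\,d\nu(s)$; for $x\geq 1$ the integrand is bounded by $1/(s^2+1)$, so dominated convergence makes the integral tend to $0$, forcing $a\leq 0$ and hence $a=0$. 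Second, the rearrangement $\phi_x(s):=\tfrac{sx-1}{(s^2+1)(s+x)}+\tfrac{1}{x(s^2+1)}=\tfrac{s(x^2+1)}{x(s^2+1)(s+x)}\geq 0$ tends pointwise to $s/(s^2+1)$, and Fatou's lemma applied as $x\to\infty$ yields $\int_0^\infty s(s^2+1)^{-1}\,d\nu\leq -b<\infty$. With this integrability the two kernels decouple: $f(x)=c+\int_0^\infty (s+x)^{-1}\,d\nu(s)$ where $c=-b-\int s(s^2+1)^{-1}\,d\nu\geq 0$ (the sign of $c$ coming from the same Fatou inequality). Convergence of the Stieltjes integral for each $x>0$ follows from local finiteness of $\nu$ and the bound $(s+x)^{-1}\leq 2s(s^2+1)^{-1}$ on $s\geq 1$, proving $f\in\mathcal S$.

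\textbf{Main obstacle.} The crux is upgrading the Nevanlinna--Pick integrability $\int(s^2+1)^{-1}\,d\nu<\infty$ to the sharper $\int s(s^2+1)^{-1}\,d\nu<\infty$ required to separate the two kernels in the representation of $g$. The sign condition (i) is precisely the ingredient that powers the Fatou argument above; without it one only sees the weaker tail behaviour coming from the Nevanlinna--Pick class alone.
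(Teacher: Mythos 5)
Your proof is correct and takes essentially the same approach as the paper's: identify $-f$ as a function in $\mathcal N_{(-\infty,0]}$, then use positivity of $f$ to force $a=0$ and to upgrade the Nevanlinna--Pick integrability to $\int_0^\infty s(s^2+1)^{-1}\,d\nu<\infty$, after which the kernel decouples into the Stieltjes form. The differences are cosmetic: you work with the $\mu$-normalisation, dominated convergence and Fatou where the paper works with the finite measure $\tau$ and monotone convergence, and you spell out the routine forward direction, which the paper omits.
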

\begin{proof}
 If $f$ satisfies (i) and (ii) then $-f\in \mathcal N_{(-\infty,0]}$ and thus there exist $a\geq 0$, $b\in \mathbb R$ and a finite positive measure $\tau$ on $[0,\infty)$ such that
 $$
 -f(x)=ax+b+\int_0^{\infty}\frac{tx-1}{t+x}\, d\tau(t)=ax+b+x\int_0^{\infty}\frac{td\tau(t)}{t+x} -\int_0^{\infty}\frac{d\tau(t)}{t+x}.
 $$
 Now use $f(x)\geq 0$ to get
 $$
 ax+b\leq ax+b+x\int_0^{\infty}\frac{td\tau(t)}{t+x} \leq\int_0^{\infty}\frac{d\tau(t)}{t+x}\to 0,\quad \text{as} \ x\to \infty.
 $$
 This forces $a$ to be equal to $0$ and consequently,
 $$
 x\int_0^{\infty}\frac{td\tau(t)}{t+x} \leq-b+\int_0^{\infty}\frac{d\tau(t)}{t+x}.
 $$
 By monotone convergence (letting $x\to \infty$) we see that $\int_0^{\infty}t\, d\tau(t)\leq -b$. This gives
 \begin{align*}
 f(x)&=-b+\int_0^{\infty}\frac{1-tx}{t+x}\, d\tau(t)\\
 &=-b+\int_0^{\infty}\frac{-t(x+t)+t^2+1}{t+x}\, d\tau(t)\\
 &=-b-\int_0^{\infty}t\, d\tau(t)+\int_0^{\infty}\frac{t^2+1}{t+x}\, d\tau(t),
 \end{align*}
 showing that $f$ belongs to $\mathcal S$.
\end{proof}

%

 \begin{defn}
  [The class $\mathcal S_{\lambda}$] Let $\lambda>0$.
A function $f:(0,\infty)\to \mathbb R$ is a generalized Stieltjes function of positive order $\lambda$ if
  $$
  f(x)=c+\int_0^{\infty}\frac{d\mu(t)}{(t+x)^{\lambda}},
  $$
  for some $c\geq 0$ and a positive measure $\mu$ on $[0,\infty)$ for which
  $$
  \int_0^{\infty} \frac{ d\mu(t)}{(1+t)^{\lambda}}<\infty.
  $$
The class of these functions is denoted as $\mathcal S_{\lambda}$.
\end{defn}

Notice that  $\mathcal S_1=\mathcal S$ is the class of ordinary Stieltjes functions.


\begin{exercise}
 Show that
 \begin{enumerate}
  \item $f(x)=\nicefrac{1}{x^{\alpha}}\in \mathcal S$ if and only if $0\leq \alpha\leq 1$,
  \item $f(x)=\nicefrac{\log(1+x)}{x}\in \mathcal S$,
  \item $f(x)=\log\left(\nicefrac{(x+b)}{(x+a)}\right)\in \mathcal S$ for $0\leq a\leq b$, and in particular that $\log\left(1+\nicefrac{1}{x}\right)$ belongs to $\mathcal S$.
 \end{enumerate}

\end{exercise}
\begin{exercise}
 Suppose that $f\in \mathcal S\setminus \{0\}$. Show that $x\mapsto \nicefrac{1}{(xf(x))}\in \mathcal S$.
\end{exercise}

\begin{exercise}
 Let $f\in \mathcal N_{(-\infty,0]}$. Show that there exist $c\geq 0$ and a positive measure $\mu$ on $[0,\infty)$ such that
 $$
 f'(x)=c+\int_0^{\infty}\frac{d\mu(t)}{(t+x)^{2}}.
  $$
 Thus the derivative of $f$ is a generalized Stieltjes function of order $2$.
\end{exercise}


\subsection{Operator monotone functions (optional)}
  Let $I$ be an open interval of the real line. A continuous function $f:I\to \mathbb R$ is operator monotone if for any two self adjoint operators $\mathbf A$ and $\mathbf B$ in a Hilbert space with their spectrum contained in $I$ we have
  $$
  \mathbf A\leq \mathbf B\Rightarrow f(\mathbf A)\leq f(\mathbf B).
  $$
  Notice that when $\mathbf A$ is a $n\times n$ matrix, $f(\mathbf A)$ can be defined using diagonalization; in the general case $f(\mathbf A)$ is defined via the functional calculus. L\"{o}wner proved the following fundamental results.
  \begin{itemize}
   \item For  $f$ to be operator monotone it is enough to consider $n\times n$ matrices for any $n$.
   \item The class of operator monotone functions is exactly $\mathcal N_{\mathbb R\setminus I}$.
  \end{itemize}

%
%
%

\begin{defn}
 A function $K:I\times I\to \mathbb R$ is a positive definite kernel if for any $n$, any points $t_1,\ldots,t_n\in I$ and any complex numbers $z_1,\ldots,z_n$,
$$
\sum_{k=1}^nK(t_i,t_j)z_i\overline{z_j} \geq 0.
$$
\end{defn}
\begin{defn}For a $C^1$-function $f:I\to \mathbb R$ the {\em L\"owner kernel} $K_f$ is defined as
$$
K_f(s,t)=\nicefrac{(f(s)-f(t))}{(s-t)},\quad s\neq t, \qquad  K_f(t,t)=f'(t).
$$
\end{defn}

\begin{prop}
[L\"{o}wner's theorem]
Let $I$ be an open interval and let $f:I\to \mathbb R$. The following are equivalent:
\begin{enumerate}
\item $f$ is operator monotone on $I$.
\item The L\"{o}wner kernel $K_f$ is positive definite on $I\times I$.
\item $f$ has a holomorphic  extension to $\mathbb C\setminus (\mathbb R\setminus I)$ to a Nevanlinna--Pick function.
\end{enumerate}
\end{prop}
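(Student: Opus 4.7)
The plan is to prove the cycle $(3)\Rightarrow(2)\Rightarrow(1)\Rightarrow(3)$, using the integral representation of Nevanlinna--Pick functions in $\mathcal{N}_{\mathbb{R}\setminus I}$ for the first step, the Daleckii--Krein derivative formula combined with Schur's product theorem for the second, and a finite-dimensional Pick interpolation argument together with a normal families compactness argument for the third.

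For $(3)\Rightarrow(2)$: if $f$ extends to a function in $\mathcal{N}_{\mathbb{R}\setminus I}$ then by \eqref{eq:nevanlinna-pick-integral} together with the characterization of $\mathcal{N}_A$ by support, we may write
$$f(z)=az+b+\int_{\mathbb{R}\setminus I}\left(\frac{1}{t-z}-\frac{t}{t^2+1}\right)d\mu(t).$$
A direct computation of the divided difference at $s,t\in I$ gives
$$K_f(s,t)=a+\int_{\mathbb{R}\setminus I}\frac{d\mu(\lambda)}{(\lambda-s)(\lambda-t)},$$
so for any $s_1,\dots,s_n\in I$ and $z_1,\dots,z_n\in\mathbb{C}$,
$$\sum_{i,j}K_f(s_i,s_j)z_i\overline{z_j}=a\left|\sum_i z_i\right|^2+\int_{\mathbb{R}\setminus I}\left|\sum_i\frac{z_i}{\lambda-s_i}\right|^2 d\mu(\lambda)\geq 0.$$

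For $(2)\Rightarrow(1)$: given self-adjoint $\mathbf{A}\leq \mathbf{B}$ with spectra in $I$, set $\mathbf{A}(t)=(1-t)\mathbf{A}+t\mathbf{B}$, so that $\dot{\mathbf{A}}(t)=\mathbf{B}-\mathbf{A}\geq 0$. The Daleckii--Krein formula asserts that, in the eigenbasis of $\mathbf{A}(t)$ with eigenvalues $\lambda_i(t)$, the matrix of $\frac{d}{dt}f(\mathbf{A}(t))$ has $(i,j)$-entry $K_f(\lambda_i(t),\lambda_j(t))\,C_{ij}(t)$, where $[C_{ij}(t)]$ is the matrix of $\dot{\mathbf{A}}(t)$ in that basis. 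This is the Hadamard product of the positive semidefinite Löwner matrix (by (2)) with the positive semidefinite matrix of $\dot{\mathbf{A}}(t)$, hence positive semidefinite by Schur's product theorem. Integrating $\frac{d}{dt}f(\mathbf{A}(t))\geq 0$ over $t\in[0,1]$ yields $f(\mathbf{B})\geq f(\mathbf{A})$; the passage from matrices to general self-adjoint operators is then handled through the spectral theorem and functional calculus.

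For $(1)\Rightarrow(3)$: this is Löwner's deep direction and the principal obstacle. The plan is first to exploit operator monotonicity on $n\times n$ matrices, via the Daleckii--Krein formula read in reverse, to conclude that every finite Löwner matrix $[K_f(s_i,s_j)]$ based on points of $I$ is positive semidefinite. Next, after conformally transporting $I$ to a standard interval, I would apply the classical Pick interpolation theorem: positive semidefiniteness of the Pick matrix at any finite subset $\{s_1,\dots,s_n\}\subset I$ is exactly the condition for the existence of a Nevanlinna--Pick function interpolating $f$ (and, after differentiation, its derivative values) at those points. Exhausting $I$ by an increasing sequence of finite sets produces a sequence $(F_n)\subset\mathcal{N}$; a standard a priori bound, derived from the Nevanlinna--Pick integral representation applied at a reference point, shows these are locally uniformly bounded on $\mathbb{H}$, so by Montel's theorem a subsequence converges normally to some $F\in\mathcal{N}$ which agrees with $f$ on $I$. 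Schwarz reflection across $I$ then delivers the desired holomorphic extension to $\mathbb{C}\setminus(\mathbb{R}\setminus I)$. The principal subtleties I expect are securing the uniform bound that powers the normal families argument and carefully handling the endpoints of $I$ in the reflection step.
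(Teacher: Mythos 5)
Note first that the paper states L\"owner's theorem without proof (the subsection is explicitly marked optional), so there is no paper argument to compare against; I review your proposal on its own merits.

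Your arguments for $(3)\Rightarrow(2)$ and $(2)\Rightarrow(1)$ are correct. The divided-difference computation giving $K_f(s,t)=a+\int_{\mathbb R\setminus I}\frac{d\mu(\lambda)}{(\lambda-s)(\lambda-t)}$ is exact and makes positive definiteness of $K_f$ manifest. The Daleckii--Krein/Schur-product argument for $(2)\Rightarrow(1)$ is the standard one; the passage to unbounded operators and the handling of eigenvalue collisions each deserve a sentence but are routine. The initial step of your $(1)\Rightarrow(3)$ --- reading Daleckii--Krein backwards to conclude that every finite L\"owner matrix over $I$ is positive semidefinite --- is also fine.

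The gap lies in the next step, where you ``apply the classical Pick interpolation theorem.'' The classical Pick theorem concerns \emph{interior} nodes $z_1,\dots,z_n\in\mathbb H$ with Pick matrix $\bigl[(w_i-\overline{w_j})/(z_i-\overline{z_j})\bigr]$. Your nodes $s_j$ sit on the boundary of $\mathbb H$, where the Pick kernel degenerates; the matrix you would actually need is the L\"owner matrix $\bigl[K_f(s_i,s_j)\bigr]$ with the (angular) derivative values on the diagonal. That positivity of this matrix suffices for boundary interpolation of the pairs $(f(s_j),f'(s_j))$ by a function in $\mathcal N$ is a genuinely distinct theorem --- it is precisely the finite-node core of the hard implication of L\"owner's theorem, which L\"owner established via continued fractions and rational approximants, and it does not follow from an appeal to the interior Pick theorem. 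Citing the interior theorem here is therefore circular in effect. One could try to repair this by pushing the nodes into $\mathbb H$, say $z_j=s_j+i\varepsilon$, and interpolating there, but verifying that the perturbed Pick matrices are positive semidefinite and then executing a simultaneous limit in $\varepsilon$ and $n$ is exactly the work that needs doing, not a footnote. (The Bendat--Sherman route through the Hamburger moment problem is an alternative that bypasses interpolation entirely.) By contrast, the subtleties you flag --- the a priori bound for normality and the treatment of the endpoints of $I$ in the reflection step --- are real but secondary: a normality bound follows from Harnack once a single interior value is pinned, and Schwarz reflection across $I$ is standard once one knows the representing measure of the Nevanlinna--Pick function avoids $I$.
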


\section{Third lecture: Inverses  and higher order gamma functions}
Inverse functions to Euler's gamma function are investigated. References: \cite{P-inverse-gamma}.

The so-called $G$-function of Barnes is introduced (\cite{Barnes-double}) together with a hierarchy of higher order (double, triple, \ldots) gamma functions. See also \cite{Ruijsenaars}. We mention briefly a related Nevanlinna--Pick function (similar to the function from the previous lecture)  in  the framework of the $G$-function. Reference: \cite{Double-Pick}.

Asymptotic expansions of higher order gamma functions are described with focus on the remainders in these expansions.
References: \cite{Double}, \cite{Triple-I} and \cite{Triple-II}.

 \begin{exercise}
  {Recap -- Stieltjes functions. Are the following statements true or false?}
  \begin{enumerate}
  \item $\mathcal N _{(-\infty,0]}=\mathcal S$
  \item $f,g\in \mathcal S \Rightarrow fg\in \mathcal S$
 \end{enumerate}
 \end{exercise}

 \begin{exercise}
  {Discuss} ways of proving that a given function $f$ defined on the positive half-line belongs to $\mathcal S$.
 \end{exercise}

\subsection{Inverse functions}
We shall investigate mapping properties of inverses of the Gamma function using conformal mapping.
  {Recall that the mapping $\log \Gamma$} is given by
  \begin{align*}
   \log \Gamma(z)&=-\log z-\gamma z-\sum_{k=1}^{\infty}
   \left(\log \left(1+\nicefrac{z}{k}\right)-\nicefrac{z}{k}\right)
   \end{align*}
   for $z\in \mathbb C\setminus (-\infty,0]$.
  Its derivative is the so-called digamma function (or $\psi$-function):
$$
\psi(z)=\Gamma'(z)/\Gamma(z)=(\log \Gamma)'(z)=-\gamma+\sum_{k=0}^{\infty}\left(\nicefrac{1}{(k+1)}-\nicefrac{1}{(k+z)}\right).
$$
Figure \ref{fig:gamm-plot-negative} shows a plot of the Gamma function also on the negative line. This plot indicates the fact that there is exactly one extremal value for $\Gamma$ on each of the intervals $(-k,-k+1)$ for $k\geq 1$.
\begin{figure}
 \includegraphics[scale=0.4]{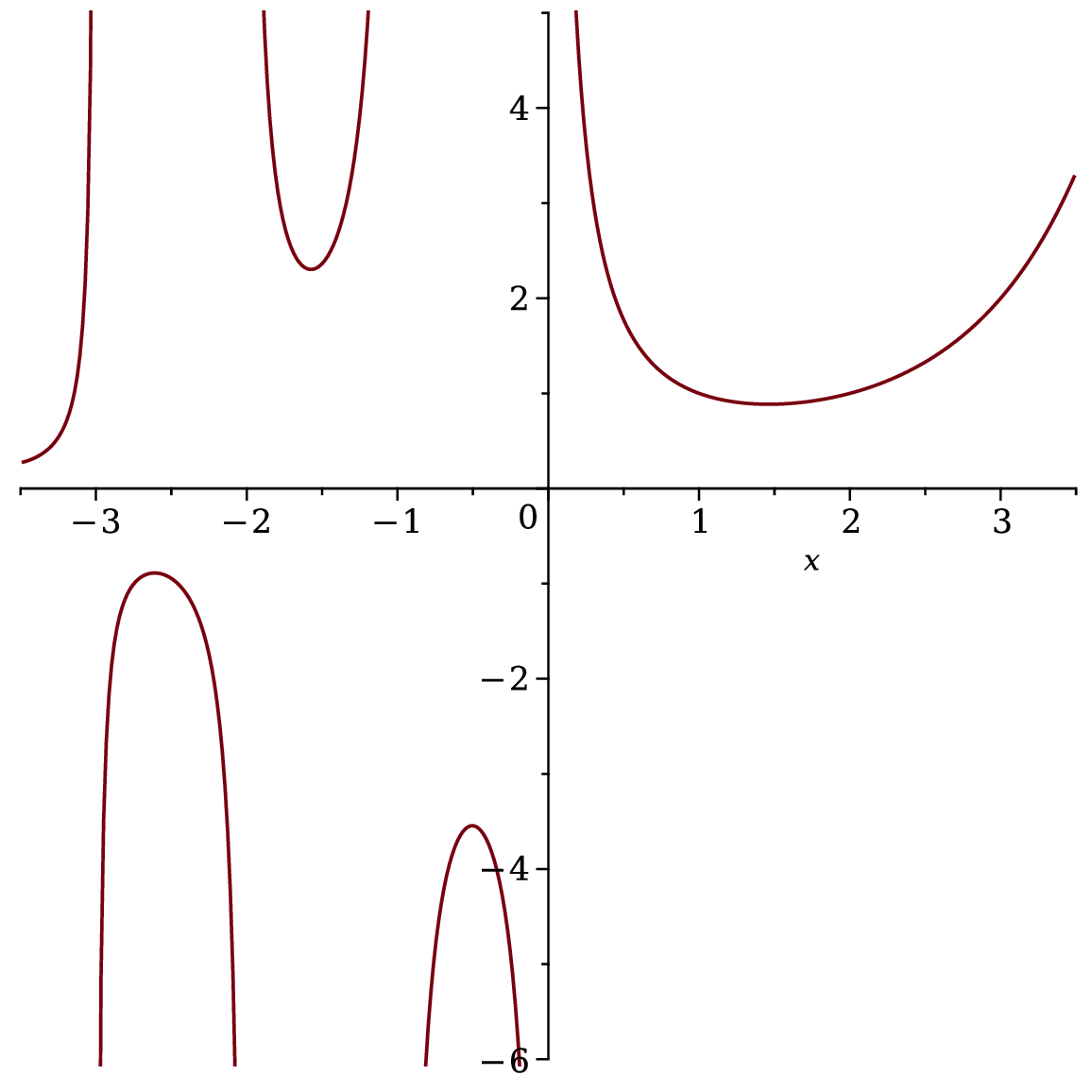}
 \caption{$\Gamma$ on the real line}
 \label{fig:gamm-plot-negative}
 \end{figure}

   \begin{defn}
    We let
    \begin{itemize}
     \item $x_k$ be the minimum point of $|\Gamma|$ on  $(-k,-k+1)$ for $k\geq 1$.
     \item $x_0$ be the minimum point of $\Gamma$ on the positive half-line.
    \end{itemize}

   \end{defn}


 \begin{prop}
  The mapping $\log \Gamma: \mathbb H\to \mathbb C$ is conformal and
  $$
  \log \Gamma(\mathbb H)=\mathcal V=\mathbb C\setminus \cup_{k=0}^{\infty} [\log|\Gamma(x_k)|,\infty)\times \{-ik\pi\}.
  $$
  The domain $\mathcal V$ is sketched in Figure \ref{fig:V}.
 \end{prop}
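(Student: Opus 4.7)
The plan is to prove both claims --- univalence of $\log \Gamma$ on $\mathbb H$ and identification of the image as $\mathcal V$ --- via a boundary-correspondence argument together with the argument principle. First I would check non-vanishing of the derivative: $(\log \Gamma)'(z) = \psi(z) = -\gamma + \sum_{k=0}^{\infty}\bigl(\nicefrac{1}{(k+1)} - \nicefrac{1}{(k+z)}\bigr)$ has strictly positive imaginary part on $\mathbb H$, since $\Im(-\nicefrac{1}{(k+z)}) = \Im z / |k+z|^2 > 0$ term by term. So $\psi$ has no zeros on $\mathbb H$ and $\log \Gamma$ is a local biholomorphism.

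Next I would catalogue the boundary behaviour on $\mathbb R = \partial \mathbb H$. On $(0,\infty)$ the function $\log \Gamma$ is real-valued, tends to $+\infty$ at both ends, and has a unique minimum $\log \Gamma(x_0)$ at $x_0$, so the corresponding piece of $\partial \mathbb H$ traces the ray $[\log \Gamma(x_0),\infty)\times\{0\}$ forward and back. On each $(-k,-k+1)$ with $k\geq 1$, the limit of $\log \Gamma$ from $\mathbb H$ equals $\log|\Gamma(t)|-ik\pi$, with $\log|\Gamma(t)|\to +\infty$ at both endpoints and a unique minimum $\log|\Gamma(x_k)|$ at $x_k$; this interval therefore traces the ray $[\log|\Gamma(x_k)|,\infty)\times \{-ik\pi\}$ forward and back. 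Near each pole $-k$ of $\Gamma$ one has $\log \Gamma(z) = -\log(z+k) + O(1)$, so a small upper semicircle $z = -k+\epsilon e^{i\theta}$, $\theta : 0\to \pi$, maps to a near-vertical arc at real part $\approx -\log\epsilon$ descending from imaginary part $-k\pi$ to $-(k+1)\pi$, linking consecutive slits at arbitrarily large real part. Stirling gives $\log \Gamma(z) = z\log z - z + O(\log|z|)$ as $|z|\to \infty$ in $\mathbb H$, so the image of the large upper semicircle $|z|=R$ closes the boundary curve at infinity in $\mathcal V$.

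With these boundary data in hand I would apply the argument principle on $\partial\bigl((\mathbb H\cap B(0,R))\setminus \bigcup_{k=0}^{N}\overline{B(-k,\epsilon)}\bigr)$ to $\log\Gamma(z)-w_0$ for each $w_0\in \mathcal V$. The number of preimages of $w_0$ inside the enclosed region equals the winding number of its image under $\log\Gamma$ around $w_0$. From the description above, this image hugs $\partial \mathcal V$ (covering each slit forward and back), is bridged on the right by the pole indentations, and is closed far out by the large-arc image; one verifies it winds exactly once around any $w_0\in \mathcal V$ and zero times around any $w_0$ lying on an excluded slit. Letting $R\to \infty$ and $\epsilon\to 0$ then gives exactly one preimage in $\mathbb H$ for every $w_0\in \mathcal V$, and combined with the first step this yields the conformal equivalence $\log \Gamma : \mathbb H\to \mathcal V$.

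The main obstacle is the winding-number bookkeeping: controlling the large-arc image via Stirling so that it contributes exactly the single wrap needed, and estimating the pole-indentation images so they sit at sufficiently large real part not to introduce spurious winding. Once orientations are tracked carefully and the monotonicity of $|\Gamma|$ between consecutive $x_k$'s is used to guarantee each slit is traversed back and forth, the remaining steps are routine.
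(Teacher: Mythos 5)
The paper states this proposition without proof (it defers to \cite{P-inverse-gamma}), so there is no internal argument to compare against. That said, your proposal contains the right opening move but then takes a much harder road than necessary.

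You correctly observe that $\Im\psi(z)>0$ on $\mathbb H$, i.e.\ the derivative $(\log\Gamma)'=\psi$ is itself a Nevanlinna--Pick function with values in the \emph{open} upper half-plane. But you use this only to conclude that $\psi$ is nonvanishing, hence that $\log\Gamma$ is a \emph{local} biholomorphism, and then you reach for an argument-principle / winding-number argument to promote local to global injectivity. This misses the punchline of your own observation: since $\mathbb H$ is convex and $\psi=(\log\Gamma)'$ takes values in a half-plane not containing the origin, the Noshiro--Warschawski criterion gives global univalence directly. Concretely, for $z_1\neq z_2$ in $\mathbb H$ one has
$$
\log\Gamma(z_2)-\log\Gamma(z_1)=(z_2-z_1)\int_0^1\psi\bigl(z_1+t(z_2-z_1)\bigr)\,dt,
$$
and the integral has strictly positive imaginary part, hence is nonzero. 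No contour, no poles, no winding numbers. Given how the paper sets this proposition up immediately after displaying the series for $\psi$ and in the middle of a course whose theme is exactly the Nevanlinna--Pick class, this is clearly the intended route.

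Your boundary catalogue (the real segment $(0,\infty)$ mapping to the ray $[\log\Gamma(x_0),\infty)$, each $(-k,-k+1)$ mapping to the ray $[\log|\Gamma(x_k)|,\infty)\times\{-ik\pi\}$, the pole indentations linking consecutive slits, and Stirling controlling the large arc) is correct, and once univalence is in hand this data identifies the image $\mathcal V$ by a straightforward boundary-correspondence argument. What I would flag as a genuine gap in the proposal as written is the winding-number step itself: with infinitely many slits, a contour that runs out to roughly $-R$ along the real axis passes near $O(R)$ poles, and the image curve therefore makes $O(R)$ excursions down to imaginary part $\approx -R\pi$ before the large-arc image must somehow return it to the starting point; controlling that the total winding about a fixed $w_0$ is exactly $1$ is not routine, and ``one verifies it winds exactly once'' hides the real work. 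You acknowledge this, but the honest conclusion is that the argument-principle route is not just bookkeeping-heavy --- it is doing in a hard way something your first paragraph already gives you for free.
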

\begin{figure}
 \includegraphics[scale=0.4]{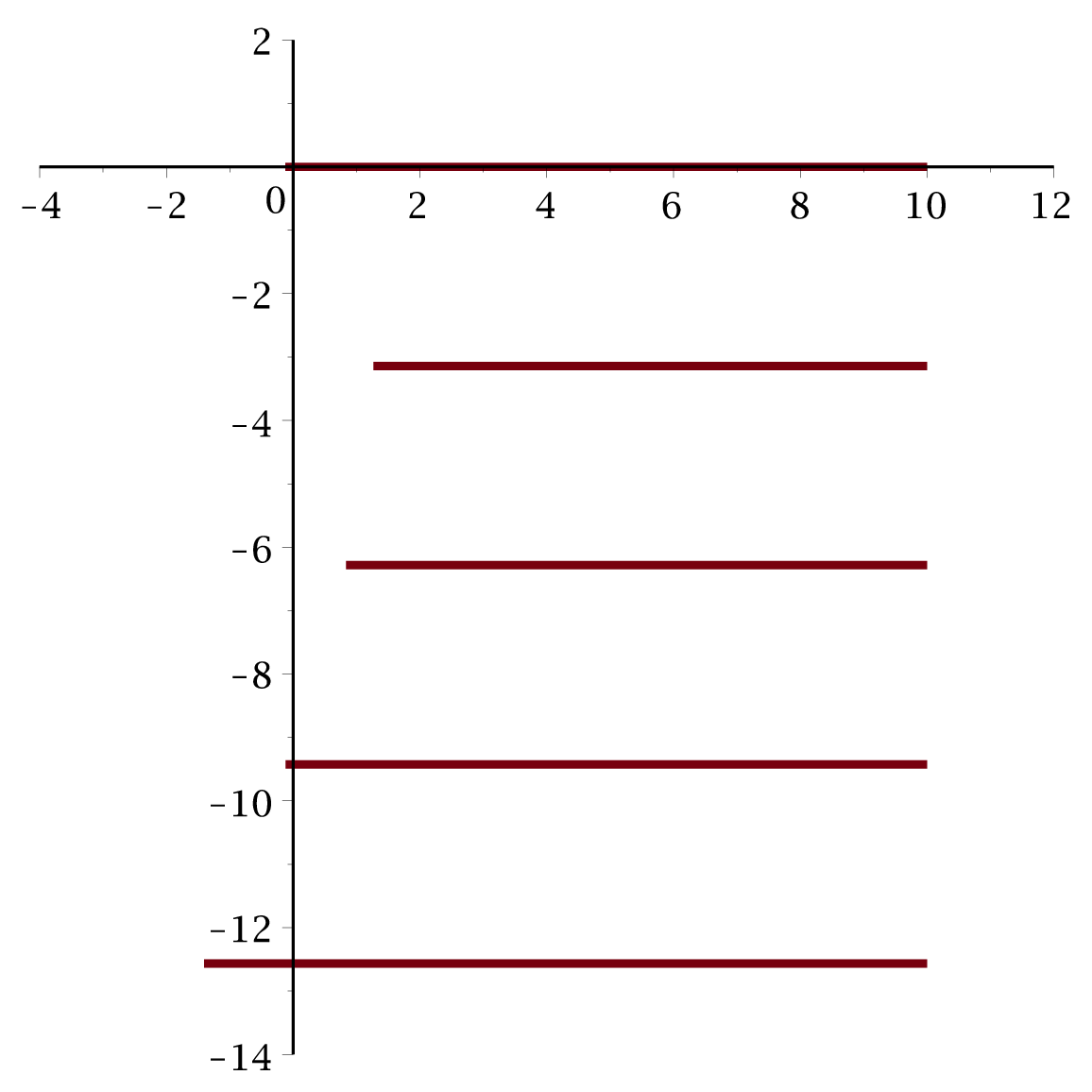}
 \caption{The domain $\log \Gamma (\mathbb H)$}
 \label{fig:V}
\end{figure}
The construction of inverses is now very natural: For $k\in \mathbb Z$ the function $z\mapsto \log z-i(k+1)\pi$ maps $\mathbb H$ to
$$
S_k=\{z\in \mathbb C| \Im z\in (-(k+1)\pi,-k\pi)\}\subseteq \mathcal V.
$$
Hence the function $g_k$ defined as
$$
g_k(z)=(\log \Gamma)^{-1}\left(\log z-i(k+1)\pi\right), \quad z\in \mathbb H
$$
maps $\mathbb H$ into $\mathbb H$ and is thus by definition a Nevanlinna--Pick function!
 \begin{prop}
The function $g_k$ is, for any $k\in \mathbb Z$, a Nevanlinna--Pick function and $$
\Gamma(g_k(z))=(-1)^{k+1}z
$$
for $z\in \mathbb H $.
\end{prop}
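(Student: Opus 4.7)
The plan is to write $g_k$ as a composition of three maps and track images through each step, then recover the identity $\Gamma(g_k(z)) = (-1)^{k+1} z$ by exponentiating.

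First I would unpack the definition. For $z \in \mathbb H$, the principal logarithm gives $\log z \in \{w : \Im w \in (0,\pi)\}$, so the affine shift $w \mapsto w - i(k+1)\pi$ sends it into the horizontal strip
\[
S_k = \{w \in \mathbb C : \Im w \in (-(k+1)\pi, -k\pi)\}.
\]
By the previous proposition, $\log\Gamma$ is a conformal bijection from $\mathbb H$ onto the slit domain $\mathcal V$; the text already notes $S_k \subseteq \mathcal V$. Hence the holomorphic inverse $(\log\Gamma)^{-1}$ is well defined on $S_k$, and its image $(\log\Gamma)^{-1}(S_k)$ is an open subset of $\mathbb H$. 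Composing, $g_k : \mathbb H \to \mathbb H$ is holomorphic, which is exactly the definition of a Nevanlinna--Pick function.

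Next I would derive the functional identity. By construction, $\log\Gamma(g_k(z)) = \log z - i(k+1)\pi$ for every $z \in \mathbb H$. Because $\log\Gamma$, as defined in \eqref{eq:logGamma}, is a holomorphic branch of the logarithm of $\Gamma$ on $\mathbb C \setminus (-\infty,0]$, exponentiating both sides yields
\[
\Gamma(g_k(z)) = \exp\bigl(\log z - i(k+1)\pi\bigr) = z \cdot e^{-i(k+1)\pi} = (-1)^{k+1} z,
\]
using that $g_k(z) \in \mathbb H \subset \mathbb C \setminus (-\infty, 0]$ so the identity $e^{\log\Gamma(w)} = \Gamma(w)$ is valid at $w = g_k(z)$.

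The only non-trivial step is the inclusion $S_k \subseteq \mathcal V$, which is what the conformal mapping statement before the proposition provides: $\mathcal V$ is obtained from $\mathbb C$ by removing horizontal rays at heights $-ik\pi$ for $k \geq 0$ starting at $\log|\Gamma(x_k)|$, and these rays sit on the boundaries between consecutive strips rather than inside $S_k$. Once this geometric picture is taken for granted, the proof is essentially definitional; the main conceptual content lies in the earlier description of $\log\Gamma(\mathbb H)$, not in the present proposition.
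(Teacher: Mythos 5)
Your proof is correct and follows exactly the route the paper takes: the Nevanlinna--Pick property comes from observing that $\log z - i(k+1)\pi$ lands in the strip $S_k \subseteq \mathcal V$ so that $(\log\Gamma)^{-1}$ sends it back into $\mathbb H$, and the functional identity follows by exponentiating $\log\Gamma(g_k(z)) = \log z - i(k+1)\pi$. The paper treats the first part in the text immediately preceding the proposition and gives only the one-line exponentiation as the formal proof, but the substance is identical to yours.
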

\begin{proof}
 $
\Gamma(g_k(z))=e^{\log \Gamma(g_k(z))}=e^{\log z-i(k+1)\pi}=(-1)^{k+1}z
$.
\end{proof}

\begin{prop}
The restriction of $\Gamma$ to $D_k=\{ z\in \mathbb H\, |\, \log \Gamma(z)\in S_k\}$ is conformal.
\end{prop}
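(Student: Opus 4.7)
The plan is to factor $\Gamma|_{D_k}$ as the composition $\exp \circ \log \Gamma$ and verify that each factor is a conformal bijection on the relevant domain. The identity $e^{\log\Gamma(z)} = \Gamma(z)$ is valid throughout the slit plane $\mathbb C \setminus (-\infty, 0]$ by the very definition of the holomorphic branch \eqref{eq:logGamma}, so this factorisation is legitimate.

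First, the preceding proposition asserts that $\log \Gamma : \mathbb H \to \mathcal V$ is a conformal bijection. Since $D_k = (\log \Gamma)^{-1}(S_k)$ by definition and $S_k$ is an open subset of $\mathcal V$, the restriction $(\log \Gamma)|_{D_k} : D_k \to S_k$ is itself a conformal bijection between open subsets of $\mathbb C$.

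Second, the exponential map restricted to the open horizontal strip $S_k$ is conformal onto its image. Indeed, $S_k$ has imaginary width $\pi$, which is strictly smaller than the period $2\pi$ of $\exp$, so $\exp|_{S_k}$ is injective; since $\exp$ has nowhere vanishing derivative, it is automatically a biholomorphism onto its image. A direct inspection of arguments shows $\exp(S_k) = \mathbb H$ when $k$ is odd and $\exp(S_k) = -\mathbb H$ when $k$ is even, consistent with the relation $\Gamma \circ g_k = (-1)^{k+1} \mathrm{id}_{\mathbb H}$ already established.

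Composing the two pieces yields $\Gamma|_{D_k} = \exp|_{S_k} \circ (\log \Gamma)|_{D_k}$ as a conformal bijection of $D_k$ onto the appropriate half-plane. Equivalently, combining the bijectivity of $g_k : \mathbb H \to D_k$ (which follows because $g_k$ is itself the composition of the conformal maps $z \mapsto \log z - i(k+1)\pi$ from $\mathbb H$ onto $S_k$ and $(\log\Gamma)^{-1}: S_k \to D_k$) with $\Gamma \circ g_k = (-1)^{k+1} z$ identifies $\Gamma|_{D_k}$ with $(-1)^{k+1} g_k^{-1}$, which is conformal as the inverse of a conformal map. The only point requiring attention is that $S_k$ has imaginary width strictly less than $2\pi$, ensuring injectivity of $\exp$ on $S_k$; everything else is formal once the earlier proposition on $\log \Gamma$ is in hand.
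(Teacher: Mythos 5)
Your proof is correct and rests on the same two facts as the paper's: conformality of $\log\Gamma$ on $\mathbb H$, and that a strip of imaginary width $\pi<2\pi$ determines arguments unambiguously (equivalently, $\exp$ is injective on $S_k$). The paper verifies injectivity of $\Gamma$ on $D_k$ directly---from $\Gamma(z_1)=\Gamma(z_2)$ it deduces $\arg\Gamma(z_1)=\arg\Gamma(z_2)+2\pi l$ with $l=0$ forced by $|\arg\Gamma(z_1)-\arg\Gamma(z_2)|<\pi$, then invokes conformality of $\log\Gamma$---while you package this identical reasoning as the factorisation $\Gamma|_{D_k}=\exp|_{S_k}\circ(\log\Gamma)|_{D_k}$.
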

\begin{proof}
 We notice that
$
D_k=\{ z\in \mathbb H\, |\, \arg \Gamma(z)\in (-(k+1)\pi,-k\pi)\}
$.
If $\Gamma(z_1)=\Gamma(z_2)$ for $ z_1,z_2\in D_k$ then $\log|\Gamma(z_1)|=\log|\Gamma(z_2)|$ and $\arg \Gamma(z_1)=\arg \Gamma(z_2)+2\pi l$ for some $l\in \mathbb Z$. Since $z_1,z_2\in D_k$, $|\arg \Gamma(z_1)-\arg \Gamma(z_2)|<\pi$ so that $l=0$ and  $\log \Gamma(z_1)=\log \Gamma(z_2)$. By conformality of $\log \Gamma$, $z_1=z_2$.
\end{proof}



%
%
%
\subsection{Barnes' $G$-function}
 Ernest William Barnes (1874--1953) was an English mathematician and scientist. Later he became a liberal theologian and bishop.
Barnes worked with various aspects of the gamma function. He introduced a hierarchy of so-called multiple (also called or multi- or higher order) gamma functions in the beginning of the twentieth century. There is also a more recent study by Ruijsenaars, in particular of asymptotic expansions, similar to Stirling's formula.

 We recall that the Gamma function satisfies the functional equation
 $\Gamma(z+1)=z\Gamma(z)$.
 Barnes considered a ``higher order'' functional equation
 \begin{equation}
  \label{eq:Barnes-functional}
  G(z+1)=\Gamma(z)G(z)
 \end{equation}
 and investigated solutions $G$ to this equation.

\begin{itemize}
 \item Existence: the following entire function $G$ is a solution to \eqref{eq:Barnes-functional}!
$$
G(z+1)=(2\pi)^{z/2}e^{-((1+\gamma)z^2+z)/2}\prod_{k=1}^{\infty}\left(
  1+\frac{z}{k}\right) ^ke^{-z+z^2/2k}.
$$
\item Uniqueness: The functional equation \eqref{eq:Barnes-functional}, $G(1)=1$ and $\partial_x^3(\log G)(x+1)\geq 0$ for $x>0$ determine $G$.
\end{itemize}
The uniqueness results is similar to the uniqueness of solutions to the functional equation of the Gamma function provided logarithmic convexity holds. (This is the famous Bohr-Mollerup theorem.)

 As we have seen the function $\nicefrac{\log \Gamma(1+z)}{z\log z}$ is a Nevanlinna--Pick function. Does something similar hold for $G$? The answer is given in the next result.

\begin{prop}
 We have the representation
 $$
 \frac{\log G(z+1)}{z^2\log z}=\frac{1}{2}-\int_0^{\infty}\frac{d(t)}{t+z}\, dt
 $$ for some positive function $d$ on the positive line.

\end{prop}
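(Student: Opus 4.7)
The strategy is to adapt the four-step plan used for $\log\Gamma(z+1)/(z\log z)$ in the first lecture: extend holomorphically to a cut plane, compute boundary values on the negative real axis, verify positivity of these boundary values together with sufficient growth control, and then read off the integral representation.

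The Weierstrass-type product for $G$ yields
$$
\log G(z+1) = \frac{z}{2}\log(2\pi) - \frac{(1+\gamma)z^2+z}{2} + \sum_{k=1}^\infty\left(k\log(1+z/k) - z + \frac{z^2}{2k}\right),
$$
which is holomorphic on $\mathbb{C}\setminus(-\infty,0]$; hence so is $F(z) := \log G(z+1)/(z^2\log z)$. For $t\in(-k,1-k)$ with $k\geq 1$, the only terms whose principal logarithm contributes an imaginary jump from $\mathbb{H}$ are those with $j<-t$, each contributing $j\pi$, so $\Im\log G(t+1+i0^+) = \pi k(k-1)/2$. Combined with $\log(t+i0^+) = \log|t|+i\pi$ and $z^2\to t^2>0$, a short computation gives
$$
\Im F(t+i0^+) = \frac{\pi}{t^2(\log^2|t|+\pi^2)}\left(\frac{k(k-1)}{2}\log|t| - \log|G(t+1)|\right).
$$

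The central step is to prove this boundary value is nonnegative, i.e.\ $|G(1-s)|\leq s^{k(k-1)/2}$ for $s\in(k-1,k)$. I would attack this inductively in $k$: iterating the functional equation $G(z+1)=\Gamma(z)G(z)$ downward expresses $G(1-s)$ as a ratio of $G$ evaluated on $(0,1)$ and a product of $\Gamma$ at shifted negative arguments, which the reflection formula recasts as $\Gamma$ values on $(0,1)$; the desired inequality then reduces to elementary estimates exploiting the log-convexity of $G$ (equivalently, the Bohr--Mollerup-type condition $\partial_x^3\log G(x+1)\geq 0$). I expect this to be the main technical obstacle, being the analogue of the positivity step for $\log\Gamma$ but with quadratic rather than linear multiplicities. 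Combined with the Stirling-type asymptotic $\log G(z+1) = (z^2/2)\log z + O(z^2)$, which shows $F(z)\to 1/2$ as $|z|\to\infty$ in $\mathbb{H}$ uniformly in subsectors bounded away from the cut, the Phragm\'en--Lindel\"of principle yields $\Im F\geq 0$ throughout $\mathbb{H}$, so $F\in\mathcal{N}_{(-\infty,0]}$.

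Once $F\in\mathcal{N}_{(-\infty,0]}$, the function $1/2 - F$ extends holomorphically to $\mathbb{C}\setminus(-\infty,0]$ with $\Im(1/2-F)\leq 0$ in $\mathbb{H}$ and is nonnegative on $(0,\infty)$ (the latter since $F(x)\nearrow 1/2$, as can be read off from the measure on the negative axis). The characterization of Stieltjes functions then gives
$$
\frac{1}{2} - F(z) = c + \int_0^\infty \frac{d\sigma(t)}{t+z},
$$
and the limit $F(x)\to 1/2$ as $x\to\infty$ forces $c=0$. Finally, since the boundary limits $\Im F(t+i0^+)$ converge uniformly on compact subsets of $(-\infty,0)\setminus\{-1,-2,\ldots\}$, Stieltjes inversion identifies $d\sigma = d(t)\,dt$ with
$$
d(s) = \frac{1}{s^2(\log^2 s+\pi^2)}\left(\frac{k(k-1)}{2}\log s - \log|G(1-s)|\right),\qquad s\in(k-1,k),\ k\geq 1,
$$
which is positive on each subinterval by the inequality established in the central step.
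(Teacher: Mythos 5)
Your overall strategy is correct and is precisely the five-step Nevanlinna--Pick program that the paper spells out for $\log\Gamma(z+1)/(z\log z)$, transplanted to Barnes' $G$. (The paper in fact gives no proof of this proposition at all; it is stated and attributed to the literature, so the paper's outline for the $\Gamma$-analogue is the only internal template to compare against.) Your boundary computation is right: from the Weierstrass product, $\Im\log G(t+1+i0^+)=\pi\sum_{j=1}^{k-1}j=\pi k(k-1)/2$ for $t\in(-k,1-k)$, and dividing by $z^2\log z\to t^2(\log|t|+i\pi)$ gives exactly the formula you wrote for $\Im F(t+i0^+)$ and for the density $d$ on $(k-1,k)$. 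Passing from nonnegativity of $\Im F$ on the boundary plus $F\to\tfrac12$ at infinity to $F\in\mathcal N_{(-\infty,0]}$ via Phragm\'en--Lindel\"of, and then recognising $\tfrac12-F$ as a Stieltjes function with $c=0$, is also the intended route.

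The genuine gap is the step you yourself flag as the ``main technical obstacle'': the inequality $|G(1-s)|\leq s^{k(k-1)/2}$ for $s\in(k-1,k)$, $k\geq1$. You describe a plan (iterate $G(z+1)=\Gamma(z)G(z)$ downward, apply the reflection formula, invoke $\partial_x^3\log G\geq0$), but you do not carry it out, and it is not at all evident that these ingredients combine to give the quadratic exponent $k(k-1)/2$ exactly. Even the base case $k=1$, namely $G(u)\leq1$ on $(0,1)$, is not immediate and needs an argument of its own; for $k\geq2$ one has $|G(1-s)|=G(k-s)/\prod_{j=1}^{k-1}|\Gamma(j-s)|$, which mixes $G$ on $(0,1)$ with reflected $\Gamma$-values, and the bookkeeping required to recover $s^{k(k-1)/2}$ is the real content of the proposition. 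Without this step the Phragm\'en--Lindel\"of argument has no boundary hypothesis to apply, so what you have is a correct plan with correct formulas, not yet a proof. A secondary loose end, worth a sentence once the main step is done, is the integrability of $d$ near $s=0$ (where $d(s)=O(1/(s\log^2 s))$) and near $s=k$ (where $-\log|G(1-s)|$ has a logarithmic singularity coming from the order-$(k-1)$ zero of $G$ at $1-k$), both of which are fine but should be checked.
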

A similar but more complicated statement holds for a so-called  triple gamma function. See \cite{Das}.


%
%
%

\subsection{Multiple gamma functions}
Let us briefly go through the construction of the so-called multiple (or higher order) gamma functions, following the procedure given in Ruijsenaars paper. To motivate the procedure we first consider a classical relation between the Gamma function and the  Hurwitz zeta function.  

 Hurwitz' zeta function (which is the famous Riemann zeta function when $x=1$) is defined by
$$
\zeta(s,x)=\sum_{n=0}^{\infty}\frac{1}{(n+x)^s}=\frac{1}{\Gamma(s)}\int_0^{\infty}\frac{e^{-xt}t^{s-1}}{1-e^{-t}}\, dt.
$$
For $x>0$, one may prove that the function $s\mapsto \zeta(s,x)$ has a holomorphic extension to $\mathbb C\setminus \{1\}$ with a simple pole at $s=1$ and Lerch's theorem states that
$$
\partial_s\zeta(s,x)_{s=0}=\log \Gamma(x)-\log \sqrt{2\pi}.
$$


The multiple zeta
function $\zeta_N$ is defined as
\begin{align*}
\zeta_N(z,w)&=\frac{1}{\Gamma(s)}\int_0^{\infty}\frac{1}{(1-e^{-t})^N}e^{-wt}t^{z-1}\, dt\\
&=\sum_{m_1,\ldots,m_N=0}^{\infty}(w+m_1+\ldots+m_N)^{-z}
\end{align*}
for $\Re z>N$ and $\Re w>0$.
It turns out that the function $z\mapsto \zeta_N(z,w)$ has a meromorphic extension to $\mathbb C$ with
simple poles only at $z=1,\ldots,N$. The multiple gamma function $\Gamma_N$ is then defined in terms of the function
$$
\log \Gamma_N(w)= \partial _z\zeta_N(z,w)|_{z=0}.
$$
In this way, $\Gamma_1(w)=\Gamma(w)/\sqrt{2\pi}$.
and $\Gamma_2(w)=(2\pi)^{w/2}/G(w)$.



 We shall focus on an investigation of the remainders in asymptotic expansions of the functions $\log \Gamma_N$. In order to do so we introduce the multiple Bernoulli polynomials as follows.
\begin{defn}
The multiple Bernoulli polynomials $B_{N,k}(x)$ are defined by
$$
\frac{t^Ne^{xt}}{(e^{t}-1)^N}=
\sum_{k=0}^{\infty}B_{N,k}(x)\frac{t^k}{k!}, \quad |t|<2\pi
$$
and the multiple Bernoulli numbers by
$B_{N,k}=B_{N,k}(0)$.
\end{defn}
These polynomials appear in an asymptotic expansion of $\log \Gamma_N$ due to Ruijsenaars, which we state in the next proposition.
\begin{prop}
 [Asymptotic expansion of $\log \Gamma_N$]
 For $\Re w>0$,
\begin{align*}
\log \Gamma_N(w)&= \frac{(-1)^{N+1}B_{N,N}(w)}{N!}\log w +(-1)^N\sum_{k=0}^{N-1}\frac{B_{N,k}w^{N-k}}{k!(N-k)!}\sum_{l=1}^{N-k}l^{-1}\\
&+\sum_{k=N+1}^m\frac{(-1)^kB_{N,k}w^{N-k}}{k!}(k-N-1)!+
 R_{N,m}(w),\quad m\geq N,
\end{align*}
where
$$
 R_{N,m}(w)= \int_{0}^{\infty}\frac{e^{-wt}}{t^{N+1}} \underbrace{\left(\frac{t^N}{(1-e^{-t})^N}-
    \sum_{k=0}^{m}\frac{(-1)^k}{k!}B_{N,k}t^k\right)} \, dt.
    $$
\end{prop}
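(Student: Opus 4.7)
The plan is to start from the defining integral representation
$$
\zeta_N(z,w)=\frac{1}{\Gamma(z)}\int_0^{\infty}\frac{e^{-wt}\,t^{z-1}}{(1-e^{-t})^N}\,dt,\qquad \Re z>N,
$$
and to exploit the Taylor expansion of the entire function $t\mapsto t^N/(1-e^{-t})^N$ about the origin. Substituting $t\mapsto -t$ in the generating function defining $B_{N,k}(x)$ (with $x=0$) yields
$$
\frac{t^N}{(1-e^{-t})^N}=\sum_{k=0}^{m}\frac{(-1)^{k}B_{N,k}}{k!}\,t^{k}+E_m(t),
$$
where $E_m$ is entire with $E_m(t)=O(t^{m+1})$ as $t\to 0$ and dominated by $t^N$ as $t\to\infty$. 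Inserting this decomposition into the integral and evaluating the polynomial pieces by the gamma integral gives
$$
\zeta_N(z,w)=\sum_{k=0}^{m}\frac{(-1)^{k}B_{N,k}}{k!}\,\frac{\Gamma(z-N+k)}{\Gamma(z)\,w^{z-N+k}}+\frac{1}{\Gamma(z)}\int_0^{\infty}e^{-wt}\,t^{z-N-1}E_m(t)\,dt,
$$
the quotients $\Gamma(z-N+k)/\Gamma(z)$ furnishing the meromorphic continuation in $z$ past the line $\Re z=N$.

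The expansion is then obtained by computing $\partial_z$ at $z=0$ termwise. For $0\le k\le N-1$ the functional equation reduces the quotient to $1/((z-1)\cdots(z-(N-k)))$, analytic at $z=0$; for $k=N$ it is identically $1$; for $k\ge N+1$ the factor $1/\Gamma(z)$ contributes a simple zero, so only its derivative at $0$ (equal to $1$) survives. A logarithmic differentiation shows that the terms with $k\le N-1$ yield a $-\log w$ piece together with a harmonic-number piece $H_{N-k}=\sum_{l=1}^{N-k}l^{-1}$; the $k=N$ term yields only $-\log w$; and the $k\ge N+1$ terms yield $(k-N-1)!\,w^{N-k}$, which is exactly the third sum in the proposition. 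The harmonic-number pieces assemble immediately into the second sum, and the $\log w$ coefficients from $k=0,\ldots,N$ combine through the polynomial identity
$$
B_{N,N}(w)=\sum_{k=0}^{N}\binom{N}{k}B_{N,k}\,w^{N-k},
$$
obtained by multiplying the generating series of $B_{N,k}(w)$ and $B_{N,k}$ and comparing the $t^N$ coefficients, to produce the leading $(-1)^{N+1}B_{N,N}(w)\log w/N!$.

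Finally, the remainder integral is holomorphic in $z$ on $\Re z>N-m-1$ (hence at $z=0$ whenever $m\ge N$), and because $1/\Gamma(z)$ has a simple zero at the origin whose derivative equals $1$, the operation $\partial_z\bigl|_{z=0}$ simply strips off the $\Gamma(z)^{-1}$ factor and sets $z=0$ inside the integral, producing $R_{N,m}(w)$ verbatim. The main obstacle is the bookkeeping around the $\log w$ coefficient: one must be confident that the rational factors arising from $\Gamma(z-N+k)/\Gamma(z)$ for $k<N$ combine with the trivial $k=N$ contribution, via the Bernoulli convolution above, into $B_{N,N}(w)/N!$ rather than an unsimplified sum of Bernoulli numbers; everything else is careful indexing and sign-tracking.
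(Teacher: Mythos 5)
The paper states this proposition without proof, attributing the expansion to Ruijsenaars, so there is no in-text argument to compare against. Your derivation is correct and is essentially the standard one: insert the truncated Taylor expansion of $t^N/(1-e^{-t})^N$ into the integral defining $\zeta_N$, evaluate the polynomial pieces via $\int_0^\infty e^{-wt}t^{z-N-1+k}\,dt=\Gamma(z-N+k)\,w^{N-k-z}$, use the gamma quotients $\Gamma(z-N+k)/\Gamma(z)$ to continue meromorphically past $\Re z=N$, and differentiate term by term at $z=0$. All the bookkeeping checks out. For $0\le k\le N-1$ the quotient equals $1/\prod_{j=1}^{N-k}(z-j)$, whose logarithmic derivative produces both the $\log w$ piece and the harmonic number $H_{N-k}$; for $k=N$ the quotient is identically $1$ and only $\log w$ survives; for $k\ge N+1$ the simple zero of $1/\Gamma(z)$ annihilates the $\log w$ term, leaving $(k-N-1)!\,w^{N-k}$. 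The $\log w$ coefficients from $k=0,\dots,N$ recombine into $(-1)^{N+1}B_{N,N}(w)/N!$ via the convolution identity $B_{N,N}(w)=\sum_{k=0}^N\binom{N}{k}B_{N,k}\,w^{N-k}$, which you correctly justify by multiplying the generating series of $B_{N,k}(w)$ and of $B_{N,k}$. The remainder integral converges near $t=0$ when $\Re z>N-m-1$, so $m\ge N$ is precisely what makes it holomorphic at $z=0$; since $1/\Gamma(z)$ vanishes there with derivative $1$, the operation $\partial_z|_{z=0}$ simply strips this factor and sets $z=0$, giving $R_{N,m}(w)$.

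One minor imprecision: you say $E_m(t)$ is ``dominated by $t^N$ as $t\to\infty$''. For $m>N$ the subtracted polynomial has degree $m$, so $E_m(t)$ actually grows like $t^m$. This has no effect on the argument, since the factor $e^{-wt}$ with $\Re w>0$ already guarantees convergence at infinity for any polynomial growth; the only constraint governing the range of $m$ comes from the behaviour at $t=0$, which you handle correctly.
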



  Our aim is to investigate the behaviour of the remainders $R_{N,m}$. It seems to be a complicated problem and we could only do it for $N=2$ and $N=3$. In order to do this we shall consider the underbraced expression in some detail. It is simply equal to $f(t)-T_m(t)$ with $f(t)=(t/(1-e^{-t}))^N$ and $T_m$ being the $m$'th order Taylor polynomial of $f$ centered at $t=0$. Does this expression change sign on the positive real line? The standard Lagrange type remainder involving  $f^{(m+1)}(\xi)$ for a suitable $\xi>0$ seems inconclusive and another method is needed.

Let us start by considering the case $N=1$. In this situation the expansion is the classical Binet formula:
\begin{align*}
 \log \Gamma(z+1)&=\log \sqrt{2\pi}+(z+\nicefrac{1}{2})\log z-z+\sum_{k=2}^{m}\frac{(-1)^kB_{k}}{k(k-1)}\frac{1}{z^{k-1}}\\
 &+\int_0^{\infty}\frac{e^{-zt}}{t^2}\left(\frac{t}{1-e^{-t}}-
    \sum_{k=0}^{m}\frac{(-1)^k}{k!}B_{k}t^k\right)\, dt.
 \end{align*}
%
The next exercise furnishes an alternative expression for the integrand in the right hand side of this formula, implying positivity.
 \begin{exercise}
  Consider the function $f(z)=z/(1-e^{-z})$ and its Taylor polynomial $T_n$ of order $n$ centered at $z=0$. Let $w\in \mathbb C\setminus 2\pi i \mathbb Z$.
  \begin{enumerate}
   \item Find all poles and the corresponding residues of the function
   $g(z)=f(z)/((z-w)z^{n+1})$.
   \item Let $\mathcal F_N$ denote the boundary of $\{z\, |\, |\Re z|\leq N,\, |\Im z|\leq 2\pi (N+\nicefrac{1}{2})\}$
   traversed once in the counter clockwise direction.
   Show that
   $$
   \int_{\mathcal F_N}g(z)\,dz \to 0\quad \text{for}\ N\to \infty.
   $$
   \item Show that
   $$
   f(w)-T_n(w)=w^{n+2}\sum_{p=1}^{\infty}\frac{1}{(2\pi p)^n}\frac{1}{(2\pi p)^2+w^2}
   $$
   for $w>0$.

  \end{enumerate}

 \end{exercise}

Consider next the case $N=2$. The remainder takes the form
  $$
R_{2,m}(w)= \int_{0}^{\infty}\frac{e^{-wt}}{t^3}\left( \frac{t^2}{(1-e^{-t})^2}-
    \sum_{k=0}^{m}\frac{(-1)^k}{k!}B_{2,k}t^k\right) \, dt.
$$
When $m$ is odd the integrand in general changes sign along the positive line, but when $m$ even there is more regularity: the integrand does not change sign.
This follows from the result below.

\begin{prop}
 For $m\geq 1$ we have
$$
(-1)^{m-1}R_{2,2m}(w)=\int_{0}^{\infty}e^{-wt}t^{2m-2}\nu_m(t) \,
dt,\quad \Re w>0,
$$
where
\begin{align*}
\nu_m(t)&=
\sum_{k=1}^{\infty}(2\pi k)^{1-2m}\left( \frac{4\pi k}{t^2+(2\pi
    k)^2}+\frac{8\pi kt}{(t^2+(2\pi k)^2)^2}+\right.\\
    &\phantom{=} \left.\frac{(2m-1)}{2\pi k}\frac{2t}{t^2+(2\pi
    k)^2}\right).
\end{align*}
\end{prop}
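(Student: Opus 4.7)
The plan is to mimic the contour-integration identity derived in the exercise for $N=1$, where the simple poles of $z/(1-e^{-z})$ at $2\pi i k$ produced a sum of terms of the form $1/((2\pi k)^2+w^2)$. For $N=2$, the same function squared has \emph{double} poles at those points, and this accounts for the structure of $\nu_m$: the residue will decompose into a simple part (giving the $1/((2\pi k)^2+t^2)$ terms) and a derivative part (giving the $1/((2\pi k)^2+t^2)^2$ term), together with a lower-order correction.

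First I would set $g(z) = f(z)/((z-w)z^{n+1})$ with $f(z) = (z/(1-e^{-z}))^2$ and $n = 2m$, then apply the residue theorem on the contour $\mathcal F_N$ introduced in the preceding exercise. A uniform bound $|f(z)| = O(1)$ on $\mathcal F_N$ away from the imaginary poles gives $\int_{\mathcal F_N} g(z)\,dz \to 0$, so residues sum to zero. The residue at $z = w$ is $f(w)/w^{n+1}$; the residue at $z=0$ (a pole of order $n+1$) equals $-T_n(w)/w^{n+1}$ by extracting the coefficient of $z^n$ in $f(z)/(z-w)$. The key computation is at the double pole $c_k := 2\pi i k$: from the Laurent expansion $z/(1-e^{-z}) = c_k/(z-c_k) + (1+c_k/2) + O(z-c_k)$, squaring gives $f(z) = c_k^2/(z-c_k)^2 + (2c_k+c_k^2)/(z-c_k) + O(1)$, and combining with $\psi(z) := 1/((z-w)z^{n+1})$ produces
\[
\mathrm{Res}_{z=c_k} g = \frac{1}{c_k^{n-1}(c_k-w)} - \frac{n-1}{c_k^n(c_k-w)} - \frac{1}{c_k^{n-1}(c_k-w)^2}.
\]

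The next step is to pair $k$ with $-k$ for $k \geq 1$. Using $c_{-k} = -c_k$, $(c_k-w)(c_{-k}-w) = (2\pi k)^2 + w^2$, the parity relations $c_k^{n-1} = -c_{-k}^{n-1}$ (odd power) and $c_k^n = c_{-k}^n = (-1)^m(2\pi k)^n$ (even power), each of the three partial-fraction pieces becomes a real rational function of $w$ matching one of the three summands in the bracket defining $\nu_m$, all with a common prefactor $(-1)^m$. Combining with the identity $f(w) - T_{2m}(w) = -w^{2m+1}\sum_{k\neq 0}\mathrm{Res}_{c_k} g$ yields $f(w) - T_{2m}(w) = (-1)^{m-1} w^{2m+1}\nu_m(w)$; substituting into
\[
R_{2,2m}(w) = \int_0^{\infty}\frac{e^{-wt}}{t^3}\bigl(f(t) - T_{2m}(t)\bigr)\,dt
\]
and renaming the integration variable $t$ gives the claim.

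The hard part will be the symmetrization bookkeeping. In particular, the coefficient $(2m-1)$ appearing in the third summand of $\nu_m$ must arise as $n-1$ from the middle piece of the residue after $\pm k$ pairing, and the squared-denominator term must come from differentiating $1/(c_k-w)$ in the double-pole residue formula; the overall factor $(-1)^{m-1}$ must emerge from carefully tracking the parities of $c_k^{n-2}$ and $c_k^n$. These steps are mechanical but error-prone, and are where essentially all the care is required.
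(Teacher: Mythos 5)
Your proposal is correct and takes exactly the approach the paper indicates (it only says the proof ``follows the same lines as the exercise above,'' i.e.\ the residue computation with $g(z)=f(z)/((z-w)z^{n+1})$). I checked the Laurent expansion $z/(1-e^{-z})=c_k/(z-c_k)+(1+c_k/2)+O(z-c_k)$, the resulting residue at the double pole, the $\pm k$ pairing (all three pieces do acquire the common factor $(-1)^m$, with $c_k^{2m-1}=(-1)^{m-1}i(2\pi k)^{2m-1}$, $c_k^{2m}=(-1)^m(2\pi k)^{2m}$, $c_k^2-w^2=-((2\pi k)^2+w^2)$), the identity $\mathrm{Res}_0 g=-T_n(w)/w^{n+1}$, and the final substitution into $R_{2,2m}$; all are consistent with the stated $\nu_m$. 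One small inaccuracy: on $\mathcal F_N$ you do not have $|f(z)|=O(1)$ uniformly --- on the right edge $\Re z=N$ one has $f(z)\sim z^2$, so the correct bound is $|f(z)|=O(|z|^2)$, giving $|g(z)|=O(|z|^{-n})$ and hence $\int_{\mathcal F_N}g\,dz=O(N^{1-n})\to 0$ precisely because $n=2m\geq 2$; this is exactly the point where the $N=2$ case needs one power of $|z|$ more in the denominator than the $N=1$ exercise, so it is worth stating the bound carefully.
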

The proof of this alternative form of the integrand follows the same  lines as the exercise above.

By differentiation, denoted $\partial_x$, we observe the regular sign behaviour of the remainders themselves.
\begin{prop}
 For $m\geq 1$ and $k\geq 0$,
 $$
 (-1)^k\partial_x^k\, (-1)^{m-1}R_{2,2m}(x)>0\quad \text{for} \ x>0.
 $$
\end{prop}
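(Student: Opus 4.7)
The plan is to reduce the statement to differentiation under the integral sign in the representation
$$(-1)^{m-1}R_{2,2m}(x)=\int_{0}^{\infty}e^{-xt}\,t^{2m-2}\,\nu_m(t)\,dt$$
supplied by the previous proposition, combined with the manifest positivity of $\nu_m$ on $(0,\infty)$.

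First I would observe that $\nu_m(t)>0$ for every $t>0$: inside the series defining $\nu_m$, each of the three summands
$$\frac{4\pi k}{t^2+(2\pi k)^2},\qquad \frac{8\pi k t}{(t^2+(2\pi k)^2)^2},\qquad \frac{2m-1}{2\pi k}\cdot\frac{2t}{t^2+(2\pi k)^2}$$
is strictly positive for $t>0$ and $k\geq 1$, and the outer factor $(2\pi k)^{1-2m}$ is positive. To see that the series actually converges to a continuous function, I would uniformly bound each summand on compact subsets of $(0,\infty)$ by $C_T\,k^{-2m}$; since $2m\geq 2$, the comparison series is summable, so $\nu_m$ is continuous and strictly positive on $(0,\infty)$, bounded on every finite interval, and (by estimating the three terms for large $t$) decays at least like $O(1/t)$ as $t\to\infty$.

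Next I would justify differentiating under the integral sign. Fix any $x_0>0$. For $x\geq x_0$ and any $j\geq 0$, the integrand $(x,t)\mapsto e^{-xt}t^{2m-2+j}\nu_m(t)$ is dominated in absolute value by the integrable function $e^{-x_0 t}\,t^{2m-2+j}\,\nu_m(t)$: integrability near $0$ is clear because $\nu_m$ is bounded there and $2m-2+j\geq 0$, while integrability near $\infty$ follows from the exponential factor swallowing any polynomial in $t$. The standard dominated-convergence criterion then gives, by induction on $k$,
$$\partial_x^k\bigl[(-1)^{m-1}R_{2,2m}(x)\bigr]=(-1)^k\int_0^\infty e^{-xt}\,t^{2m-2+k}\,\nu_m(t)\,dt,\qquad x>0.$$

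Multiplying both sides by $(-1)^k$ produces
$$(-1)^k\partial_x^k\bigl[(-1)^{m-1}R_{2,2m}(x)\bigr]=\int_0^\infty e^{-xt}\,t^{2m-2+k}\,\nu_m(t)\,dt,$$
whose right-hand integrand is a product of strictly positive factors on $(0,\infty)$, so the integral is strictly positive for every $x>0$, as claimed. The only thing that could honestly be called the \emph{hard part} has already been done in the previous proposition (the Mittag--Leffler/residue-style rewriting that exposes $\nu_m$); once that representation is in hand, the remaining work is just pulling $k$ factors of $-t$ out of the integral, and the sole ongoing concern is the uniform domination that legitimises differentiating arbitrarily often.
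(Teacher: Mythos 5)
Your proof is correct and follows exactly the route the paper intends: the paper's own ``proof'' is the single clause ``By differentiation'', applied to the integral representation $(-1)^{m-1}R_{2,2m}(x)=\int_0^\infty e^{-xt}t^{2m-2}\nu_m(t)\,dt$ established in the preceding proposition, and you simply carry out that differentiation under the integral sign and verify the dominated-convergence hypotheses needed to justify it. The positivity of each term in the series defining $\nu_m$, the $O(k^{-2m})$ bound that makes the series locally uniformly convergent, and the exponential domination by $e^{-x_0 t}$ are all the right ingredients, so nothing is missing.
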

In the case of $N=3$ there is a similar, but more complicated,  result for the even remainders.


\begin{prop}
 For $m\geq 6$, the integrand in
 $$
(-1)^mR_{3,2m}(x)= \int_{0}^{\infty}\frac{e^{-xt}}{t^4} (-1)^m\left(\frac{t^3}{(1-e^{-t})^3}-
    \sum_{k=0}^{2m}\frac{(-1)^k}{k!}B_{3,k}t^k\right) \, dt
$$
is positive.
\end{prop}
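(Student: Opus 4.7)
The plan is to extend the residue–calculus method of the preceding exercise (case $N=1$) and proposition (case $N=2$) to the function $f(z)=(z/(1-e^{-z}))^3$, which is holomorphic on $\mathbb{C}$ except for triple poles at $\zeta_k=2\pi i k$, $k\in \mathbb{Z}\setminus\{0\}$. Since $e^{-xt}/t^4>0$ for $t>0$, it suffices to prove that
$$(-1)^m\bigl(f(t)-T_{2m}(t)\bigr)>0\quad \text{for all } t>0,\ m\geq 6,$$
where $T_{2m}$ is the degree-$2m$ Taylor polynomial of $f$ at the origin; this matches the bracketed expression in the integrand, since its Taylor coefficients are exactly $(-1)^k B_{3,k}/k!$.

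First I would apply Cauchy's theorem to $G(z)=f(z)/((z-w)z^{2m+1})$ on the contour $\mathcal{F}_N$ of the exercise, checking $\int_{\mathcal{F}_N} G\to 0$ as $N\to\infty$ via the usual polynomial estimate. The residue at $z=w$ is $f(w)/w^{2m+1}$ and the residue at $z=0$ is $-T_{2m}(w)/w^{2m+1}$, so vanishing of the total residue sum yields
$$f(w)-T_{2m}(w)=-w^{2m+1}\sum_{k\neq 0}\mathrm{Res}_{z=\zeta_k} G(z).$$
Each triple-pole residue equals $\tfrac{1}{2}\,\tfrac{d^2}{dz^2}\bigl[(z-\zeta_k)^3 G(z)\bigr]\big|_{z=\zeta_k}$. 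Writing $1-e^{-z}=(z-\zeta_k)\psi(z-\zeta_k)$ with $\psi(u)=1-u/2+u^2/6-\cdots$ independent of $k$, we have $(z-\zeta_k)^3 f(z)=z^3/\psi(z-\zeta_k)^3$, and the Leibniz formula splits the second derivative into six pieces, according to how the two derivatives are distributed among the factors $z^{2-2m}$, $\psi(z-\zeta_k)^{-3}$, and $(z-w)^{-1}$. After pairing $\zeta_k$ with $\zeta_{-k}$, everything collapses to a real series of the form
$$(-1)^m\bigl(f(t)-T_{2m}(t)\bigr)=t^{2m+1}\sum_{k=1}^{\infty}\frac{P_m(t,k)}{(2\pi k)^{2m-2}\bigl(t^2+(2\pi k)^2\bigr)^3},$$
directly analogous to the $N=2$ formula, where $P_m(t,k)$ is a polynomial in $t$ and $1/k^2$ whose coefficients depend on $m$ through terms at most quadratic in $m$.

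The main obstacle is the last step: establishing $P_m(t,k)>0$ uniformly in $t>0$ and $k\geq 1$ for $m\geq 6$. In the $N=2$ case the analogous $\nu_m(t)$ was manifestly a sum of positive terms, but here the first and second Taylor coefficients of $\psi^{-3}$ contribute fixed-sign corrections that can, for small $m$, overwhelm the dominant contribution. That dominant piece is the quadratic-in-$m$ term produced when both derivatives fall on $z^{2-2m}$, which carries the favourable sign because $\zeta_k^{-2m}=(-1)^m(2\pi k)^{-2m}$ absorbs the outer factor $(-1)^m$. I would isolate this leading piece and bound each of the five subleading (linear-in-$m$ or $m$-independent) contributions in absolute value, showing that the quadratic term dominates uniformly in $k\geq 1$ and $t>0$ once $m\geq 6$; the precise threshold arises because the worst subdominant contribution is comparable in size to the leading piece at exactly $m=5$. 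With $P_m(t,k)\geq 0$ in hand, the displayed identity yields $(-1)^m(f(t)-T_{2m}(t))>0$ for $t>0$, so the integrand in the definition of $(-1)^m R_{3,2m}(x)$ is positive, as claimed.
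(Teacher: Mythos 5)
Your structural outline correctly extends the residue method suggested by the paper (the $N=1$ exercise and the stated $N=2$ proposition): the triple poles of $(z/(1-e^{-z}))^3$ at $\zeta_k=2\pi ik$, the decomposition of each residue via the Leibniz rule into six pieces, the pairing of $\zeta_k$ with $\zeta_{-k}$ to produce a real series, and the identification of $(-1)^m$ as coming from $\zeta_k^{-2m}=(-1)^m(2\pi k)^{-2m}$ in the piece where both derivatives land on $z^{2-2m}$. Indeed, as you note, not all six pieces share the favourable sign (the piece with no derivative on $z^{2-2m}$ contributes $\zeta_k^{2-2m}=-(-1)^m(2\pi k)^{2-2m}$, so after multiplication by the outer $(-1)^m$ it is negative), so the $N=3$ case genuinely requires comparison of magnitudes rather than manifest term-by-term positivity as in $N=2$.

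That said, the proposal stops at precisely the point where the proposition actually needs to be proven. You write down the general shape $(-1)^m(f(t)-T_{2m}(t))=t^{2m+1}\sum_k P_m(t,k)/\bigl((2\pi k)^{2m-2}(t^2+(2\pi k)^2)^3\bigr)$ and then assert — without deriving $P_m$, computing the coefficients coming from the Taylor expansion of $\psi^{-3}$, or producing any explicit estimate — that the quadratic-in-$m$ piece dominates the others once $m\geq 6$, with the cutoff "arising" because the competitors are comparable at $m=5$. This is a description of a proof you hope exists, not a proof: the threshold $m\geq 6$ is sensitive to the exact constants in each of the six Leibniz contributions, and nothing in the sketch rules out, for instance, that the bound degrades as $t\to 0$ or $t\to\infty$ in a way that no finite $m$ repairs, or that the true threshold is some other integer. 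To close the gap you would need to (a) compute $\psi^{-3}(u)=1+\tfrac{3}{2}u+\tfrac{13}{12}u^2+\cdots$ and hence the explicit expressions for all six pieces after pairing, (b) produce an explicit lower bound for the leading piece and explicit upper bounds for the modulus of each of the other five, uniformly in $t>0$ and $k\geq 1$, and (c) verify that the inequality among these holds exactly for $m\geq 6$. Without that quantitative work, the positivity claim — the entire content of the proposition — is unsupported.
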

As before we immediately obtain the following sign behaviour of the remainders.
\begin{prop}
 For $m\geq 6$ and $k\geq 0$,
 $$
 (-1)^k\partial_x^k\, (-1)^{m}R_{3,2m}(x)>0\quad \text{for} \ x>0.
 $$
\end{prop}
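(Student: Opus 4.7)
The plan is to recognize the representation in the preceding proposition as a Laplace transform of a positive function and then differentiate under the integral sign. Set
$$
\rho_m(t):=\frac{(-1)^m}{t^4}\left(\frac{t^3}{(1-e^{-t})^3}-\sum_{k=0}^{2m}\frac{(-1)^k}{k!}B_{3,k}t^k\right),\quad t>0.
$$
For $m\geq 6$ the preceding proposition gives $\rho_m(t)>0$ on $(0,\infty)$, and the formula for the remainder reads
$$
(-1)^m R_{3,2m}(x)=\int_0^{\infty}e^{-xt}\rho_m(t)\,dt.
$$

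Granted this representation, formal differentiation $k$ times in $x$ yields
$$
\partial_x^k\,(-1)^m R_{3,2m}(x)=(-1)^k\int_0^{\infty}t^k e^{-xt}\rho_m(t)\,dt,
$$
and since $\rho_m>0$ on $(0,\infty)$ the integral on the right is strictly positive. Hence $(-1)^k\partial_x^k\,(-1)^m R_{3,2m}(x)>0$ for $x>0$, which is exactly the assertion. Structurally this is just the familiar observation that a Laplace transform of a positive function is completely monotonic, so the real content sits entirely in the preceding proposition.

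The only work left is to justify differentiation under the integral. Near $t=0$, the bracket equals the tail $\sum_{k\geq 2m+1}(-1)^k B_{3,k}t^k/k!$ of the convergent Taylor series of $t^3/(1-e^{-t})^3$ (which has radius of convergence $2\pi$), so the bracket vanishes like $t^{2m+1}$; dividing by $t^4$ one obtains $\rho_m(t)=O(t^{2m-3})=O(t^9)$ at the origin for $m\geq 6$. Near $t=+\infty$ the function $t^3/(1-e^{-t})^3$ grows like $t^3$ while the Taylor polynomial grows like $t^{2m}$, so $\rho_m(t)$ is polynomially bounded in $t$. Consequently, fixing any compact interval $[x_0,x_1]\subset(0,\infty)$, the function $t\mapsto t^k e^{-x_0 t}\rho_m(t)$ is integrable on $(0,\infty)$ and dominates $|t^k e^{-xt}\rho_m(t)|$ uniformly for $x\in[x_0,x_1]$. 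Iterating, $k$-fold differentiation under the integral is legitimate on $(0,\infty)$.

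The main (and only nontrivial) obstacle is therefore the positivity of $\rho_m$ itself, which is the content of the previous proposition and has been imported; the remainder of the argument is a routine application of dominated convergence to a Laplace transform.
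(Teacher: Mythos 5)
Your proof is correct and follows the same route the paper intends: observe that $(-1)^m R_{3,2m}$ is the Laplace transform of the positive integrand from the preceding proposition, then differentiate under the integral sign to deduce complete monotonicity. Your added justification of the differentiation (the $O(t^{2m-3})$ behaviour at the origin and the polynomial bound at infinity) fills in details the paper omits, but the argument itself is the one the paper relies on.
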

 In the next lecture we shall investigate a new class of functions with derivatives alternating in sign.


\section{Fourth lecture: Complete monotonicity}
This lecture focuses on the class of completely monotonic functions and its relation with the class of Stieltjes functions.

The completely monotonic functions were introduced a little less than 100 years ago. A smooth function on the positive half-line is  completely monotonic if the $n$'th derivative is positive when $n$ is even, and otherwise negative.  Bernstein's theorem characterizes the completely monotonic functions in terms of the Laplace transform. References: \cite{S}.

The class of completely monotonic functions of positive order is introduced and as examples we mention remainders in asymptotic expansions of gamma functions. Also the class of logarithmically completely monotonic functions is investigated and its relation to generalized Stieltjes functions of positive order is described. We introduce the class of Horn-Bernstein functions in relation with a study  of
the elementary function $x\mapsto (1+\nicefrac{1}{x})^{ax}$.
References: \cite{BP-horn}.

 \begin{exercise}
  {Recap. Which of the next two statements are true? Which are false?}
  \begin{enumerate}
   \item The function
   $$
   \frac{t}{1-e^{-t}}-
    \sum_{k=0}^{m}\frac{(-1)^k}{k!}B_{k}t^k
    $$
    is positive on $(0,\infty)$.
    \item The inverse of Euler's Gamma function is a Nevanlinna--Pick function.
  \end{enumerate}

 \end{exercise}

\subsection{Completely monotonic functions}
\begin{defn}
 A $C^{\infty}$-function $f:(0,\infty)\to \mathbb R$ is completely monotonic (CM) if
 $$(-1)^nf^{(n)}(x)\geq 0\quad \text{for}\  n\geq 0 \ \text{and}\  x>0.
 $$
\end{defn}

\begin{example}
 The following functions are CM.
 \begin{enumerate}
  \item $f(x)=\nicefrac{1}{x}$,
  \item
  $
  f(x)=c+\int_0^{\infty}\frac{d\mu(t)}{(t+x)^{\lambda}}
  $ where $c,\mu\geq 0$, so $\mathcal S_{\lambda}\subseteq \text{CM}$,
  \item $f(x)=e^{-x}$.
 \end{enumerate}

\end{example}
\begin{exercise}

 Discuss how to prove that $f(x)=e^{-x}$ is not a generalized Stieltjes function.
\end{exercise}
\begin{exercise}
 Show that the product of two CM functions is again CM.
\end{exercise}

{Bernstein's theorem characterizes the completely monotonic function via the Laplace transform.}
\begin{defn}[Laplace transform]
The Laplace transform of a positive measure $\mu$ on $[0,\,\infty)$ is defined by
$$
\mathcal L(\mu)(x)=\int _{0}^{\infty} e^{-xt}\,d\mu(t).
$$
\end{defn}
\begin{prop}
 [Bernstein's theorem, Acta.\ Math.\ 1929]
A function $f$ is CM if and only if
$
f=
\mathcal{L}(\mu)
$
for a positive measure $\mu$ on $[0,\,\infty)$ and $\mu$ is uniquely determined by $f$.
\end{prop}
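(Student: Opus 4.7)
The easy direction is routine. Given $f=\mathcal{L}(\mu)$ with $\int_0^\infty e^{-xt}\,d\mu(t)<\infty$ for every $x>0$, differentiation under the integral sign is justified locally by the $\mu$-integrable dominator $t^n e^{-(x/2)t}$; it gives $f^{(n)}(x)=(-1)^n\int_0^\infty t^n e^{-xt}\,d\mu(t)$, so $(-1)^n f^{(n)}(x)\geq 0$. For uniqueness, suppose $\mathcal{L}(\mu)=\mathcal{L}(\nu)$ on $(0,\infty)$; fix $x_0>0$ and pass to the finite positive measures $e^{-x_0 t}\,d\mu(t)$ and $e^{-x_0 t}\,d\nu(t)$. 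Their Laplace transforms agree for all $s\geq 0$, so pushing forward to $[0,1]$ via $u=e^{-t}$ we obtain two finite positive measures with identical power moments; Stone--Weierstrass (polynomials dense in $C([0,1])$) forces them to coincide, hence $\mu=\nu$.

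The substantive ``only if'' direction I would reduce to the Hausdorff moment problem. Step~1: verify that the operator $g\mapsto -\Delta_h g$, with $\Delta_h g(x)=g(x+h)-g(x)$, preserves the cone of CM functions for any $h>0$; each $(-1)^n f^{(n)}$ is nonnegative and nonincreasing (since its derivative $-(-1)^{n+1}f^{(n+1)}$ is $\leq 0$), from which a short computation yields $(-1)^n(-\Delta_h f)^{(n)}\geq 0$. By induction, $(-1)^j\Delta_h^j f(x)\geq 0$ for all $j,h,x$. Step~2: fix $a>0$. The sequence $c_k=f((k+1)a)$ is bounded by $f(a)$ and discretely completely monotone, so by the Hausdorff moment theorem there is a finite positive measure $\nu_a$ on $[0,1]$ with $c_k=\int_0^1 t^k\,d\nu_a(t)$. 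The substitution $t=e^{-as}$ followed by the re-weight $d\lambda_a(s)=e^{as}\,d\rho_a(s)$ (where $\rho_a$ is the pushforward of $\nu_a$) converts this into
$$
f(x)=\int_0^\infty e^{-xs}\,d\lambda_a(s),\qquad x\in a\mathbb{N}.
$$

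Step~3 (assembly): apply Step~2 along $a=1/m$, $m\in\mathbb{N}$, obtaining $f=\mathcal{L}(\lambda_{1/m})$ on the dense set $(1/m)\mathbb{N}\subset(0,\infty)$. Fix $x_0=2$, so that $x_0\in(1/m)\mathbb{N}$ for every $m$; evaluation at $x=x_0$ shows that the weighted measures $e^{-x_0 s}\,d\lambda_{1/m}$ all have total mass $f(x_0)$, and by the vague-compactness proposition from Lecture~1 a subsequence converges vaguely to some positive measure $\nu$. Setting $d\mu(s)=e^{x_0 s}\,d\nu(s)$, passing to the limit in $f(x)=\int e^{-(x-x_0)s}\,e^{-x_0 s}\,d\lambda_{1/m}(s)$ for $x\in(1/m)\mathbb{N}\cap(x_0,\infty)$ gives $\mathcal{L}(\mu)(x)=f(x)$ on a dense subset of $(x_0,\infty)$, hence on all of $(x_0,\infty)$ by continuity.

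The main obstacle is this limit step: $e^{-(x-x_0)s}$ is continuous and vanishing at infinity but not compactly supported, so vague convergence alone does not transfer. The remedy is a tightness estimate: evaluating the Laplace identity at $x=1<x_0$ (valid for every $m$) gives $\int e^{-s}\,d\lambda_{1/m}=f(1)$, whence
$$
\bigl(e^{-x_0 s}\,d\lambda_{1/m}\bigr)\bigl([R,\infty)\bigr)\leq e^{-(x_0-1)R}\int e^{-s}\,d\lambda_{1/m}=e^{-R}f(1),
$$
uniformly in $m$, upgrading vague to weak convergence against every $\phi\in C_0([0,\infty))$. Finally, to obtain the identity on all of $(0,\infty)$, I would repeat Step~3 with $x_0/N$ in place of $x_0$ for $N\in\mathbb{N}$: the uniqueness half of the theorem, already proved, forces the resulting measures to agree with $\mu$ on the overlap $(x_0,\infty)$, so $\mathcal{L}(\mu)=f$ on $\bigcup_N(x_0/N,\infty)=(0,\infty)$.
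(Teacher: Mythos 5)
Your proof is correct in strategy but takes a genuinely different route from the paper. The paper's argument is a direct Widder-style construction: it applies Taylor's theorem with integral remainder centered at $a$, uses complete monotonicity to show the Taylor polynomial vanishes as $a\to\infty$, rewrites the remainder integral after the substitution $t=n/s$ as $\int_0^\infty(1-tx/n)_+^{n-1}f_n(t)\,dt$ with $f_n\,dt$ a probability measure, extracts a vague limit $\mu$, and passes to the limit using the uniform convergence $(1-tx/n)_+^{n-1}\to e^{-xt}$. Your proof instead \emph{discretizes}: it reduces to the Hausdorff moment problem for the sequences $f((k+1)a)$ via Step~1, obtains exponential representations along each lattice $a\mathbb{N}$, and then reassembles. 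The paper's approach is self-contained (modulo vague compactness); yours outsources the combinatorial work to the Hausdorff moment theorem, which is cleaner conceptually but relies on a result the paper only alludes to in Lecture~1. Both are classical.

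There is, however, a genuine gap in your Step~3. You extract a vaguely convergent subsequence $\sigma_{m_k}:=e^{-x_0 s}\,d\lambda_{1/m_k}$, and then want to pass to the limit in $f(x)=\int e^{-(x-x_0)s}\,d\sigma_{m_k}(s)$ for each fixed rational $x>x_0$. But this identity holds for the index $m_k$ only when $x\in(1/m_k)\mathbb{N}$, and nothing in the vague-compactness step guarantees the subsequence $\{m_k\}$ contains infinitely many terms divisible by the denominator of $x$. As written, the limit at a fixed $x$ cannot be taken. Two repairs are available. The quick one: work with $a=1/k!$ from the start; then for every rational $x>0$ one has $x\in(1/k!)\mathbb{N}$ for all large $k$, so the identity survives along any subsequence and the rest of your argument goes through. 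The cleaner one, which also renders the vague-compactness and tightness machinery unnecessary: use the uniqueness argument you already gave to show the measures $\lambda_{1/m}$ are all \emph{equal}. Indeed, given $m,m'$, both $\lambda_{1/m}$ and $\lambda_{1/m'}$ satisfy the Laplace identity on the common lattice $(1/\gcd(m,m'))\mathbb{N}$; weighting by $e^{-s/\gcd(m,m')}$ and pushing forward via $u=e^{-s/\gcd(m,m')}$ produces two measures on $(0,1]$ with the same moment sequence, hence equal by the density of polynomials, hence $\lambda_{1/m}=\lambda_{1/m'}$. Calling this common measure $\mu$, one has $f=\mathcal{L}(\mu)$ on $\mathbb{Q}_{>0}$ and then on $(0,\infty)$ by continuity, and the subsequence issue never arises.
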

\begin{proof}
Let $f\in \text{CM}$ with $\lim_{x\to 0+}f(x)=1$ and
 $\lim_{x\to \infty}f(x)=0$. We aim at constructing a positive measure $\mu$ such that $f=\mathcal L(\mu)$. The Taylor expansion centered at $a>0$ with integral type remainder reads as follows
 \begin{align*}
  f(x)&=\sum_{k=0}^{n-1}\frac{(-1)^kf^{(k)}(a)}{k!}(a-x)^k+\int_x^a \frac{(-1)^nf^{(n)}(s)}{(n-1)!}(s-x)^{n-1}\, ds.
 \end{align*}
 Now comes the key to the proof: When $0<x<a$ all terms in the sum and the integrand are positive by the complete monotonicity! By monotone convergence we thus obtain
 $$
 \int_x^{\infty} \frac{(-1)^nf^{(n)}(s)}{(n-1)!}(s-x)^{n-1}\, ds=
 \lim_{a\to \infty}\int_x^a \frac{(-1)^nf^{(n)}(s)}{(n-1)!}(s-x)^{n-1}\, ds
 \leq f(x).
 $$
Consequently,
$$
\lim_{a\to \infty}\sum_{k=0}^{n-1}\frac{(-1)^kf^{(k)}(a)}{k!}(a-x)^k
=f(x)-\int_x^{\infty} \frac{(-1)^nf^{(n)}(s)}{(n-1)!}(s-x)^{n-1}\, ds.
$$
Putting
$$
c_n(x)=\lim_{a\to \infty}\sum_{k=0}^{n-1}\frac{(-1)^kf^{(k)}(a)}{k!}(a-x)^k,
$$
it follows that $c_n=c_n(x)$ does not depend on $x$ (use $\nicefrac{(a-x_1)}{(a-x_2)}\to 1$ as $a\to \infty$), and furthermore,
$$
c_n\leq c_n+\int_x^{\infty} \frac{(-1)^nf^{(n)}(s)}{(n-1)!}(s-x)^{n-1}\, ds=f(x)\to 0
$$
for $x\to \infty$. Hence $c_n=0$. This and a clever change of variable ($t=n/s$) give us
\begin{align*}
f(x)
&=\int_x^{\infty} \frac{(-1)^nf^{(n)}(s)}{(n-1)!}(s-x)^{n-1}\, ds\\
&=\int_x^{\infty} \left(1-\frac{x}{s}\right)^{n-1}\frac{(-1)^nf^{(n)}(s)}{(n-1)!}s^{n-1}\, ds\\
&=\int_0^{\infty} \left(1-\frac{x}{s}\right)_+^{n-1}\frac{(-1)^nf^{(n)}(s)}{(n-1)!}s^{n-1}\, ds\\
&=\int_0^{\infty} \left(1-\frac{tx}{n}\right)_+^{n-1}\frac{(-1)^nf^{(n)}(n/t)}{n!}(n/t)^{n+1}\, dt.
\end{align*}
We let
$$
f_n(t)=\frac{(-1)^nf^{(n)}(n/t)}{n!}(n/t)^{n+1}
$$
and letting $x$ tend to zero in the relation above it follows that $\int_0^{\infty}f_n(t)dt=1$ so that $\mu_n=f_n(t)dt$ is a probability measure. By vague compactness there is a positive measure $\mu$ and a subsequence $\{\mu_{n_k}\}$ such that $\lim_{k}\mu_{n_k}=\mu$ vaguely.
Next two facts are used:
\begin{enumerate}
 \item [Fact 1] $\left(1-\frac{tx}{n}\right)_+^{n-1}\to e^{-xt}$ uniformly for $t\geq 0$. This is elementary but a little tricky to verify.
 \item [Fact 2] $\lim_{k}\int_0^{\infty}e^{-xt}d\mu_{n_k}(t)=\int_0^{\infty}e^{-xt}d\mu(t)$. To see this use the vague convergence, that $\mu_{n_k}$ is a probability and that $e^{-xt}=o(1)$.
\end{enumerate}
These facts imply
\begin{align*}
f(x)
&=\int_0^{\infty} \left(1-\frac{tx}{n_k}\right)_+^{n-1}f_{n_k}(t)\, dt\\
&=\int_0^{\infty} \left(\left(1-\frac{tx}{n_k}\right)_+^{n-1}-e^{-xt}\right)f_{n_k}(t)\, dt\\
&\phantom{=}\ + \int_0^{\infty} e^{-xt}f_{n_k}(t)\, dt-\int_0^{\infty} e^{-xt}\, d\mu(t)+\int_0^{\infty} e^{-xt}\, d\mu(t)\\
&\to \int_0^{\infty} e^{-xt}\, d\mu(t)\quad \text{as}\ k\to \infty.
\end{align*}
The assumptions on the limit behaviour at $0$ and $\infty$ of $f$ can be removed but we shall not describe this part of the proof and the uniqueness is also skipped.
\end{proof}

We gather some facts about completely monotonic functions in the next proposition.
\begin{prop}
 Let $\{f_n\}$ is a sequence from CM. Then:
 \begin{itemize}
  \item If $f_n(x)\to f(x)$ for all $x>0$ (pointwise) then $f$ is CM and $f_n^{(k)}\to f^{(k)}$ uniformly on any $[\epsilon,\infty)$ for all $k\geq 0$.
  \item If $\sup_n \lim_{x\to 0}f_n(x)<\infty$ there is a subsequence $\{f_{n_k}\}$ and $f\in \text{CM}$ such that $f_{n_k} \to f$ pointwise.
 \end{itemize}

\end{prop}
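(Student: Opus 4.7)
The plan for both items is to pass through Bernstein's representation $f_n = \mathcal{L}(\mu_n)$ and reduce each assertion to vague convergence of the representing measures, using the vague compactness statement recalled earlier in the lecture.

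For the first item, pointwise convergence gives $\sup_n f_n(x_0) < \infty$ at every $x_0 > 0$. The elementary inequality $\mu_n([0,M]) \leq e\,f_n(1/M)$, which follows from $e^{-t/M}\geq e^{-1}$ on $[0,M]$, therefore yields a uniform bound on $\mu_n$ restricted to every compact subset of $[0,\infty)$. Vague compactness now produces, from every subsequence of $\{\mu_n\}$, a further subsequence $\mu_{n_k} \to \mu$ vaguely. A tail estimate combining this uniform mass bound with the exponential decay of $t \mapsto e^{-xt}$ upgrades vague convergence to convergence of the Laplace transforms, $\mathcal{L}(\mu_{n_k})(x) \to \mathcal{L}(\mu)(x)$ for each $x>0$. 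Hence every vague cluster point $\mu$ satisfies $\mathcal{L}(\mu) = f$; the uniqueness of the representing measure in Bernstein's theorem pins down a single $\mu$, so $\mu_n \to \mu$ vaguely along the whole sequence, and in particular $f = \mathcal{L}(\mu) \in \mathrm{CM}$. Applied to $(-1)^k f_n^{(k)}(x) = \int_0^\infty t^k e^{-xt}\,d\mu_n(t)$ in place of $f_n$, the same tail-plus-vague argument yields pointwise convergence of all derivatives. Since each $(-1)^k f_n^{(k)}$ is itself CM and hence monotone, the classical fact that monotone functions converging pointwise to a continuous function converge locally uniformly promotes this to uniform convergence on compact subsets of $(0, \infty)$.

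For the second item, the hypothesis $\sup_n \lim_{x \to 0+} f_n(x) < \infty$ is, by monotone convergence applied to $f_n(x) = \int e^{-xt}\,d\mu_n(t)$, exactly $\sup_n \mu_n([0,\infty)) < \infty$. Viewing the $\mu_n$ as positive measures on the one-point compactification $[0,\infty]$, vague compactness (which on a compact space coincides with weak-star compactness) delivers a subsequence $\mu_{n_k}$ converging to some positive measure $\tilde\mu$ on $[0,\infty]$. Setting $\mu := \tilde\mu|_{[0,\infty)}$ and $f := \mathcal{L}(\mu)$, one observes that for every $x > 0$ the function $t\mapsto e^{-xt}$ extends continuously to $[0,\infty]$ with value $0$ at $\infty$, so $f_{n_k}(x) \to \int_{[0,\infty]} e^{-xt}\,d\tilde\mu(t) = f(x)$, and $f \in \mathrm{CM}$ by Bernstein.

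The main obstacle in both items is the same: the passage from vague convergence on $[0,\infty)$, which by definition controls only integrals against compactly supported continuous test functions, to convergence of Laplace transforms against the merely-vanishing-at-infinity kernel $e^{-xt}$. In the first item this gap is bridged by the tail estimate supplied by the uniform mass bound on large intervals; in the second item it dissolves once one compactifies $[0,\infty)$ to $[0,\infty]$, on which $e^{-xt}$ becomes a genuine element of $C([0,\infty])$ and vague convergence automatically gives the integral limit.
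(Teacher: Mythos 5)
The paper states this proposition without proof, so there is no argument to compare against; I will assess yours on its own terms. Your strategy---pass through Bernstein's representation $f_n=\mathcal L(\mu_n)$, reduce both assertions to vague convergence of the representing measures, and bridge the gap between compactly supported test functions and the Laplace kernel by a tail estimate in the first item and by compactifying to $[0,\infty]$ in the second---is sound, and it is the natural proof. The inequality $\mu_n([0,M])\le e\,f_n(1/M)$ and the subsequence-of-a-subsequence argument via uniqueness of the Bernstein measure are both correct, as is the Dini/P\'olya step upgrading pointwise to locally uniform convergence for monotone limits.

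Two refinements are worth flagging. First, the proposition as printed asserts uniform convergence on $[\epsilon,\infty)$ for \emph{all} $k\ge 0$, whereas your monotonicity argument delivers uniform convergence on compact subsets of $(0,\infty)$. For $k=0$ this is the best possible: take $f_n(x)=e^{-x/n}\in\mathrm{CM}$, which converges pointwise to $f\equiv 1$, yet $\sup_{x\ge\epsilon}|f_n(x)-1|=1$ for every $n$. For $k\ge 1$ the stronger $[\epsilon,\infty)$ conclusion does hold, because $(-1)^k f^{(k)}(x)=\int t^k e^{-xt}\,d\mu(t)\to 0$ as $x\to\infty$ and the whole tail $[R,\infty)$ is then controlled by the value at the single point $R$ via monotonicity, so with a small additional argument you could recover it there; for $k=0$ your more cautious statement is the correct one. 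Second, the family $\{\mu_n\}$ need not have uniformly bounded \emph{total} mass (the $f_n$ may blow up at the origin), so the vague compactness you invoke is not literally the one recalled in the lecture, which assumes a uniform bound on $\mu_n(X)$. What you actually have is a uniform bound on $\mu_n(K)$ for every compact $K\subset[0,\infty)$, and the version of vague relative compactness that this yields (via a diagonal argument over an exhaustion by compacts) is the one you need; it would be worth making that explicit. Neither point affects the substance of the argument, but both deserve a sentence in a polished write-up.
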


Let us give a few remarks about the history of completely monotonic and positive definite functions.
The Fourier transform of a finite positive measure is positive definite:
If $f(x)=\int_{-\infty}^{\infty}e^{ixt}\, d\mu(t)$ then
$$
\sum_{i,j=1}^nf(x_i-x_j)c_i\overline{c_j}=\int_{-\infty}^{\infty}\left|\sum_{k=1}^nc_ke^{ix_kt}\right|^2\, d\mu(t)\geq 0
$$
for all $n\geq 1$, and all $x_1,\ldots,x_n\in \mathbb R$ and all $c_1,\ldots,c_n\in \mathbb C$.

A converse result was obtained by Bochner (Bochner's theorem, 1932-33) and states that if $f:\mathbb R\to \mathbb C$ is positive definite (meaning that the double sum in the relation above is non-negative) and continuous then $f$ is the Fourier transform of a positive finite measure on $\mathbb R$.

Not long after, Bochner's theorem was extended to a semigroup setting.
The non-negative integers with addition is a semigroup and Bochner's theorem in this setting gives a proof of Hamburger's theorem characterizing moment sequences.

Also the half-line $(0,\infty)$ with multiplication is a semigroup and in this case Bochner's theorem can be stated as follows. A function $f:(0,\infty)\to \mathbb R$ is positive definite, bounded and continuous if and only if $f$ is the Laplace transform of a finite positive measure on $[0,\infty)$. Notice that Bernstein's theorem was already obtained a few years earlier.


 \begin{exercise}
  Find $\mu$, $\sigma$ and $\tau$ such that $\mathcal L(\mu)(x)=\nicefrac{1}{(1+x)}$, $\mathcal L(\sigma)(x)=e^{-x}$ and $\mathcal L(\tau)(x)=\nicefrac{1}{(1-e^{-x})}$. {\em [Hint: for $\tau$  consider the geometric series!]}
 \end{exercise}

\begin{exercise}
 Show that
 $$
 \log \left(1+\nicefrac{1}{x}\right)=
 \int_0^{\infty}e^{-xt}\left(\frac{1-e^{-t}}{t}\right)\, dt.
 $$
\end{exercise}

\begin{exercise}
 Show that $f\in \mathcal S_{\lambda}$ if and only if there are $c\geq 0$ and $\varphi\in \text{CM}$ such that
 $$
 f(x)=c+\int_0^{\infty}e^{-xt}t^{\lambda-1}\varphi(t)\, dt.
 $$
 Use this relation to show that $\mathcal S_{\lambda_1}\subseteq\mathcal S_{\lambda_2}$ when $\lambda_1<\lambda_2$.
\end{exercise}



We saw in one of the exercises above that any Stieltjes function is completely monotonic. There is in fact a close relation between complete monotonicity and the Stieltjes class; this is the famous theorem of Widder.
 \begin{prop}
  [Widder's theorem]
  $$
  f\in \mathcal S_1 \Leftrightarrow (x^kf(x))^{(k)}\ \text{is CM for all}\  k\geq 0.
  $$
 \end{prop}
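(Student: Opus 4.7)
The plan is in two parts: the forward direction is a short direct calculation, and the reverse direction reduces (via Bernstein) to a structural statement about the Laplace measure of $f$.

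$(\Rightarrow)$: Starting from $f(x)=c+\int_{0}^{\infty}(t+x)^{-1}\,d\sigma(t)$, polynomial division gives $x^{k}/(t+x)=q_{t}(x)+(-t)^{k}/(t+x)$ with $q_{t}$ a polynomial in $x$ of degree less than $k$, hence $q_{t}^{(k)}\equiv 0$. Consequently
$$
(x^{k}f(x))^{(k)} \;=\; c\cdot k! \;+\; k!\int_{0}^{\infty}\frac{t^{k}}{(t+x)^{k+1}}\,d\sigma(t),
$$
which is CM because the kernel $t^{k}/(t+x)^{k+1}$ is CM in $x$ for each fixed $t\geq 0$ and CM is stable under positive linear combinations and integration of a parameter.

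$(\Leftarrow)$: Taking $k=0$ shows $f\in\mathrm{CM}$, so by Bernstein $f=\mathcal{L}(\mu)$ for a unique positive measure $\mu$ on $[0,\infty)$. By the earlier exercise characterising $\mathcal{S}=\mathcal{S}_{1}$ as the set of functions $c+\int_{0}^{\infty}e^{-xt}\varphi(t)\,dt$ with $\varphi\in\mathrm{CM}$, it suffices to show $\mu=c\delta_{0}+\varphi(t)\,dt$ with $\varphi\in\mathrm{CM}$. The higher-$k$ hypotheses enter through the Rodrigues-type identity
$$
\frac{1}{k!}\frac{d^{k}}{dx^{k}}(x^{k}e^{-xt}) \;=\; L_{k}(xt)e^{-xt},
$$
where $L_{k}$ is the Laguerre polynomial of degree $k$ (easily verified by induction). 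Differentiating $f=\mathcal{L}(\mu)$ under the integral gives
$$
h_{k}(x) \;:=\; \frac{1}{k!}(x^{k}f(x))^{(k)} \;=\; \int_{0}^{\infty}L_{k}(xt)e^{-xt}\,d\mu(t),
$$
and a second application of Bernstein yields $\nu_{k}\in M^{+}([0,\infty))$ with $h_{k}=\mathcal{L}(\nu_{k})$. Applying the Laplace transform in $x$ and using the classical identity $\int_{0}^{\infty}e^{-ux}L_{k}(xt)e^{-xt}\,dx=u^{k}/(u+t)^{k+1}$, Fubini produces
$$
\int_{0}^{\infty}\frac{d\nu_{k}(s)}{u+s} \;=\; \int_{0}^{\infty}\frac{u^{k}}{(u+t)^{k+1}}\,d\mu(t),\qquad u>0,\ k\geq 0.
$$
Multiplying by $u$, the right-hand integrand is $\bigl(u/(u+t)\bigr)^{k+1}\leq 1$ and tends pointwise to the indicator of $\{t=0\}$, so dominated convergence identifies $c:=\mu(\{0\})$ as the constant in the desired Stieltjes representation.

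The main obstacle is the remaining step: proving that $\mu|_{(0,\infty)}$ is absolutely continuous with a CM density. The cleanest finish is to bootstrap the hypothesis $h_{k}\in\mathrm{CM}$ to $h_{k}\in\mathcal{S}_{k+1}$ using the earlier exercise that characterises $\mathcal{S}_{\lambda}$ as Laplace transforms of $t^{\lambda-1}\varphi(t)$ with $\varphi\in\mathrm{CM}$; the case $k=0$ then gives $f\in\mathcal{S}$ directly. An alternative, in the complex-analytic spirit of the course, is to show that $xf(x)$ extends holomorphically to $\mathbb{C}\setminus(-\infty,0]$ with non-negative imaginary part on $\mathbb{H}$, placing $xf$ in $\mathcal{N}_{(-\infty,0]}$ and hence $f\in\mathcal{S}$ by Proposition~\ref{prop:ext}; the family of CM conditions on the $h_{k}$ is precisely what one expects to use to control the growth and positivity needed for a Phragmén--Lindelöf argument.
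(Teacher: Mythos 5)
The paper states Widder's theorem without proof, so there is no in-text argument to compare against; I will simply review what you wrote.

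Your forward direction is correct and essentially the standard one: the decomposition $x^{k}/(t+x)=q_{t}(x)+(-t)^{k}/(t+x)$ with $\deg q_{t}<k$ kills the polynomial part under $k$ derivatives, the bound $t^{k}/(t+x)^{k+1}\leq 1/(t+x)$ keeps the integral convergent, and a routine dominated-convergence argument justifies differentiating under the integral; the resulting kernel $t^{k}/(t+x)^{k+1}$ is plainly CM in $x$. This direction is fine.

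The reverse direction, however, is not a proof. You take $k=0$, invoke Bernstein to write $f=\mathcal{L}(\mu)$, derive the Laguerre identity $\tfrac{1}{k!}\partial_x^{k}(x^{k}e^{-xt})=L_{k}(xt)e^{-xt}$, and arrive at
$$
\int_{0}^{\infty}\frac{d\nu_{k}(s)}{u+s}=\int_{0}^{\infty}\frac{u^{k}}{(u+t)^{k+1}}\,d\mu(t),
$$
all of which is correct (modulo a Fubini justification that needs a word of care because $L_{k}$ is not sign-definite). But at this point you stop and describe the remaining step -- that $\mu\!\restriction_{(0,\infty)}$ is absolutely continuous with a completely monotonic density -- as ``the main obstacle,'' and the two ways you suggest to bridge it do not work as stated. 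The ``bootstrap to $h_{k}\in\mathcal{S}_{k+1}$'' is circular: at $k=0$ the claim $h_{0}\in\mathcal{S}_{1}$ is literally $f\in\mathcal{S}$, which is what you are trying to prove, and for $k\geq 1$ you have given no reason why the CM hypothesis should upgrade to membership in $\mathcal{S}_{k+1}$. The complex-analytic alternative (extend $xf$ to $\mathbb{C}\setminus(-\infty,0]$ and show $\Im(xf)\geq 0$ on $\mathbb{H}$) is the right target in view of Proposition~\ref{prop:ext}, but you have not explained how the family of hypotheses $(x^{k}f)^{(k)}\in\text{CM}$ actually produces the holomorphic extension or controls the boundary/growth behaviour needed for a Phragm\'en--Lindel\"of argument. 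The step identifying $c=\mu(\{0\})$ by letting $k\to\infty$ is also underdeveloped -- the left-hand side $u\int d\nu_{k}(s)/(u+s)$ has no obvious limit in $k$, and $\mu$ need not be finite, so dominated convergence is not automatic. In short: the easy direction is complete, the hard direction is an honest and structurally sensible sketch (the Laguerre/Post--Widder apparatus is indeed how one usually attacks this), but it contains a genuine gap precisely at the point where the theorem's content lives.
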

There are similar results concerning generalized Stieltjes functions of positive order. This was investigated in a paper by Sokal in 2010 \cite{Sokal}, and also by Koumandos and myself.
 \begin{prop}
  $$
  f\in \mathcal S_{\lambda} \Leftrightarrow
  c_k(f)(x)\equiv x^{1-\lambda}(x^{\lambda-1+k}f(x))^{(k)}\  \text{is CM for all}\ k\geq 0.
  $$
 \end{prop}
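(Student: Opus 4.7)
The statement splits into two implications; the forward direction ($\Rightarrow$) is a direct Leibniz computation on the generating kernel, while the converse ($\Leftarrow$) requires an inversion argument via Bernstein's theorem.

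For the forward implication, the plan is to compute $c_k$ applied to the elementary kernel $f_t(x)=(t+x)^{-\lambda}$ (for each fixed $t\geq 0$) and to the constant function $1$, and then to exchange $c_k$ with $\int \cdots d\mu(t)$ by dominated convergence, justified by the integrability condition $\int_0^{\infty}(1+t)^{-\lambda}\,d\mu(t)<\infty$ together with the uniform bound $(t+x)^{-\lambda-k}\leq C(1+t)^{-\lambda}$ on compact subsets of $(0,\infty)$. Leibniz's rule, combined with $(x^{\lambda-1+k})^{(j)}=\Gamma(\lambda+k)/\Gamma(\lambda+k-j)\,x^{\lambda-1+k-j}$ and $\bigl((t+x)^{-\lambda}\bigr)^{(j)}=(-1)^{j}\Gamma(\lambda+j)/\Gamma(\lambda)\,(t+x)^{-\lambda-j}$, gives
$$
\bigl(x^{\lambda-1+k}(t+x)^{-\lambda}\bigr)^{(k)}=\frac{\Gamma(\lambda+k)}{\Gamma(\lambda)}\,x^{\lambda-1}(t+x)^{-\lambda-k}\sum_{j=0}^{k}\binom{k}{j}(-x)^{k-j}(t+x)^{j},
$$
and the binomial theorem collapses the inner sum to $(t+x-x)^{k}=t^{k}$. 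Consequently $c_k(f_t)(x)=\frac{\Gamma(\lambda+k)}{\Gamma(\lambda)}\,t^{k}/(t+x)^{\lambda+k}$, a manifestly positive element of $\mathcal{S}_{\lambda+k}\subseteq \text{CM}$; a trivial parallel calculation handles the constant $c$. Integrating over $\mu$ then yields $c_k(f)\in \text{CM}$.

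For the converse, assume each $c_k(f)$ is CM. Taking $k=0$ gives $c_0(f)=f\in \text{CM}$, so Bernstein's theorem furnishes a unique positive measure $\sigma$ on $[0,\infty)$ with $f(x)=\int_{0}^{\infty}e^{-xs}\,d\sigma(s)$. The exercise immediately preceding the proposition characterizes membership in $\mathcal{S}_{\lambda}$ as $\sigma = c\,\epsilon_0+s^{\lambda-1}\varphi(s)\,ds$ with $\varphi\in\text{CM}$, so it suffices to extract such a $\varphi$ from $\sigma$. Running the forward computation in reverse, rewriting $(t+x)^{-\lambda-k}=\Gamma(\lambda+k)^{-1}\int_{0}^{\infty}e^{-(t+x)u}u^{\lambda+k-1}\,du$ and interchanging integrals gives
$$
c_k(f)(x)-c\,\frac{\Gamma(\lambda+k)}{\Gamma(\lambda)}=\frac{1}{\Gamma(\lambda)}\int_{0}^{\infty}e^{-xu}\,u^{\lambda+k-1}\,(-1)^{k}\varphi^{(k)}(u)\,du,
$$
where $\varphi := \Gamma(\lambda)^{-1}\mathcal{L}(\mu)$. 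Hence, by uniqueness of the Laplace transform, the Bernstein measure of $c_k(f)$ (minus the mass $c\,\Gamma(\lambda+k)/\Gamma(\lambda)$ at infinity) must have density proportional to $u^{\lambda+k-1}(-1)^{k}\varphi^{(k)}(u)$. Taking this identification as a \emph{definition} of the candidate $\varphi$ from $\sigma$, positivity of these densities for every $k\geq 0$---which is forced by the hypothesis $c_k(f)\in\text{CM}$---is precisely the statement $(-1)^{k}\varphi^{(k)}\geq 0$ on $(0,\infty)$, i.e.\ $\varphi\in\text{CM}$. A second application of Bernstein's theorem then produces $\mu$ with $\mathcal{L}(\mu)=\Gamma(\lambda)\varphi$, yielding the desired representation.

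\emph{Main obstacle.} The forward direction is clean once the binomial collapse is spotted. The substantive difficulty lies in the converse: rigorously justifying that the Bernstein measures of the various $c_k(f)$ are mutually consistent with a single underlying $\varphi\in\text{CM}$, and controlling the interchange of Laplace inversion with differentiation. The hypothesis that $c_k(f)$ is CM for \emph{every} $k$ (not just $k=0$) is decisive here: the case $k=0$ alone gives only $f\in \text{CM}$, and it is the higher-order conditions that force the alternating sign pattern $(-1)^{k}\varphi^{(k)}\geq 0$ required to invoke Bernstein a second time and recover the generalized Stieltjes representation.
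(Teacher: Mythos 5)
The paper states this proposition without proof, citing Sokal \cite{Sokal} and Koumandos--Pedersen, so there is no internal argument to compare against; I will therefore assess your proposal on its own terms.

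Your forward direction is correct and complete. The Leibniz computation with the binomial collapse giving
$$
c_k\bigl((t+\cdot)^{-\lambda}\bigr)(x)=\frac{\Gamma(\lambda+k)}{\Gamma(\lambda)}\,\frac{t^{k}}{(t+x)^{\lambda+k}}
$$
is exactly right, and the stated domination $(t+x)^{-\lambda-k}\leq C(1+t)^{-\lambda}$ for $x$ in a compact subset of $(0,\infty)$ justifies differentiating under the integral sign. (A small slip: with $\varphi=\Gamma(\lambda)^{-1}\mathcal L(\mu)$, the leading $\Gamma(\lambda)^{-1}$ in your displayed identity cancels against the $\Gamma(\lambda)$ coming from $(\mathcal L\mu)^{(k)}=\Gamma(\lambda)\varphi^{(k)}$, so the integrand should be $e^{-xu}u^{\lambda+k-1}(-1)^k\varphi^{(k)}(u)$ with no extra prefactor. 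This does not affect the argument.)

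The converse, however, contains a genuine gap that your phrase ``taking this identification as a definition'' does not close. The displayed relation
$$
c_k(f)(x)-c\,\frac{\Gamma(\lambda+k)}{\Gamma(\lambda)}=\int_0^\infty e^{-xu}u^{\lambda+k-1}(-1)^k\varphi^{(k)}(u)\,du
$$
was obtained under the hypothesis $f\in\mathcal S_\lambda$: it presupposes that the Bernstein measure $\sigma$ of $f$, restricted to $(0,\infty)$, is absolutely continuous with a density of the form $u^{\lambda-1}\varphi(u)$ for some smooth $\varphi$. But that is precisely the conclusion to be reached. From $c_0(f)=f\in\mathrm{CM}$ alone you only know $\sigma$ is a positive measure on $[0,\infty)$; there is no candidate $\varphi$ to speak of, so ``$(-1)^k\varphi^{(k)}\geq 0$'' is not yet a well-posed statement. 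What is actually required is a direct argument that the family of Bernstein measures $\sigma_k$ of the functions $c_k(f)$ (each of which exists by hypothesis) satisfies compatibility relations that \emph{force} $\sigma_0$ to be of the form $c\,\epsilon_0+u^{\lambda-1}\varphi(u)\,du$ with $\varphi\in\mathrm{CM}$ --- in particular one must first establish absolute continuity and regularity of $\sigma_0$ on $(0,\infty)$. You correctly flag this as the ``main obstacle,'' but the proposal supplies no mechanism for it, so the converse remains a plan rather than a proof.
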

 Of course not all completely monotonic functions are generalized Stieltjes functions of some positive order but any completely monotonic function can be approximated by such functions.
 \begin{prop}
  Any completely monotonic function is the pointwise limit of a sequence of generalized Stieltjes functions of positive orders.

 \end{prop}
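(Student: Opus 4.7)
My plan combines Bernstein's theorem with the elementary observation that every exponential $e^{-ax}$ is a pointwise limit of explicit members of $\mathcal{S}_n$. Given $f \in \text{CM}$, I will write $f = \mathcal{L}(\mu)$ by Bernstein, approximate the measure $\mu$ by finitely supported atomic measures, and then replace each exponential in the resulting finite sum by its Stieltjes approximant; a diagonal extraction will then produce the desired sequence.

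The essential building block is the identity
$$\left(1+\frac{ax}{n}\right)^{-n}=\frac{(n/a)^n}{(x+n/a)^n}=\int_0^{\infty}\frac{d\nu_{a,n}(s)}{(s+x)^n},\qquad \nu_{a,n}=(n/a)^n\delta_{n/a},$$
valid for every $a>0$ and $n\geq 1$, which exhibits $(1+ax/n)^{-n}$ as a member of $\mathcal{S}_n$. Since $\log(1+u)\geq u/(1+u)$ for $u\geq 0$, the function $n\mapsto n\log(1+ax/n)$ is increasing in $n$ with limit $ax$, so $(1+ax/n)^{-n}\searrow e^{-ax}$. Closure of $\mathcal{S}_n$ under positive linear combinations (the representing measures simply add) shows that every finite sum $\sum_j c_j(1+a_j x/n)^{-n}$ with $c_j\geq 0$ lies in $\mathcal{S}_n$.

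For a general $f \in \text{CM}$ with $f=\mathcal{L}(\mu)$, I proceed in three stages. Truncate $\mu$ to $\mu_R:=\mu|_{[0,R]}$, which is a finite measure; monotone convergence yields $\mathcal{L}(\mu_R)\to f$ pointwise as $R\to\infty$. Next, partition $[0,R]$ into subintervals $I_j^{(m)}$ of mesh tending to zero, pick $a_j^{(m)}\in I_j^{(m)}$, and form $\mu_R^{(m)}=\sum_j\mu(I_j^{(m)})\delta_{a_j^{(m)}}$; the bound $|e^{-ax}-e^{-a_j^{(m)}x}|\leq x|a-a_j^{(m)}|$ on $[0,R]$ gives $\mathcal{L}(\mu_R^{(m)})(x)\to\mathcal{L}(\mu_R)(x)$ for each $x>0$. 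Finally, replace the exponentials by their Stieltjes approximants and set
$$F_{R,m,n}(x):=\sum_j\mu(I_j^{(m)})\left(1+\frac{a_j^{(m)}x}{n}\right)^{-n}\in\mathcal{S}_n,$$
so that $F_{R,m,n}(x)\to\mathcal{L}(\mu_R^{(m)})(x)$ as $n\to\infty$ for each $x>0$ (a finite sum of pointwise convergent terms).

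A diagonal extraction closes the argument. Fix a countable dense set $D=\{x_1,x_2,\ldots\}\subset(0,\infty)$ and, for each $k$, choose successively $R_k$, then $m_k$, then $n_k$ so that $|F_{R_k,m_k,n_k}(x_i)-f(x_i)|<1/k$ for $i=1,\ldots,k$. Setting $F_k=F_{R_k,m_k,n_k}\in\mathcal{S}_{n_k}$ yields pointwise convergence on $D$. To lift this to all of $(0,\infty)$, exploit that each $F_k$ and $f$ are decreasing (as completely monotonic functions) and that $f$ is continuous: sandwiching $F_k(x)$ between $F_k(x_j)$ and $F_k(x_{j'})$ for $x_j<x<x_{j'}$ in $D$ close to $x$ forces $F_k(x)\to f(x)$ for every $x>0$. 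The main obstacle is the bookkeeping in this diagonal argument, namely sequencing the three parameters $R$, $m$, $n$ in the correct order so that the three layers of error can be controlled on a finite set of test points simultaneously; each individual limit is routine.
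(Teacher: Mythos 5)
Your proof is correct, and since the paper states this proposition without supplying an argument, I will assess it on its own merits. The structure is sound: the identity $(1+ax/n)^{-n}=(n/a)^n(x+n/a)^{-n}$ does exhibit $(1+ax/n)^{-n}$ as a member of $\mathcal S_n$ (constant $c=0$, representing measure $\nu_{a,n}=(n/a)^n\delta_{n/a}$, and the required integrability $\int_0^\infty d\nu_{a,n}(t)/(1+t)^n<\infty$ holds trivially since $\nu_{a,n}$ is a bounded atomic measure); the monotone convergence $(1+ax/n)^{-n}\searrow e^{-ax}$ follows from $\log(1+u)\geq u/(1+u)$ exactly as you say; finiteness of $\mu([0,R])$ needed in the discretization step follows from $\mu([0,R])\leq e^{Rx}f(x)<\infty$; and the three-layer diagonal extraction is legitimate because at each stage the error at the finite test set $\{x_1,\dots,x_k\}$ can be made uniformly small by first sending $R\to\infty$, then refining the mesh, then letting $n\to\infty$ termwise in a finite sum. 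The final upgrade from convergence on the countable dense set $D$ to all of $(0,\infty)$ via monotonicity of the $F_k$ (each is completely monotonic, hence decreasing) and continuity of $f$ is the standard Dini-type sandwich, and is carried out correctly.

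Two minor remarks rather than gaps. First, if a subinterval $I_j^{(m)}$ contains $0$ and you choose $a_j^{(m)}=0$, the corresponding term is the constant $\mu(I_j^{(m)})$, which is still in $\mathcal S_n$ via the constant $c$ in the definition; your formula $\nu_{a,n}=(n/a)^n\delta_{n/a}$ degenerates in that case, so it is worth singling out. Second, the final dense-set argument can be avoided entirely: carry out the same three-step approximation directly at an arbitrary fixed $x>0$ to get a sequence $F_k(x)\to f(x)$, and then observe that the parameters $R_k,m_k,n_k$ (and hence the functions $F_k\in\mathcal S_{n_k}$) can be chosen to work simultaneously for $x\in\{x_1,\dots,x_k\}$; this already gives the desired sequence, and the monotonicity/continuity sandwich is a courteous extra that upgrades to full pointwise convergence without further work. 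Neither remark affects the validity of what you wrote.
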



\subsection{Subclasses of completely monotonic functions}
\begin{defn}[The class $\text{CM}(\alpha)$]
  Let $\alpha\in \mathbb R$. A function $\varphi:(0,\infty)\to \mathbb R$ is completely monotonic of order $\alpha$ if $t^{\alpha}\varphi(t)$ is CM. This class of functions is denoted by CM$(\alpha)$.
\end{defn}
The class $CM(\alpha)$ can, for $\alpha>0$ be characterized in terms of extra properties of the representing measure in Bernstein's theorem. We give the version for $\alpha=1$.
\begin{prop}
A function $f$ is CM(1) if and only if
 $f(x)=\mathcal L(\xi)(x)$ where $\xi$ is of the form
 $\xi(t)=\mu([0,t])$, and where $\mu$ is a positive measure on $[0,\infty)$ such that $\mathcal L(\mu)(x)<\infty$ for all $x>0$.
\end{prop}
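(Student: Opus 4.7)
The plan is to reduce the statement directly to Bernstein's theorem applied to the function $xf(x)$, and then to convert between an integral against the measure $d\mu$ and an integral against the distribution function $\xi(t)=\mu([0,t])$ by a Fubini interchange.

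First, suppose $f\in \text{CM}(1)$, so by definition $g(x)=xf(x)$ is CM. By Bernstein's theorem there is a unique positive measure $\mu$ on $[0,\infty)$ such that
$$
xf(x)=\mathcal L(\mu)(x)=\int_0^{\infty}e^{-xt}\,d\mu(t),\qquad x>0,
$$
and the finiteness of the right-hand side for every $x>0$ is precisely the condition $\mathcal L(\mu)(x)<\infty$. Now I would use the elementary identity $\tfrac{1}{x}=\int_0^{\infty}e^{-xs}\,ds$ to rewrite $f(x)=\mathcal L(\mu)(x)/x$. Applying Fubini's theorem (everything is positive, so no integrability issue arises),
$$
f(x)=\frac{1}{x}\int_0^{\infty}e^{-xs}\,d\mu(s)=\int_0^{\infty}d\mu(s)\int_s^{\infty}e^{-xt}\,dt=\int_0^{\infty}e^{-xt}\mu([0,t])\,dt.
$$
Setting $\xi(t)=\mu([0,t])$ gives $f=\mathcal L(\xi)$, as required. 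Note $\xi(t)<\infty$ for every $t>0$ since $\mu([0,t])\leq e^{xt}\int_0^t e^{-xs}\,d\mu(s)\leq e^{xt}\mathcal L(\mu)(x)<\infty$.

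Conversely, suppose $f(x)=\int_0^{\infty}e^{-xt}\xi(t)\,dt$ with $\xi(t)=\mu([0,t])$ and $\mathcal L(\mu)(x)<\infty$ for all $x>0$. Reversing the Fubini computation,
$$
xf(x)=x\int_0^{\infty}e^{-xt}\int_{[0,t]}d\mu(s)\,dt=\int_0^{\infty}d\mu(s)\,x\int_s^{\infty}e^{-xt}\,dt=\int_0^{\infty}e^{-xs}\,d\mu(s)=\mathcal L(\mu)(x),
$$
which is CM by the easy direction of Bernstein's theorem. Hence $f\in \text{CM}(1)$.

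The argument is almost entirely formal once Bernstein's theorem is invoked, so there is no serious obstacle; the only technical point to watch is the justification of the Fubini interchange, which is automatic because $e^{-xt}$ and $d\mu$ are non-negative and the iterated integral $\int_0^\infty e^{-xs}d\mu(s)$ is finite by hypothesis. The finiteness assertion on $\xi$ (that it is a bona fide increasing function rather than infinite-valued) also follows from $\mathcal L(\mu)(x)<\infty$ as shown in the bound above, which ensures $\mu$ is locally finite on $[0,\infty)$.
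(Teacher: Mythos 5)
Your argument is correct and is the natural proof one would write: apply Bernstein's theorem to $xf(x)$, which is CM by the very definition of $\mathrm{CM}(1)$, and then use Tonelli (nonnegative integrand) to pass between $\tfrac{1}{x}\int_0^\infty e^{-xs}\,d\mu(s)$ and $\int_0^\infty e^{-xt}\mu([0,t])\,dt$ via the identity $\int_s^\infty e^{-xt}\,dt = e^{-xs}/x$. The paper states this proposition without proof, so there is no argument to compare against; your reasoning, including the remark that $\mathcal L(\mu)(x)<\infty$ guarantees local finiteness of $\mu$ and hence that $\xi$ is a genuine real-valued nondecreasing function, is complete and sound.
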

\begin{example}
The remainders in the asymptotic expansions of the higher order Gamma functions investigated in the previous lecture are examples of $CM(\alpha)$-functions:
$(-1)^{m-1}R_{2,2m}$ is in CM($m-1$) and
$(-1)^mR_{3,2m}$ is in CM($m-1$). See \cite{Double,Triple-I, Triple-II}.

\end{example}


\begin{defn}
 A function $f:(0,\infty)\to \mathbb R$ is logarithmically completely monotonic (LCM) if $-f'/f=-(\log f)'$ is CM.
\end{defn}
The class of logarithmically completely monotonic functions was introduced and characterized by Horn.

\begin{prop}[Horn]
 The following conditions for a $C^\infty$-function $f: (0,\infty)\to (0,\infty)$ are equivalent:
 \begin{enumerate}
  \item[(i)] $-(\log f)'=-f'/f$ is completely monotonic,
  \item[(ii)] $f^{c}$ is completely monotonic for all $c>0$,
  \item[(iii)] $f^{1/n}$ is completely monotonic for all $n=1,2,\ldots$.
 \end{enumerate}
\end{prop}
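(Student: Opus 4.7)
The plan is to verify the cycle of implications (i)$\Rightarrow$(ii)$\Rightarrow$(iii)$\Rightarrow$(i). The middle step is trivial: if $f^c$ is CM for every $c>0$, then in particular for $c=1/n$. The other two implications require work.

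For (i)$\Rightarrow$(ii), first observe that it suffices to prove the case $c=1$, i.e.\ to show that any positive $C^\infty$ function $g$ on $(0,\infty)$ with $-(\log g)'$ completely monotonic is itself CM. Indeed, for general $c>0$, setting $g=f^c$ gives $-(\log g)'=c\cdot(-(\log f)')$, and CM is closed under multiplication by non-negative scalars, so the claim about $g$ reduces to this special case. To handle the special case, write $h=-g'/g$, which is CM by assumption; equivalently $g'=-gh$. Now I would induct on $n$ to prove $(-1)^n g^{(n)}\geq 0$. The base $n=0$ is the positivity of $g$. For the step, differentiate the identity $g'=-gh$ using Leibniz:
\[
g^{(n+1)}=(-gh)^{(n)}=-\sum_{k=0}^n\binom{n}{k}g^{(k)}h^{(n-k)},
\]
so that
\[
(-1)^{n+1}g^{(n+1)}=\sum_{k=0}^n\binom{n}{k}\bigl[(-1)^kg^{(k)}\bigr]\bigl[(-1)^{n-k}h^{(n-k)}\bigr]\geq 0,
\]
since each bracket is non-negative (the first by the inductive hypothesis, the second because $h$ is CM).

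For (iii)$\Rightarrow$(i), the idea is to recover $-f'/f$ as a pointwise limit of CM functions and invoke the closure property stated earlier (pointwise limits of CM functions are CM). Since each $f^{1/n}$ is CM, so is its ``flipped derivative'' $-(f^{1/n})'$, because for any CM function $u$ one has $(-1)^k(-u')^{(k)}=(-1)^{k+1}u^{(k+1)}\geq 0$. Multiplying by the positive constant $n$ preserves CM, and an elementary computation gives
\[
n\cdot\bigl(-(f^{1/n})'\bigr)=-f^{1/n-1}f'=f^{1/n-1}\cdot(-f').
\]
As $n\to\infty$, $f^{1/n}\to 1$ pointwise on $(0,\infty)$ (using $f>0$), hence $f^{1/n-1}=f^{1/n}/f\to 1/f$ pointwise, and therefore
\[
f^{1/n-1}\cdot(-f')\longrightarrow -f'/f
\]
pointwise on $(0,\infty)$. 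The pointwise-limit closure of CM then yields that $-f'/f=-(\log f)'$ is CM, which is exactly (i).

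I do not expect a serious obstacle; the only thing that needs care is the signed bookkeeping in the Leibniz-based induction in (i)$\Rightarrow$(ii), and the verification that the approximating functions in (iii)$\Rightarrow$(i) really are CM (rather than merely non-negative), for which the small identity $n(f^{1/n})'=f^{1/n-1}f'$ is the key.
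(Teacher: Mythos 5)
Your proposal is correct and follows essentially the same route as the paper's proof: you reduce (i)$\Rightarrow$(ii) to the inclusion LCM $\subseteq$ CM, which you prove by the same Leibniz--induction on $g'=-gh$, and you prove (iii)$\Rightarrow$(i) by the same pointwise limit of the CM functions $-n(f^{1/n})'=f^{1/n}\cdot(-f'/f)\to -f'/f$, appealing to the closure of CM under pointwise limits.
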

\begin{proof}
 We notice first that $\text{LCM}\subseteq \text{CM}$: Assume that $f\in \text{LCM}$ and show by induction that $(-1)^nf^{(n)}\geq 0$. The case $n=0$ is OK by assumption; and the case $n=1$ (although it is not really needed) follows from the relation $-f'=gf$, with $g=-f'/f$. The induction step is based on
 $$
 (-1)^{n+1}f^{(n+1)}=(-1)^n(gf)^{(n)}=\sum_{k=0}^n\binom{n}{k}(-1)^kg^{(k)}(-1)^{n-k}f^{(n-k)}.
 $$
 (i) implies (ii): If $f\in \text{LCM}$ then $-(\log (f^c))'=-c(\log f)'\in \text{CM}$ so that $f^c\in\text{LCM}\subseteq \text{CM}$.

 (ii) implies (iii): This is immediate.

 (iii) implies (i): If $f^{1/n}$ is CM then so is $-n(f^{1/n})'=-f'f^{-1+1/n}=gf^{1/n}$. Letting $n$ tend to infinity it follows that
 $$
 -f'/f=g=\lim_n gf^{1/n}\in \text{CM},
 $$
meaning that $f\in \text{LCM}$.
\end{proof}

Any completely monotonic function $f$ has a natural extension to a holomorphic function in the right half-plane
$\{z\, |\, \Re z>0\}$ via the formula
$$
f(z)=\int_0^{\infty}e^{-zt}\, d\mu(t).
$$
A completely monotonic function extended in this way may of course have zeros in the right half-plane, as the following example shows: Consider, for given $\lambda>2$, $f_{\lambda}(z)=z^{-\lambda}+1$. (We use the principal logarithm in defining the power.) Then, clearly, $f_{\lambda}(z_{\lambda})=0$ where $z_{\lambda}=e^{i\pi/\lambda}$ belongs to the open right half-plane.

The function $f_{\lambda}$ is of course CM, but it is not LCM, as the next proposition shows.

\begin{prop}
If $f$ is LCM then $f$ has no zeros in $\{z\, |\, \Re z>0\}$.
\end{prop}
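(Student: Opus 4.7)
The plan is to construct a holomorphic logarithm for $f$ on the open right half-plane $\{z\,:\,\Re z>0\}$ and then deduce that $f$ cannot vanish there, since it will be realised as the exponential of a finite holomorphic function.

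First I would exploit the hypothesis directly: $g:=-f'/f$ is completely monotonic, so by Bernstein's theorem $g=\mathcal L(\nu)$ for some positive measure $\nu$ on $[0,\infty)$. The representation
$$
g(z)=\int_0^{\infty}e^{-zt}\,d\nu(t)
$$
extends $g$ to a holomorphic function on $\{z\,:\,\Re z>0\}$ by the standard argument (absolute convergence on any half-plane $\Re z\geq \varepsilon>0$, then differentiation under the integral sign).

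Next I would take a holomorphic primitive of $-g$. Since the right half-plane is simply connected,
$$
h(z)=\log f(1)-\int_1^z g(w)\,dw
$$
is well defined (path-independent in $\{z\,:\,\Re z>0\}$), holomorphic, and satisfies $h'=-g$. On the positive real axis we have $h(1)=\log f(1)$ and $h'(x)=-g(x)=(\log f)'(x)$, hence $h(x)=\log f(x)$ for all $x>0$. In particular $e^{h(x)}=f(x)$ on $(0,\infty)$.

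Finally, $f\in\text{LCM}\subseteq\text{CM}$, so $f$ itself extends holomorphically to the right half-plane (again by Bernstein, via its own Laplace representation). Both $f$ and $e^{h}$ are holomorphic on $\{z\,:\,\Re z>0\}$ and agree on the set $(0,\infty)$, which has an accumulation point in the domain; the identity theorem therefore yields $f(z)=e^{h(z)}$ throughout the right half-plane. An exponential never vanishes, so $f$ has no zeros there. I do not expect any substantial obstacle in this argument: the only care required is the routine verification that the Laplace integrals define holomorphic functions on $\{z\,:\,\Re z>0\}$ and that $h$ is path-independent, both immediate consequences of $\Re z>0$ and simple connectivity.
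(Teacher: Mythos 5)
Your proof is correct, but it takes a genuinely different route from the paper's. The paper invokes Horn's theorem (proved just above in the text): for every $n$, $f^{1/n}$ is completely monotonic, so $f=g^n$ with $g\in\text{CM}$; extending both $f$ and $g$ holomorphically to $\{\Re z>0\}$ via their Laplace representations, the identity theorem gives $f=g^n$ there, so any zero of $f$ would have multiplicity divisible by $n$ for every $n$ -- impossible for a nonzero holomorphic function. Your argument instead works directly with $g=-f'/f\in\text{CM}$: you extend $g$ holomorphically to the right half-plane, take a primitive $h$ of $-g$ there (using simple connectivity), verify that $e^{h}=f$ on $(0,\infty)$, and conclude $f=e^{h}$ on all of $\{\Re z>0\}$ by the identity theorem, so $f$ cannot vanish. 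The trade-off: your approach avoids Horn's equivalence entirely and produces a holomorphic logarithm of $f$ explicitly, which is a slightly stronger structural conclusion; the paper's approach is shorter once Horn's theorem is in hand and is arguably more in the spirit of the surrounding discussion (it is exactly what motivates stating Horn's theorem first). One small point worth making explicit in your write-up is that $f>0$ on $(0,\infty)$ (part of the LCM hypothesis) so that $\log f$ and $\log f(1)$ make sense, and that $f$, being CM, extends to $\{\Re z>0\}$ by Bernstein's theorem -- you do say this, but it is the hinge that lets the identity theorem apply.
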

The proof of this result follows from Horn's theorem, implying that for any given $n$, there is $g\in \text{CM}$ such that $f=g^n$. Thus a zero of $f$ in the open right half-plane would have multiplicity at least $n$.

As we saw in one of the exercises above, any generalized Stieltjes function of positive order is completely monotonic. A more delicate question is the following: Which classes $\mathcal S_{\lambda}$ are contained in the class of logarithmically completely monotonic functions?

In view of the example above, if $\mathcal S_{\lambda}\subseteq \text{LCM}$ then $\lambda\leq 2$. Furthermore, when $\lambda\leq 2$ the example does not give us a function with a zero in the open right half-plane. There is a good reason for this: It is a deep result that all generalized Stieltjes functions of order $2$ are LCM. This was obtained by Kristiansen, who solved a long standing conjecture. For references, see \cite{BKP-nielsen}.

\begin{prop}[Kristiansen's theorem]
 If $f\in \mathcal S_{2}$ then $f$ is logarithmically completely monotonic.
\end{prop}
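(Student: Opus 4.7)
I apply Horn's theorem: it suffices to show that $f^c$ is CM for every $c>0$, equivalently (by Bernstein's theorem) that each $f^c$ is the Laplace transform of a positive measure on $[0,\infty)$. By approximating the representing measure of $f\in\mathcal S_2$ by finitely supported measures in the vague topology, and using that pointwise limits of CM functions are CM, it suffices to treat the rational case
$$
f(x)=c_0+\sum_{i=1}^{N}\frac{a_i}{(t_i+x)^2},\qquad c_0,a_i\ge 0,\ t_i\ge 0.
$$

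\textbf{Contour representation and positivity on the cut.} For such a rational $f$ I extend holomorphically to the cut plane $\mathbb C\setminus(-\infty,0]$ and fix the branch of $f^c$ with $f^c>0$ on $(0,\infty)$. The plan is to express $f^c$ as a Hankel-type contour integral wrapping $(-\infty,0]$ and any complex zeros of $f$, yielding
$$
f(x)^c=\int_0^\infty e^{-xs}\rho_c(s)\,ds
$$
where $\rho_c$ collects the boundary jump of $f^c$ across the cut together with residue-type contributions coming from the branch points of $f^c$ at the zeros of $f$. The cut contribution has a definite sign by the elementary identity
$$
\Im\frac{1}{(t+z)^2}=-\frac{2\,\Im z\,(t+\Re z)}{|t+z|^{4}},
$$
which gives $\Im f(z)\le 0$ in the closed first quadrant of $\mathbb H$; combined with $f>0$ on $(0,\infty)$, this pins $\arg f\in[-\pi,0]$ along the cut away from zeros and shows that the cut-jump contribution to $\rho_c$ is non-negative.

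\textbf{Main obstacle.} The real difficulty is the zero set of $f$ in the cut plane, which is non-empty in very simple examples: $f(x)=(1+x)^{-2}+(2+x)^{-2}$ vanishes at $z_\pm=(-3\pm i)/2$. At such points $f^c$ has a branching singularity, so the Hankel contour must be deformed to encircle them, and the heart of Kristiansen's argument is to verify that the contributions from conjugate pairs of zeros combine into a non-negative density on $(0,\infty)$. This verification relies essentially on the order $\lambda=2$: the analogue fails for $\lambda>2$, as witnessed by $f_\lambda(z)=1+z^{-\lambda}\in\mathcal S_\lambda$, which vanishes at $e^{i\pi/\lambda}$ in the open right half-plane and is therefore not LCM by the proposition preceding the theorem.
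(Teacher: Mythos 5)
The paper does not actually prove this statement: it records Kristiansen's theorem as a known deep result (``obtained by Kristiansen, who solved a long standing conjecture'') and points the reader to the reference \cite{BKP-nielsen}. So there is no in-paper argument to compare yours against, and your proposal has to be judged on its own.

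Judged that way, it is a plan rather than a proof, and you say so yourself: the ``Main obstacle'' paragraph explicitly states that ``the heart of Kristiansen's argument is to verify that the contributions from conjugate pairs of zeros combine into a non-negative density on $(0,\infty)$'' and you do not carry out that verification. That step is precisely where all the difficulty lives. Everything before it (reduction to rational $f$ via approximation, the Hankel deformation, the sign of $\Im f$ away from the zeros, the remark that $z^{-\lambda}+1$ shows the result fails for $\lambda>2$) is setup, and none of it touches the genuinely hard positivity at the branch points. As written, the argument establishes nothing beyond the observation that $f$ can vanish in the cut plane and that something special must happen for $\lambda=2$; asserting that ``the contributions combine into a non-negative density'' is exactly the theorem restated in contour language.

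Two of the auxiliary steps also need more care than they are given. First, the reduction: to conclude that $f$ is LCM from the rational case you need $f_n^c\to f^c$ pointwise with each $f_n^c$ CM, and for that you need pointwise convergence $f_n(x)\to f(x)$. Vague convergence of the representing measures does not by itself give this for the kernel $(t+x)^{-2}$, because mass can escape to infinity; you would need a tightness argument or to truncate the measure and control the tail separately. Second, the sign argument for the cut-jump: the identity $\Im\frac{1}{(t+z)^2}=-\frac{2\,\Im z\,(t+\Re z)}{|t+z|^4}$ only yields $\Im f\le 0$ where $\Re z\ge 0$, but the Hankel contour hugs $(-\infty,0]$, i.e.\ $\Re z<0$, where $t+\Re z$ can be negative and the termwise sign is not controlled. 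Pinning $\arg f\in[-\pi,0]$ along the cut therefore requires a genuinely different argument (e.g.\ using that $f\in\mathcal S_2\subset\mathcal S$ forces $-f$ to be a Nevanlinna--Pick function, so $\Im f\le 0$ throughout $\mathbb H$), not the first-quadrant computation you quote.

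In short: the outline is a reasonable way to begin thinking about the theorem, but the central positivity claim at the zeros is left unproved, and the two supporting lemmas are stated imprecisely. This is not a proof of Kristiansen's theorem.
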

%

Relations between generalized Stieltjes, logarithmically completely monotonic and completely monotonic functions  are illustrated in Figure \ref{fig:relations}.
\begin{figure}[htb]
 \begin{center}
  \includegraphics[scale=0.25]{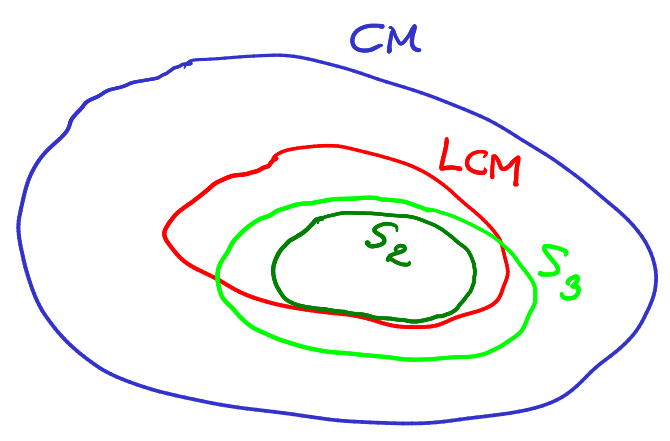}
 \end{center}
 \caption{CM, LCM and $\mathcal S_{\lambda}$}
 \label{fig:relations}
\end{figure}


%

\subsection{An elementary function}

In this section we investigate an elementary function and its relation to subclasses of completely monotonic functions.
 We define
 $$
 h_a(x)=(1+1/x)^{ax},\ \text{for}\  a>0,
 $$
 and aim at understanding monotonicity properties of $h_a$.

\begin{figure}
 \begin{center}
 \includegraphics[scale=0.25]{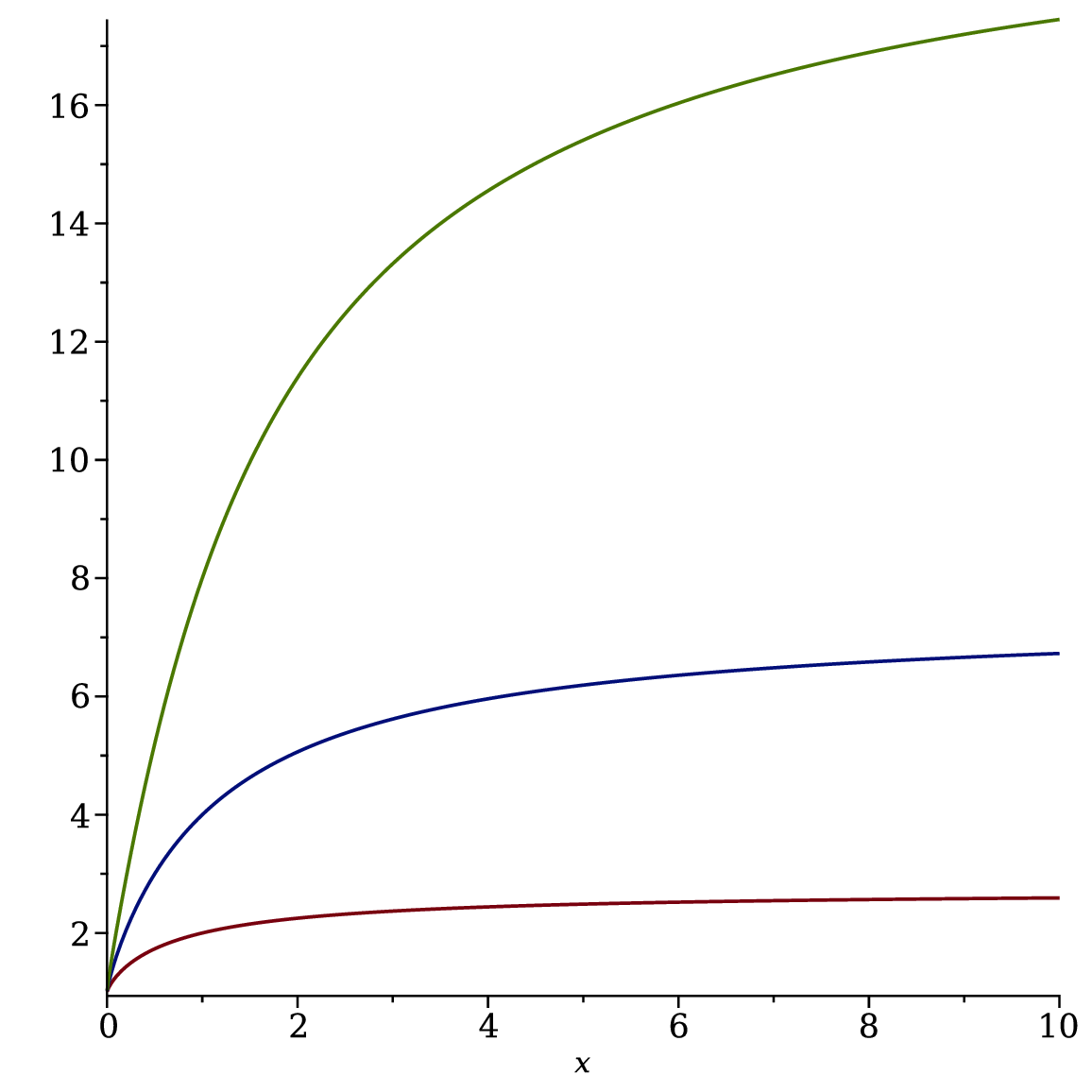}\ \includegraphics[scale=0.25]{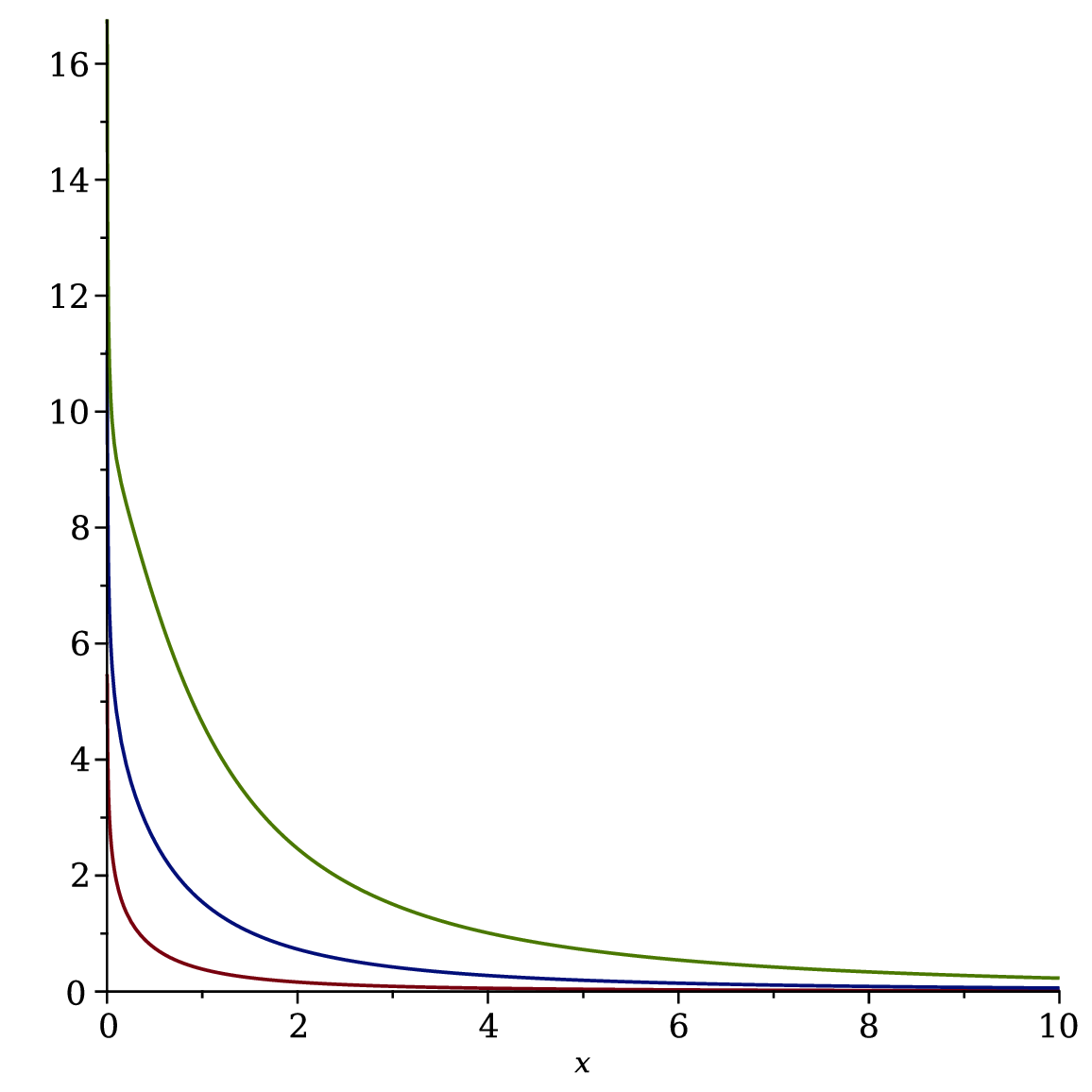}
\end{center}
\caption{The function $h_a$ and its derivative for $a=1,2,3$}
\label{fig:h}
\end{figure}

 In Figure \ref{fig:h} the graphs to the left show plots of $h_a$ and the graphs to the right plots of $h_a'$ for $a=1,2,3$. It is clear that all these functions are increasing and concave. However, looking closely on the plot of the derivatives, $h_3$ (in green colour) seems to be slightly irregular near to the origin. We shall see that differences in monotonicity properties appear when $a$ increases from $2$ to $3$.

 \begin{defn}
 A $C^{\infty}$-function $f:(0,\infty)\to (0,\infty)$ is a Horn-Bernstein function if $f'$ is LCM.
 \end{defn}

  \begin{center}
  {\bf Question}:
  For which $a$ do we have $h_a'\in $ LCM?
  \end{center}

\bigskip

   {We remark that  $h_a'\in \mathcal S_1$ if and only if $a\leq 1$} and that $h_a'\in $ CM if and only if $a\leq a^*\approx 2.299$. This was proved recently, see \cite{berg-massa-peron}.

\begin{prop}
 There is $b^*>0$ such that $h_a$ is a Horn-Bernstein function if and only if $a\leq b^*$. (And $b^*\approx 2.189$.)
 \end{prop}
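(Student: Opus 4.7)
The first step is to reformulate the condition in a tractable way. Setting $\phi(x)=\log(1+1/x)-1/(x+1)$, direct computation gives $(\log h_a)'(x)=a\phi(x)$, hence $h_a'=ah_a\phi$; combined with $\phi'(x)=-1/[x(x+1)^2]$ this yields
\[
-(\log h_a')'(x)=\xi(x)-a\phi(x),\qquad \xi(x):=\frac{1}{x(x+1)^2\phi(x)}.
\]
So $h_a$ is Horn--Bernstein precisely when $\xi-a\phi\in\mathrm{CM}$.

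The next step exploits the identity
\[
\phi(x)=\frac{1}{x+1}\int_0^1\frac{1-u}{u+x}\,du,
\]
which one verifies by elementary integration. This displays $\phi$ as the product of two Stieltjes functions, so in particular $\phi\in\mathrm{CM}$. Consequently the set $I=\{a>0:\xi-a\phi\in\mathrm{CM}\}$ is downward closed: if $a\in I$ and $0<a'\le a$, then $\xi-a'\phi=(\xi-a\phi)+(a-a')\phi\in\mathrm{CM}$. Since $\mathrm{CM}$ is closed under pointwise limits, $I$ is closed in $(0,\infty)$, hence of the form $(0,b^*]$ for some $b^*\in[0,\infty]$.

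The finiteness of $b^*$ is immediate: any CM function is nonnegative, so $\xi-a\phi\in\mathrm{CM}$ forces $a\le 1/[x(x+1)^2\phi(x)^2]$ for every $x>0$. The right-hand side is continuous and strictly positive on $(0,\infty)$, and the asymptotics $\phi(x)\sim\log(1/x)$ as $x\to 0^+$ and $\phi(x)\sim 1/(2x^2)$ as $x\to\infty$ show it blows up at both endpoints. It therefore attains a finite positive minimum, which bounds $b^*$ from above.

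The decisive and hardest step is to prove $b^*>0$ and to pin down its value. The plan is to upgrade $\phi\in\mathrm{CM}$ to $\phi\in\mathrm{LCM}$: each Stieltjes factor in the product representation lies in $\mathcal S_1\subseteq\mathcal S_2\subseteq\mathrm{LCM}$ (by Kristiansen's theorem), and LCM is closed under products (via Horn's characterization). Hence $\xi=-(\log\phi)'\in\mathrm{CM}$, and by Bernstein's theorem $\xi(x)=\int_0^\infty e^{-xs}\,d\nu(s)$ for a unique positive measure $\nu$. Writing also $\phi(x)=\int_0^\infty w(s)e^{-xs}\,ds$ with $w(s)=[1-(1+s)e^{-s}]/s>0$, uniqueness of the representing measure makes the condition $\xi-a\phi\in\mathrm{CM}$ equivalent to $d\nu(s)\ge aw(s)\,ds$ as measures on $(0,\infty)$. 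The main obstacle is then to show that $\nu$ is absolutely continuous with a density $\nu_0(s)$, obtained via Stieltjes inversion applied to the meromorphic continuation of $\xi$ across $(-\infty,0]$, and to analyze the ratio $\nu_0(s)/w(s)$: its essential infimum is exactly
\[
b^*=\operatorname*{ess\,inf}_{s>0}\frac{\nu_0(s)}{w(s)},
\]
which is strictly positive by careful asymptotics of $\nu_0$ and $w$ near $s=0$ and $s\to\infty$; numerical evaluation of this ratio then locates the minimum and yields $b^*\approx 2.189$.
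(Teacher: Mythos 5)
Your reduction is exactly the paper's: with $\phi=\rho$ you arrive at $-h_a''/h_a'=\xi-a\phi$ with $\xi=-\phi'/\phi=1/[x(x+1)^2\phi(x)]$ (the paper's $g-a\rho$), and you correctly observe that $h_a$ is Horn--Bernstein iff $\xi-a\phi\in\mathrm{CM}$. Your structural observations that the admissible set $I$ is downward closed (because $\phi\in\mathrm{CM}$), closed under limits, and bounded above by $\inf_x \xi(x)/\phi(x)<\infty$ are correct, cleanly stated, and not made explicit in the paper's outline; they are a genuine addition.

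Where you diverge is in how you get a Bernstein representation for $\xi$. You factor $\phi(x)=\frac{1}{1+x}\int_0^1\frac{1-u}{u+x}\,du$ as a product of two Stieltjes functions, invoke $\mathcal S_1\subseteq\mathcal S_2\subseteq\mathrm{LCM}$ (Kristiansen) together with the fact that LCM is closed under products, and conclude $\phi\in\mathrm{LCM}$, hence $\xi=-(\log\phi)'\in\mathrm{CM}$. This is an elegant, soft argument; the paper instead works concretely, exhibiting $g=\xi$ as a Stieltjes function $g(x)=\frac{1}{1+x}+\int_0^1\frac{\tau(t)}{x+t}\,dt$ with an explicit density $\tau$ obtained from boundary values, and then reading off the Laplace density $\nu_0(s)=e^{-s}+\int_0^1\tau(u)e^{-su}\,du$ directly. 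The two approaches are equivalent up to this point, but note that your soft argument does not save you anything in the end, because you still need the explicit $\nu_0$ (hence essentially the Stieltjes representation of $\xi$, which you defer to a ``main obstacle'') in order to evaluate $\operatorname*{ess\,inf}\nu_0/w$.

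Where the paper goes further is its final reformulation: multiplying $F_a=\nu_0-aw$ by $e^t$, expanding in a power series, and recognizing $e^tF_a(t)=2+\sum_{n\ge1}\bigl(t_n-\tfrac{a}{n+1}\bigr)\tfrac{t^n}{n!}$, where $t_n=\int_0^1 s^n\tau(1-s)\,ds$ is a Hausdorff moment sequence. This converts the positivity question into one about the asymptotics of a moment sequence, which is the tool the paper then uses (citing the original reference) to locate $b^*$. Your proposal stops at the density-ratio formulation $b^*=\operatorname*{ess\,inf}_{s>0}\nu_0(s)/w(s)$; this is a correct equivalent of $F_a\ge0$, but the asymptotics of $\nu_0/w$ near $0$ and $\infty$ and the absolute continuity of $\nu$ are asserted rather than verified, and without the concrete $\tau$ or the moment-sequence reformulation you have no actual handle on the number $2.189$. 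In short: same reduction, a genuinely different (and nice) argument for $\xi\in\mathrm{CM}$, some correct additional structure, but the decisive quantitative step is only sketched in both, and you are missing the paper's crucial $e^t$-trick that turns the problem into moment-sequence asymptotics.
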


 We shall outline the proof of this result, based on a certain Stieltjes function and a moment sequence. But we begin with some computation! The functions $\rho$ and $g$ are central in the investigation -- hence the red color.

 \begin{align*}
 (\log h_a)'(x)&=(ax\log(1+1/x))'=a\rho(x)\\
 {\color{red}\rho(x)}&=\log(1+1/x)-1/(1+x)\\
 h_a'(x)&=ah_a(x)\rho(x)
 \end{align*}

 \begin{align*}
 h_a''(x)&=ah_a'(x)\rho(x)+ah_a(x)\rho'(x)\\
 &=h_a'(x)(a\rho(x)+\rho'(x)ah_a(x)/h_a'(x))\\
 &=h_a'(x)(a\rho(x)+\rho'(x)/\rho(x))\\
 {\color{red}g(x)}&=-\rho'(x)/\rho(x)\\
 -h_a''(x)/h_a'(x)&=g(x)-a\rho(x)
 \end{align*}

 The key idea is now based on the observation
$$
h_a'\in \text{LCM} \ \Leftrightarrow \ -h_a''/h_a' \in \text{CM} \ \Leftrightarrow \  g-a\rho\in \text{CM}.
$$
It is easily seen that $\rho
\in \mathcal S_2\setminus \mathcal S_1$, and that
 \begin{align*}
   \rho(x)&=\log(1+1/x)-1/(1+x) =\mathcal L\left(\frac{1-e^{-t}}{t}-e^{-t}\right)(x).
 \end{align*}
The next result brings in the relation to Stieltjes functions.
\begin{prop} The function $g$ is a Stieltjes function and admits the following integral representation
   $$
   g(x)=\frac{1}{1+x}+\int_0^1\frac{\tau(t)}{x+t}\, dt,
   $$
   with a certain positive function $\tau$.
 \end{prop}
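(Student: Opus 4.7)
The plan is to regard $g=-\rho'/\rho$ as minus the logarithmic derivative of $\rho$ and to convert analytic information about $\rho$ into the Stieltjes property for $g$. Using $\log(1+1/z)=\int_0^1 dt/(z+t)$ one obtains the factorisation
\[
\rho(z)=\frac{\phi(z)}{z+1},\qquad \phi(z)=\int_0^1\frac{1-t}{z+t}\,dt,
\]
so that $\phi\in\mathcal S$ with representing measure $(1-t)\,dt$ on $[0,1]$. Hence $\phi$ is holomorphic on $\mathbb C\setminus[-1,0]$ with $\Im\phi(z)<0$ strictly on $\mathbb H$, $\phi(x)>0$ for $x>0$ and $\phi(x)<0$ for $x<-1$; in particular $\phi$ has no zeros outside $[-1,0]$. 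Since $\phi(-1)=-1\neq 0$, the function $\rho$ is zero-free on $\mathbb C\setminus[-1,0]$ and has a simple pole of residue $-1$ at $z=-1$. Consequently $g$ extends holomorphically to $\mathbb C\setminus[-1,0]$, is real on $(-\infty,-1)\cup(0,\infty)$, and has a simple pole of residue $+1$ at $-1$.

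To see $g\in\mathcal S$ I verify positivity on $(0,\infty)$ and $\Im g\le 0$ on $\mathbb H$. A direct computation gives $\rho'(z)=-1/\bigl(z(z+1)^2\bigr)$, so $g(x)>0$ for $x>0$. For $x\in(-1,0)$ the boundary value from above is $\rho(x+i0^+)=R(x)-i\pi$ with $R(x)=\log|(x+1)/x|-1/(1+x)$ real, while $\rho'(x)>0$; hence
\[
\Im g(x+i0^+)=-\frac{\pi\rho'(x)}{R(x)^2+\pi^2}<0.
\]
The function $u=\Im g$ is harmonic on $\mathbb H$ with $u(z)=o(|z|)$ as $|z|\to\infty$ (since $g(z)=O(1/z)$), and $\limsup u\le 0$ at every boundary point of $\mathbb R$: near $-1$ the splitting $g(z)=1/(z+1)+g_0(z)$ with $g_0$ holomorphic there gives $\Im g\to-\infty$; near $0$ the asymptotic $g(z)\sim -1/(z\log z)$ also forces $\Im g\to-\infty$ in $\mathbb H$. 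The Phragmén--Lindelöf theorem for the upper half-plane (with $A=0$) then yields $\Im g\le 0$ on $\mathbb H$, so $g\in\mathcal S$.

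Finally, since $g(x)\to 0$ as $x\to\infty$, the representing triple has $c=0$ and $g(x)=\int_{[0,\infty)}d\sigma(t)/(t+x)$; the holomorphic extension past $(-\infty,-1)$ forces $\supp\sigma\subseteq[0,1]$; the simple pole at $z=-1$ with residue $+1$ isolates a Dirac mass of weight $1$ at $t=1$, producing the $1/(1+x)$ term; and Stieltjes inversion on $(0,1)$ identifies the remainder as an absolutely continuous measure with positive density
\[
\tau(t)=-\frac{1}{\pi}\lim_{y\to 0^+}\Im g(-t+iy)=\frac{\rho'(-t)}{R(-t)^2+\pi^2}>0,
\]
which is the representation claimed. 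The main obstacle is the Phragmén--Lindelöf step at the two singular boundary points; peeling off the explicit singular contributions ($1/(z+1)$ at $-1$ and the logarithmic blow-up at $0$) preserves $\limsup\Im g\le 0$ because in each case $\Im g$ actually tends to $-\infty$ from inside $\mathbb H$.
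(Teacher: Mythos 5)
Since the paper states this proposition without proof (the adjacent remark only says that $\tau$ ``is given via boundary values''), you are filling a genuine gap, and the factorization $\rho(z)=\phi(z)/(z+1)$ with $\phi\in\mathcal S$ having representing measure $(1-t)\,dt$ on $[0,1]$ is a good idea: it makes $\rho$ zero-free off $[-1,0]$ and pins down the pole of residue $-1$ at $z=-1$.

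However, your boundary analysis at $z=-1$ is not correct as written. From $\phi(-1+w)=-\int_0^1 s\,ds/(s-w)=-1-w\log(1-w)+w\log(-w)$ one sees that $\phi$ is continuous but \emph{not} holomorphic at $-1$, so $g-1/(z+1)$ is not holomorphic there; in fact $g(z)=1/(z+1)+\log(z+1)+O(1)$ near $-1$. Consequently $\Im g$ does not tend to $-\infty$ along tangential paths into $\mathbb H$ (the same objection applies near $z=0$). What the Phragm\'en--Lindel\"of theorem actually requires is only $\limsup_{z\to -1,\,z\in\mathbb H}\Im g(z)\le 0$, and this does hold --- both $\Im\bigl(1/(z+1)\bigr)\le 0$ and $\Im\log(z+1)-\pi=\arg(z+1)-\pi\in(-\pi,0)$ on $\mathbb H$, while the $O(1)$ error tends to a limit with imaginary part $0$ --- but that needs to be argued rather than derived from an incorrect holomorphy claim.

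The Phragm\'en--Lindel\"of detour can in fact be avoided. Using $\rho'(z)=-1/(z(z+1)^2)$ and your factorization, $1/g(z)=-\rho(z)/\rho'(z)=z(z+1)\phi(z)$; expanding $z(z+1)/(z+t)=z+1-t-t(1-t)/(z+t)$ under the integral sign gives
$$
\frac{1}{g(z)}=z(z+1)\phi(z)=\frac{z}{2}+\frac{1}{3}-\int_0^1\frac{t(1-t)^2}{z+t}\,dt,
$$
every term of which has nonnegative imaginary part on $\mathbb H$. Hence $1/g$ maps $\mathbb H$ into $\mathbb H$, so $\Im g<0$ there, and together with $g>0$ on $(0,\infty)$ the two-condition characterization of $\mathcal S$ from the second lecture gives $g\in\mathcal S$ directly, with no singular boundary points to worry about. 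Your subsequent identification of the representing measure (total mass $2$, unit point mass at $1$ from the residue, density $\tau$ on $(0,1)$ by Stieltjes inversion, support $\subseteq[0,1]$ from holomorphy across $(-\infty,-1)$) is then in good order.
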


 \begin{rem}
  The function $\tau$ is given via boundary values. We shall not give the exact definition; and confine ourselves to showing a plot of it in Figure \ref{fig:tau}.
 \end{rem}

\begin{figure}
\begin{center}
 \includegraphics[scale=0.4]{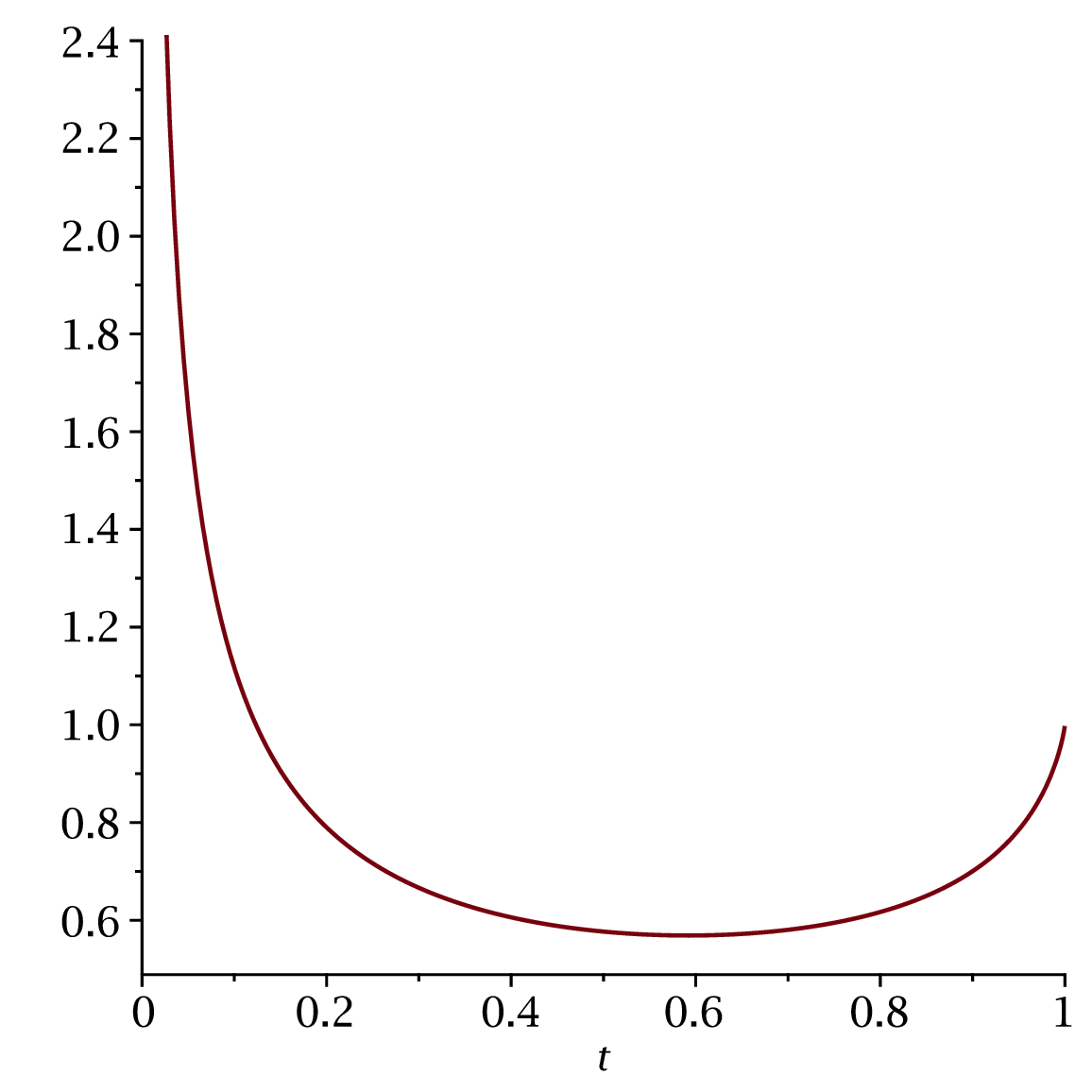}
\end{center}
\caption{ The function $\tau$}
\label{fig:tau}
\end{figure}
Notice $\lim_{x\to \infty}xg(x)=2$ so $\tau$ is a probability measure.

 Since 
 \begin{align*}
   g(x)&=\frac{1}{1+x}+\int_0^1\frac{\tau(s)}{x+s}\, ds\\
   &=\mathcal L(e^{-t})(x)+\int_0^1\tau(s)\int_0^{\infty}e^{-(x+s)t}\, dt \, ds\\
   &=\mathcal L(e^{-t})(x)+\mathcal L\left(\int_0^1\tau(s)e^{-ts}\, ds\right)(x)
   \end{align*}
 we see that $g(x)-a\rho(x)=\mathcal L\left(F_a\right)(x)$, where
 \begin{align*}
   F_a(t)&=(1+a)e^{-t}+\int_0^1\tau(s)e^{-st}\, ds -a\frac{1-e^{-t}}{t}.
 \end{align*}
 Therefore, $g-a\rho\in \text{CM}\Leftrightarrow F_a(t)\geq 0$ for $t\geq 0$. The positivity of $F_a$ is related to the comparison of two moment sequences:

 \begin{align*}
 e^tF_a(t)&=(1+a)+\int_0^1\tau(s)e^{(1-s)t}\, ds -a\frac{e^{t}-1}{t}\\
 &=(1+a)+\int_0^1\tau(1-s)e^{st}\, ds -a\frac{e^{t}-1}{t}\\
 &=(1+a)+\sum_{n=0}^{\infty}\int_0^1\tau(1-s)s^{n}\, ds\frac{t^n}{n!}-a\sum_{n=0}^{\infty}\frac{t^{n}}{(n+1)!},
 \end{align*}
 where $\{t_n\}$ is the moment sequence of the measure $\tau(1-s)\, ds$;
  $$
  t_n=\int_0^1s^n\tau(1-s)\, ds.
  $$
  Therefore we have
  $$
  e^tF_a(t)=2+\sum_{n=1}^{\infty}\left(t_n-\frac{a}{n+1}\right)\frac{t^n}{n!},
  $$
  and this proves the next result.
 \begin{prop}
  $$
  h_a'\in \text{LCM} \ \Leftrightarrow\ 2+\sum_{n=1}^{\infty}\left(t_n-\frac{a}{n+1}\right)\frac{t^n}{n!}\geq 0 \ \ \text{for}\ \ t\geq 0.
  $$
 \end{prop}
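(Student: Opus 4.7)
The plan is to assemble the equivalences that have already been set up in the preceding computations, with Bernstein's theorem doing the main work. The chain of equivalences I want to establish is
\begin{equation*}
h_a'\in \text{LCM} \iff g-a\rho \in \text{CM} \iff F_a(t)\geq 0 \text{ for } t\geq 0 \iff e^{t}F_a(t)\geq 0 \text{ for } t\geq 0,
\end{equation*}
followed by identifying the last expression with the displayed power series.

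First I would record the leftmost equivalence: by definition, $h_a'\in\text{LCM}$ means $-(\log h_a')'=-h_a''/h_a'\in\text{CM}$, and the differential computation already carried out shows $-h_a''/h_a'=g-a\rho$. The middle equivalence is the place where Bernstein's theorem is applied. On the one hand, the Laplace representations of $g$ (via the Stieltjes representation of $g$ with density $\tau$ on $(0,1)$ plus the point-mass term $1/(1+x)$) and of $\rho$ yield $g-a\rho=\mathcal L(F_a)$ with $F_a$ the explicitly given continuous function. On the other hand, Bernstein's theorem characterises CM as the image of $\mathcal L$ applied to positive measures, and asserts that the representing measure is unique. Hence $g-a\rho\in\text{CM}$ if and only if the (a priori signed) measure $F_a(t)\,dt$ is positive, which by continuity of $F_a$ is equivalent to $F_a(t)\geq 0$ for all $t\geq 0$.

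The third equivalence is trivial since $e^{t}>0$, and the final step is the algebraic identification: expanding $\int_0^{1}\tau(1-s)e^{st}\,ds$ and $a(e^{t}-1)/t$ as power series in $t$ with the moments $t_n=\int_0^1 s^n\tau(1-s)\,ds$ and the elementary coefficients $1/(n+1)!$ respectively, and noting that $t_0=\int_0^1\tau(1-s)\,ds=1$ because $\tau$ is a probability density, one obtains
\begin{equation*}
e^{t}F_a(t)=(1+a)+\sum_{n=0}^{\infty}t_n\frac{t^n}{n!}-a\sum_{n=0}^{\infty}\frac{t^n}{(n+1)!}=2+\sum_{n=1}^{\infty}\left(t_n-\frac{a}{n+1}\right)\frac{t^n}{n!},
\end{equation*}
which is precisely the power series appearing in the statement. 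This yields the claimed equivalence.

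The only delicate step is the uniqueness argument inside Bernstein's theorem: one must rule out the possibility that $g-a\rho$ could be CM while $F_a$ takes negative values, by invoking the uniqueness of the representing measure, and then use continuity of $F_a$ to pass from ``a.e.\ positive'' to ``positive everywhere.'' The remaining material is a direct assembly of identities that have already been derived in the preceding discussion, so no new obstacle is expected.
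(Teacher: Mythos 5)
Your proposal is correct and follows essentially the same route as the paper: it assembles the chain $h_a'\in\text{LCM}\Leftrightarrow g-a\rho\in\text{CM}\Leftrightarrow F_a\geq 0\Leftrightarrow e^tF_a\geq 0$ and then identifies $e^tF_a$ with the displayed power series, exactly as the text does in the paragraphs preceding the proposition. Your explicit appeal to uniqueness in Bernstein's theorem (to pass from ``$g-a\rho\in\text{CM}$'' to ``$F_a\geq 0$'') makes precise a step the paper leaves implicit, but it is the same argument.
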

The next step is to find the asymptotic behaviour of the moment sequence $\{t_n\}$. With this information one can identify for which $a$ the function $e^tF_a(t)$ is positive. (Observe: it can be positive without all terms in the power series being positive!)

\section{Fifth lecture: Generalized Bernstein functions}
We review the class of Bernstein functions, closely related to completely monotonic functions, and their role in convolution semigroups of probabilities on the positive line. This is followed by the introduction of generalized Bernstein functions of positive order, and their relation to generalized Stieltjes functions of positive order. Examples are given, including functions related to Euler's gamma function.
References: \cite{KP-gen-bernstein-1}, \cite{KP-gen-bernstein-2}.
 \begin{exercise}
  {Recap. Which inclusions hold and which do not?}
  \begin{enumerate}
   \item LCM $\subseteq \text{CM}$
   \item $\mathcal S_{\lambda}\subseteq \text{CM}$
   \item $\mathcal S_{\lambda}\subseteq \text{LCM}$
  \end{enumerate}

 \end{exercise}


\subsection{Bernstein functions}
\begin{defn}
 A function $g:(0,\infty)\to [0,\infty)$ is a Bernstein function if its derivative $g'$ is completely monotonic. This class is denoted by $\mathcal B$.
\end{defn}
These functions admit integral representations, as one would expect considering Bernstein's integral representation of completely monotonic functions.
\begin{prop}
A function $g$ is a Bernstein function if and only if
 $$
 g(x)=a+bx+\int_0^{\infty}(1-e^{xt})\, d\mu(t),
 $$
 where $a\geq 0$, $b\geq 0$ and $\mu$ is a positive measure measure on $(0,\infty)$ satisfying $\int_0^{\infty}td\mu(t)/(t+1)<\infty$.
 \end{prop}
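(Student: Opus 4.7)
The strategy is to reduce everything to Bernstein's theorem applied to the derivative $g'$, with the only real bookkeeping being the separation of a possible atom at the origin of the representing measure.

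For the direction $g\in\mathcal B\Rightarrow$ representation, the plan is as follows. Since $g'\in\mathrm{CM}$, Bernstein's theorem yields a unique positive measure $\nu$ on $[0,\infty)$ with
$$
g'(x)=\int_0^{\infty}e^{-xt}\,d\nu(t),\qquad x>0.
$$
Write $b=\nu(\{0\})\ge 0$ and let $\nu_0$ be the restriction of $\nu$ to $(0,\infty)$. Fix a reference point $x_0=1$ and integrate $g'$ from $1$ to $x$; by Fubini (both sides are positive after the obvious sign adjustment when $x<1$),
$$
g(x)-g(1)=b(x-1)+\int_0^{\infty}\frac{e^{-t}-e^{-xt}}{t}\,d\nu_0(t).
$$
Define the measure $\mu$ on $(0,\infty)$ by $d\mu(t)=d\nu_0(t)/t$. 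Then the above becomes
$$
g(x)=g(1)-b-\int_0^{\infty}(1-e^{-t})\,d\mu(t)+bx+\int_0^{\infty}(1-e^{-xt})\,d\mu(t).
$$
Now comes the key observation: because $g'\ge 0$ and $g\ge 0$, the function $g$ is increasing, so $a:=\lim_{x\to 0+}g(x)\in[0,\infty)$ exists and is finite. Letting $x\to 0+$ in the displayed formula and applying monotone convergence to $\int(1-e^{-xt})\,d\mu(t)$ (the integrand decreases to $0$) shows that $\int_0^{\infty}(1-e^{-t})\,d\mu(t)$ is finite and
$$
a=g(1)-b-\int_0^{\infty}(1-e^{-t})\,d\mu(t)\ge 0.
$$
Since $(1-e^{-t})\asymp t/(t+1)$, the condition $\int t\,d\mu(t)/(t+1)<\infty$ follows, and the desired representation is established.

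For the converse, assume $g$ has the stated form. Differentiation under the integral sign is justified by dominated convergence on any interval $[\varepsilon,\infty)$, using $te^{-xt}\le t e^{-\varepsilon t}$ together with the integrability of $t/(t+1)$ against $\mu$ (so the dominating function is in $L^1(\mu)$ for $x$ bounded below). This gives
$$
g'(x)=b+\int_0^{\infty}te^{-xt}\,d\mu(t)=\int_0^{\infty}e^{-xt}\,d\nu(t),
$$
with $d\nu(t)=b\,d\delta_0(t)+t\,d\mu(t)$, a positive measure on $[0,\infty)$. Bernstein's theorem then gives $g'\in\mathrm{CM}$, so $g\in\mathcal B$, and positivity of $g$ on $(0,\infty)$ is visible directly from the representation since each of $a$, $bx$, and $(1-e^{-xt})$ is nonnegative.

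The main obstacle, if any, is the careful isolation of the point mass $b=\nu(\{0\})$, because this is what turns into the linear term $bx$ in the representation, and it is what forces the integration in the $\mu$-integral to run over $(0,\infty)$ rather than $[0,\infty)$. Everything else—Fubini, the change $d\mu=d\nu_0/t$, and the $x\to 0+$ limit that produces the constant $a$ and simultaneously the integrability $\int t\,d\mu(t)/(t+1)<\infty$—is routine once this separation is made.
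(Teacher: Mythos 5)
The paper states this proposition without proof (it is offered as a known companion to Bernstein's theorem), so there is no in-text argument to compare against. Your proof is correct and is the standard one: apply Bernstein's theorem to $g'$, split off the atom $b=\nu(\{0\})$ (which becomes the linear term), define $d\mu=d\nu_0/t$, integrate from a fixed base point, and then send $x\to 0^{+}$ using monotone convergence to simultaneously produce the constant $a\ge 0$ and the integrability condition $\int t\,d\mu(t)/(t+1)<\infty$ (via the two-sided comparison $1-e^{-t}\asymp t/(t+1)$). The converse direction, differentiating under the integral sign with the dominating function $t e^{-\varepsilon t}\lesssim t/(t+1)$, is also handled correctly. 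One minor remark: the displayed statement in the paper has a sign typo, $1-e^{xt}$, which you have tacitly and correctly read as $1-e^{-xt}$; it may be worth flagging that explicitly rather than silently correcting it.
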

 (We remark that the Bernstein functions are exactly the continuous and so-called conditionally negative definite functions on $(0,\infty)$.)


 \begin{prop}
  For a function $f:(0,\infty)\to \mathbb R$ the following are equivalent:
   \begin{enumerate}
   \item[(i)] $f$ is a Bernstein function;
   \item[(ii)] $f\geq 0$ and $t\mapsto e^{-xf(t)}$ is CM for all $x>0$.
  \end{enumerate}
  \end{prop}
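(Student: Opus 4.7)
The proof falls naturally into two implications, and in both directions the engine is the stability of the CM class under products, positive linear combinations, and pointwise limits (together with the analogous closure properties for $\mathcal B$).

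For (i)$\Rightarrow$(ii), I would start from the integral representation (with the standard kernel $1-e^{-ut}$),
$$
f(t)=a+bt+\int_0^{\infty}(1-e^{-ut})\,d\nu(u),
$$
which shows $f\geq 0$ since each term is nonnegative. Writing
$$
e^{-xf(t)}=e^{-xa}\cdot e^{-xbt}\cdot\exp\!\Bigl(-x\!\int_0^{\infty}(1-e^{-ut})\,d\nu(u)\Bigr),
$$
the first factor is a positive constant (trivially CM) and the second is CM in $t$ since $xb\geq 0$. For the third factor I would first handle the case of a finitely supported measure $\nu=\sum_j c_j\delta_{u_j}$, where the factor becomes $\prod_j e^{-xc_j}\exp(xc_j e^{-u_jt})$, and each $\exp(xc_je^{-u_jt})=\sum_{k\geq 0}\tfrac{(xc_j)^k}{k!}e^{-ku_jt}$ is a positive combination of CM functions, hence CM; the finite product is then CM. For general $\nu$, approximate by a sequence of finitely supported $\nu_n$ so that $\int(1-e^{-ut})\,d\nu_n(u)\to\int(1-e^{-ut})\,d\nu(u)$ pointwise in $t$, and conclude by closure of CM under pointwise limits (as recorded in the earlier proposition).

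For (ii)$\Rightarrow$(i), the idea is to realize $f$ as a pointwise limit of Bernstein functions. For each $x>0$, set $h_x(t):=e^{-xf(t)}$ and $g_x(t):=(1-h_x(t))/x$. Since $f\geq 0$, we have $h_x\in(0,1]$, so $g_x\geq 0$. A direct sign check shows that if $h_x$ is CM then $-h_x'$ is CM (the defining inequalities $(-1)^n h_x^{(n)}\geq 0$ give $(-1)^n(-h_x')^{(n)}=(-1)^{n+1}h_x^{(n+1)}\geq 0$), hence $g_x'=-h_x'/x$ is CM and $g_x\in\mathcal B$. The pointwise expansion $e^{-xf(t)}=1-xf(t)+O(x^2)$ as $x\downarrow 0$ yields $g_x(t)\to f(t)$, and invoking closure of $\mathcal B$ under pointwise limits (obtained, e.g., by extracting subsequential limits of the triples $(a_x,b_x,\nu_x)$ in the representation via Helly selection) gives $f\in\mathcal B$. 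The main delicate point is the discrete approximation in (i)$\Rightarrow$(ii): one must produce finitely supported $\nu_n$ converging in a sense that respects the integrability $\int u/(1+u)\,d\nu(u)<\infty$; splitting $\nu$ into its restrictions to $(0,1)$ and $[1,\infty)$ and using truncation plus Riemann-type discretization handles this cleanly.
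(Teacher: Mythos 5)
Your proposal is correct, but both implications follow a genuinely different route from the paper's proof, and it is worth contrasting them. For (i)$\Rightarrow$(ii) the paper gives a one-line argument: since $f'$ is CM, so is $-(\log e^{-xf(t)})' = xf'(t)$, which is exactly the statement that $e^{-xf}$ is LCM, and then the inclusion $\mathrm{LCM}\subseteq\mathrm{CM}$ (Horn's theorem, established earlier in these notes) finishes the job. Your factorization of the integral representation, the power-series expansion of $\exp(xc_je^{-u_jt})$, and the discretization of $\nu$ are all sound, but they amount to re-deriving this special case of Horn's inclusion from scratch; what the paper buys by invoking Horn is brevity, what you buy is an explicit, self-contained verification that never needs the LCM machinery. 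For (ii)$\Rightarrow$(i) both arguments pass to the limit $x\downarrow 0$, but by different mechanisms. The paper expands $e^{-xf(t)}=\sum_{n\geq 0}\tfrac{(-1)^nx^n}{n!}f(t)^n$, applies $(-1)^m\partial_t^m$ to the CM inequality, divides by $x$, and lets $x\to 0$ to isolate the $n=1$ term, obtaining $(-1)^{m+1}f^{(m)}\geq 0$ directly; this is entirely elementary. You instead observe that $g_x=(1-e^{-xf})/x$ is itself a Bernstein function (a nice observation), that $g_x\to f$ pointwise as $x\downarrow 0$, and then invoke closure of $\mathcal B$ under pointwise limits. That closure is standard and your Helly-selection sketch is the right idea, but it is a nontrivial fact not recorded in these notes (only the CM analogue is), so the paper's Taylor-expansion route is the more self-contained of the two, while yours is conceptually cleaner in that it reduces everything to a single limiting principle.
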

  \begin{proof}
  (i) implies (ii): $-(\log e^{-xf(t)})'=xf'(t)$ is CM, so $e^{-xf(t)}$ is LCM, hence CM.

  (ii) implies (i): We observe that
  $$
  e^{-xf(t)}=\sum_{n=0}^{\infty}\frac{(-1)^nx^n}{n!}f(t)^n.
  $$
  Hence, for $m\geq 1$, and $x,t>0$,
  $$
  0\leq (-1)^m\partial_t^me^{-xf(t)}=\sum_{n=1}^{\infty}\frac{(-1)^{n+m}x^n}{n!}\partial_t^m(f(t)^n).
  $$
  Next divide by $x$, let $x$ tend to $0$ and obtain $(-1)^{m+1}\partial_t^mf(t)\geq 0$.
\end{proof}


 \begin{defn}
A family $\{ \mu_x\}_{x>0}$ of positive measures on
$[0,\infty)$ is called a convolution semigroup on $[0,\infty)$  if it has the properties
\begin{enumerate}
\item[(i)]
$\mu_{x}([0,\infty))\leq  1$ for all $x>0$;
\item[(ii)]
$\mu_x\ast \mu_y=\mu_{x+y}$ for all $x, y>0$;
\item[(iii)]
$\mu_x \to \epsilon_0$ for $x\to 0$ vaguely (recall that $\epsilon_0$ is the point mass at zero).
\end{enumerate}
\end{defn}
\begin{prop}[Bernstein functions and convolution semigroups]
There is a one-to-one correspondence between convolution semigroups on $[0,\infty)$ and the class of Bernstein functions $\mathcal B$:
$\{ \mu_x\}_{x>0}$ corresponds to $f\in \mathcal B$ if
$$
\mathcal L(\mu_x)=e^{-xf}\quad \text{for}\ x>0.
$$
\end{prop}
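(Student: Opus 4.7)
The plan is to set up the two directions separately, each pivoting on the previous proposition (which says $f\in\mathcal B$ iff $f\geq 0$ and $t\mapsto e^{-xf(t)}$ is CM for every $x>0$) and on Bernstein's theorem.

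Starting from $f\in\mathcal B$: by the previous proposition, for each $x>0$ the function $t\mapsto e^{-xf(t)}$ is completely monotonic, so Bernstein's theorem yields a unique positive measure $\mu_x$ on $[0,\infty)$ with $\mathcal L(\mu_x)=e^{-xf}$. I would then verify the three convolution-semigroup axioms: the mass bound $\mu_x([0,\infty))=\lim_{t\to 0^+}e^{-xf(t)}\leq 1$ coming from $f\geq 0$ and monotone convergence; the composition rule $\mathcal L(\mu_x\ast\mu_y)=e^{-xf}e^{-yf}=\mathcal L(\mu_{x+y})$ combined with the uniqueness clause of Bernstein's theorem to give $\mu_x\ast\mu_y=\mu_{x+y}$; and the vague convergence $\mu_x\to\epsilon_0$ as $x\to 0^+$, which follows from the pointwise limit $e^{-xf(t)}\to 1$ together with the uniform bound on the masses.

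In the reverse direction, starting from a convolution semigroup $\{\mu_x\}_{x>0}$, I would set $\phi_x(t)=\mathcal L(\mu_x)(t)$, noting that each $\phi_x$ is CM in $t$ with $\phi_x\leq 1$, that $\phi_{x+y}=\phi_x\phi_y$ by the convolution identity, and that $\phi_x(t)\to 1$ as $x\to 0^+$ for each $t>0$. For each fixed $t>0$, I would first rule out $\phi_x(t)=0$: if $\phi_{x_0}(t)=0$ then iterating $\phi_{x_0}=\phi_{x_0/2}^{2}$ would give $\phi_{x_0/2^n}(t)=0$ for every $n$, contradicting $\phi_x(t)\to 1$. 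Hence $\psi(x)=-\log\phi_x(t)$ is a nonnegative, monotone (since $\phi_x\leq 1$ forces $\phi_{x+y}\leq\phi_y$) solution of the Cauchy functional equation $\psi(x+y)=\psi(x)+\psi(y)$, and monotonicity forces $\psi(x)=xf(t)$ for a well-defined $f(t)\geq 0$. The previous proposition then immediately gives $f\in\mathcal B$, since $t\mapsto e^{-xf(t)}=\phi_x(t)$ is CM for every $x>0$. Uniqueness in both directions follows from the uniqueness clause of Bernstein's theorem.

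The main obstacle I expect is the transfer between vague convergence of the $\mu_x$ and the pointwise limit of their Laplace transforms as $x\to 0^+$: the vague topology only tests against $C_c([0,\infty))$, while the Laplace kernel $s\mapsto e^{-ts}$ is merely in $C_0$. I plan to handle this by uniformly approximating $e^{-ts}$ by compactly supported continuous functions and using the uniform bound $\mu_x([0,\infty))\leq 1$ to control the tail, producing the required pointwise convergence of Laplace transforms in both directions.
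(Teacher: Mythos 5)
Your plan is correct and, where it overlaps with what the paper actually writes down, it follows the same route: invoke the preceding proposition to see that $t\mapsto e^{-xf(t)}$ is completely monotonic, use Bernstein's theorem to produce $\mu_x$, and derive the convolution identity from $\mathcal L(\mu_x)\mathcal L(\mu_y)=\mathcal L(\mu_{x+y})$ together with uniqueness in Bernstein's theorem. The paper stops there — it proves only that direction and verifies only axiom (ii) of the semigroup definition. You go considerably further, and all the additional steps are sound: the mass bound $\mu_x([0,\infty))=\lim_{t\to 0^+}e^{-xf(t)}\le 1$; the vague convergence to $\epsilon_0$; and, for the converse, the non-vanishing of $\phi_x(t)$ via the doubling argument, the identification of $\psi(x)=-\log\phi_x(t)$ as a monotone solution of Cauchy's functional equation (hence linear), and the appeal to the previous proposition to conclude $f\in\mathcal B$. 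Your flagged concern about transferring between vague convergence and pointwise convergence of Laplace transforms is genuine but exactly as benign as you predict: with masses uniformly bounded by $1$, vague convergence upgrades to convergence against $C_0$ functions (approximate $s\mapsto e^{-ts}$ uniformly by $C_c$ functions and absorb the error by the mass bound), and conversely any vague subsequential limit must have Laplace transform $\equiv 1$ and hence equal $\epsilon_0$. So your proposal is not a different route but a completed version of the paper's sketch; it supplies the two-thirds of the argument the paper omits.
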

\begin{proof}
If $f\in \mathcal B$ then $e^{-xf}$ is CM and hence there is  $\mu_x$ such that $\mathcal L(\mu_x)=e^{-xf}$. This gives
$$
\mathcal L(\mu_x\ast\mu_y)=\mathcal L(\mu_x)\mathcal L(\mu_y)=e^{-xf}e^{-yf}=e^{-(x+y)f}=\mathcal L(\mu_{x+y}).
$$
By uniqueness in Bernstein's theorem, $\mu_x\ast\mu_y=\mu_{x+y}$ and hence $\{\mu_x\}_{x>0}$ has the second property in the definition of a convolution semigroup above.
\end{proof}

\subsection{Generalized Bernstein functions of positive order}

%
 
 \begin{defn}
  Let $\lambda >0$. A function $g:(0,\infty)\to [0,\infty)$ is called a generalized Bernstein function of order $\lambda$ if $g$ is $C^{\infty}$ and $g'(x)x^{1-\lambda}$ is CM. This class is denoted by $\mathcal B_{\lambda}$.
 \end{defn}
 \begin{defn}[Incomplete Gamma function] The incomplete Gamma function $\gamma:(0,\infty)\times (0,\infty)\to (0,\infty)$ is defined as
  $$
 \gamma(\lambda,x)=\int_0^{x}u^{\lambda-1}e^{-u}\, du.
 $$
 \end{defn}
In particular $\gamma(1,x)=1-e^{x}$, which is the expression appearing in the integral representation of Bernstein functions. It is not surprising that there is an integral representation of generalized Bernstein functions in terms of the incomplete gamma function.
 \begin{prop}[Integral representation]
  A function $g$ belongs to $\mathcal B_{\lambda}$ if and only if there exist $a\geq 0$, $b\geq 0$ and a positive measure measure $\mu$ on $(0,\infty)$ satisfying $\int_0^{\infty}\mu(t)/(t+1)^{\lambda}<\infty$, such that
 \begin{align*}
 g(x)
 &=a+bx^{\lambda}+\int_0^{\infty}\gamma(\lambda,xt)\, \frac{d\mu(t)}{t^{\lambda}}.
 \end{align*}
 \end{prop}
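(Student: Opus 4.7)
The plan is to reduce everything to Bernstein's theorem for completely monotonic functions, applied to the auxiliary function $g'(x)x^{1-\lambda}$, and then to integrate back to recover $g$ itself.

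For the sufficiency direction I would differentiate under the integral sign. Using $\partial_x\gamma(\lambda,xt)= x^{\lambda-1}t^{\lambda}e^{-xt}$, the candidate formula yields
\[
g'(x)=b\lambda x^{\lambda-1}+x^{\lambda-1}\int_0^{\infty}e^{-xt}\,d\mu(t),
\]
so that $g'(x)x^{1-\lambda}=b\lambda+\int_0^{\infty}e^{-xt}\,d\mu(t)$. This is CM as a non-negative constant plus the Laplace transform of a positive measure; the hypothesis $\int_0^\infty d\mu(t)/(1+t)^\lambda<\infty$ ensures the Laplace transform is finite for every $x>0$, and also that the defining integral for $g$ converges (since $\gamma(\lambda,xt)/t^{\lambda}\sim x^{\lambda}/\lambda$ as $t\to 0^+$ and $\sim \Gamma(\lambda)/t^{\lambda}$ as $t\to\infty$). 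Non-negativity of $g$ is immediate.

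For the necessity direction, I start from the hypothesis that $g'(x)x^{1-\lambda}\in\mathrm{CM}$. Bernstein's theorem furnishes a unique positive measure $\nu$ on $[0,\infty)$ with $g'(x)x^{1-\lambda}=\int_0^\infty e^{-xt}\,d\nu(t)$. I would peel off the atom at the origin, writing $\nu=c\,\epsilon_0+\mu$ with $c=\nu(\{0\})\geq 0$ and $\mu$ supported in $(0,\infty)$, giving
\[
g'(x)=cx^{\lambda-1}+x^{\lambda-1}\int_0^{\infty}e^{-xt}\,d\mu(t).
\]
Because $g'\geq 0$, $g$ is non-decreasing; since it is also non-negative, the limit $a:=g(0^+)$ exists in $[0,\infty)$. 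Integrating $g'$ from $0$ to $x$ and swapping the two integrals by Tonelli (all integrands non-negative), the inner integral $\int_0^x u^{\lambda-1}e^{-ut}\,du$ evaluates via the substitution $v=ut$ to $t^{-\lambda}\gamma(\lambda,xt)$. Setting $b=c/\lambda$ yields the claimed representation.

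The one step requiring a little care, and which I regard as the main obstacle, is checking the integrability condition $\int_0^\infty d\mu(t)/(1+t)^\lambda<\infty$ in the necessity direction. I would extract it from the finiteness of $g$ itself: evaluating the just-derived representation at, say, $x=1$ gives $\int_0^\infty \gamma(\lambda,t)\,t^{-\lambda}\,d\mu(t)<\infty$, and the asymptotic comparison $\gamma(\lambda,t)/t^{\lambda}\asymp (1+t)^{-\lambda}$ mentioned above converts this into the desired bound. Uniqueness of the triple $(a,b,\mu)$ follows from the uniqueness part of Bernstein's theorem together with the identifications $a=g(0^+)$ and $b\lambda=\nu(\{0\})$.
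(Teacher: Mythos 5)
Your proof is correct. Note that the paper itself does not supply a proof of this proposition: the sufficiency is tacit and the necessity direction is set as an exercise immediately below, with references to the cited papers of Koumandos and Pedersen. Your argument is exactly the one the exercise is steering toward: reduce everything to Bernstein's theorem applied to $x^{1-\lambda}g'(x)$, split off the atom of the representing measure at the origin (which produces the $bx^{\lambda}$ term), and integrate $g'$ back up using the change of variable $v=ut$ to recognize $t^{-\lambda}\gamma(\lambda,xt)$. The two points you flag as needing care are indeed the right ones and you handle both correctly: Tonelli justifies the interchange of the two integrals (everything is non-negative, and finiteness is a posteriori from $g(x)<\infty$), and the two-sided comparison $\gamma(\lambda,t)t^{-\lambda}\asymp(1+t)^{-\lambda}$ (bounded ratio at $0$, at $\infty$, and on compacta) converts $g(1)<\infty$ into the stated integrability condition $\int_0^\infty d\mu(t)/(1+t)^{\lambda}<\infty$. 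The differentiation under the integral sign in the sufficiency direction is justified by dominating $x^{\lambda-1}e^{-xt}$ locally uniformly in $x$ by $C\,e^{-x_0 t}$ together with the finiteness of $\int_0^\infty e^{-x_0 t}\,d\mu(t)$, which in turn follows from the integrability hypothesis. In short, a complete and correct proof along exactly the lines the paper intends.
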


  We saw earlier that $f\in \mathcal S_{\lambda}$ if and only if  $c_k(f)(x)\equiv x^{1-\lambda}(x^{\lambda-1+k}f(x))^{(k)}$ is completely monotonic for all $k\geq 0$. Since $c_1(f)(x)=x^{1-\lambda}(x^{\lambda}f'(x))$, this led us to study functions for which $x^{1-\lambda}$ times ``its derivative'' is CM.
The definition of generalized Bernstein functions is motivated by this.

  \begin{exercise}
   Show that $\mathcal B_{\lambda_1}\subset \mathcal B_{\lambda_2}$ when $\lambda_1<\lambda_2$.
  \end{exercise}

  \begin{exercise}
  Give a partial proof of the integral representation above: Show that if $g\in \mathcal B_{\lambda}$ then
  \begin{align*}
 g(x)
 &=a+bx^{\lambda}+\int_0^{\infty}\gamma(\lambda,xt)\, \frac{d\mu(t)}{t^{\lambda}}.
 \end{align*}
  for some $a\geq 0$, $b\geq 0$ and for some positive measure measure $\mu$ on $(0,\infty)$ satisfying $\int_0^{\infty}\mu(t)/(t+1)^{\lambda}<\infty$.
 \end{exercise}
 \begin{exercise} Prove the following:
  \begin{enumerate}
   \item If $g\in \mathcal B_{\lambda}$ then $g(x)/x^{\lambda}$ is CM.
   \item If $g_1\in \mathcal B_{\lambda_1}$ and $g_2\in \mathcal B_{\lambda_2}$ then $g_1g_2\in \mathcal B_{\lambda_1+\lambda_2}$
  \end{enumerate}

 \end{exercise}

 Let us end this section by giving some examples of concrete special functions from $\mathcal B_{\lambda}$.
 \begin{enumerate}
  \item The incomplete gamma function $\gamma(\lambda,x)$ belongs to $\mathcal B_{\lambda}$.
  \item Lerch's transcendent
 $$
 \Phi(z,1,\lambda)=\sum_{n=0}^{\infty}\frac{z^n}{n+\lambda}, \quad |z|<1,
 $$
 satisfies
  $$
 x^{\lambda}\Phi(-x,1,\lambda)=\int_0^{x}\frac{u^{\lambda-1}}{1+u}\, du.
 $$
 Hence $x^{\lambda}\Phi(-x,1,\lambda)\in \mathcal B_{\lambda}$.
\item The hypergeometric function
$$
{}_2F_1(\nu,\lambda; 1+\lambda;-x)=\sum_{n=0}^{\infty}\frac{(\nu)_n(\lambda)_n}{n!(1+\lambda)_n}(-1)^nz^n
$$
satisfies (for $\nu>0$)
$$
 x^{\lambda}{}_2F_1(\nu,\lambda; 1+\lambda;-x)=\lambda \int_0^x\frac{u^{\lambda-1}}{(1+u)^\nu}du.
 $$
 Hence $x^{\lambda}{}_2F_1(\nu,\lambda; 1+\lambda;-x)\in \mathcal B_{\lambda}$.
 \item The function
 $g_{\lambda}(x)=x^{\lambda}\Gamma(x)/\Gamma(x+\lambda)$ belongs to $ \mathcal B_{\lambda-1}$ for $\lambda >1$. (More on this later.)

 \end{enumerate}

\subsection{The hierarchy of generalized Bernstein functions}

Recall that a function $f$ is a generalized Stieltjes function of order $\lambda$ ($f\in \mathcal S_{\lambda}$) if
 $$
 f(x)=c+\int_0^{\infty}\frac{d\mu(t)}{(t+x)^{\lambda}}, \ x>0.
 $$
As our first result a bijection between this class and the class of generalized Bernstein functions of order $\lambda$ is given.

 \begin{defn}
 Define $\XL: \mathcal B_{\lambda}\to C^{\infty}((0,\infty))$ by
$$
\XL(g)(x)=x\mathcal L(g)(x)=x\int_0^{\infty}e^{-xt}g(t)\, dt.
$$
\end{defn}

\begin{prop}The mapping
 $\XL$ is a bijection of $\mathcal B_{\lambda}$ onto $\mathcal S_{\lambda}$.
\end{prop}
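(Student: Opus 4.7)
The plan is to compute $\XL$ explicitly on the integral representation of a generic $g\in \mathcal B_{\lambda}$ and recognize the output as the integral representation of a generic element of $\mathcal S_{\lambda}$. Write
$$
g(t)=a+bt^{\lambda}+\int_0^{\infty}\gamma(\lambda,tu)\,\frac{d\mu(u)}{u^{\lambda}}
$$
and apply the Laplace transform term by term, which is justified by Fubini since every integrand is nonnegative. The crucial computation is
$$
\int_0^{\infty}e^{-xt}\gamma(\lambda,tu)\,dt=\frac{u^{\lambda}\Gamma(\lambda)}{x(x+u)^{\lambda}},
$$
which I would prove by writing $\gamma(\lambda,tu)=u^{\lambda}\int_0^{t}w^{\lambda-1}e^{-uw}\,dw$ and swapping the order of integration. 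Combined with the elementary identities $\mathcal L(1)(x)=1/x$ and $\mathcal L(t^{\lambda})(x)=\Gamma(\lambda+1)/x^{\lambda+1}$, multiplication by $x$ gives
$$
\XL(g)(x)=a+\frac{b\Gamma(\lambda+1)}{x^{\lambda}}+\Gamma(\lambda)\int_0^{\infty}\frac{d\mu(u)}{(x+u)^{\lambda}},
$$
which has the form $c+\int_0^{\infty}d\nu(u)/(x+u)^{\lambda}$ with $c=a\geq 0$ and $\nu=b\Gamma(\lambda+1)\,\delta_0+\Gamma(\lambda)\mu$. The required finiteness $\int_0^{\infty}d\nu(u)/(1+u)^{\lambda}<\infty$ is immediate from the corresponding property of $\mu$, so $\XL(g)\in \mathcal S_{\lambda}$.

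For surjectivity, reverse the dictionary: given $f\in \mathcal S_{\lambda}$ with representing data $(c,\nu)$, set $a=c$, $b=\nu(\{0\})/\Gamma(\lambda+1)$, and $\mu=\Gamma(\lambda)^{-1}\nu|_{(0,\infty)}$. The integrability condition for $\mu$ transfers directly, so the integral representation proposition for $\mathcal B_{\lambda}$ produces a $g\in \mathcal B_{\lambda}$, and the computation above shows $\XL(g)=f$. Injectivity then follows from uniqueness of the Laplace transform on continuous functions of at most polynomial growth: since $\gamma(\lambda,\cdot)\leq\Gamma(\lambda)$, the representation gives $g(t)=O(1+t^{\lambda})$ for any $g\in \mathcal B_{\lambda}$, so $\mathcal L(g_1-g_2)\equiv 0$ on $(0,\infty)$ forces $g_1=g_2$.

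The main obstacle, such as it is, is the bookkeeping between the two classes: the measure defining a function in $\mathcal B_{\lambda}$ is supported on $(0,\infty)$, whereas the Stieltjes measure lives on $[0,\infty)$, and one must verify that the coefficient $b$ of $t^{\lambda}$ in $g$ accounts precisely for the possible atom of the Stieltjes measure at the origin, with the $\Gamma$-factors tracked correctly. Once the two integral representations are in hand, the proof reduces to a single Fubini calculation; the substantive work already resides in the integral representation proposition for $\mathcal B_{\lambda}$ cited immediately above.
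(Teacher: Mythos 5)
The paper states this proposition without giving a proof, so there is no argument to compare against; on its own terms, your proof is correct and the strategy — pushing the $\mathcal B_{\lambda}$ integral representation through $\XL$ and reading off a $\mathcal S_{\lambda}$ representation, with the $t^{\lambda}$ term becoming the atom of the Stieltjes measure at the origin — is exactly the natural one, and the central computation
$$
\int_0^{\infty}e^{-xt}\gamma(\lambda,tu)\,dt=\frac{u^{\lambda}\Gamma(\lambda)}{x(x+u)^{\lambda}}
$$
is right.

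One step is stated too quickly: you justify the polynomial growth of $g$ (needed for Laplace-transform uniqueness) by the bound $\gamma(\lambda,\cdot)\leq\Gamma(\lambda)$ alone, which gives $\int_0^{\infty}\gamma(\lambda,tu)\,u^{-\lambda}\,d\mu(u)\leq\Gamma(\lambda)\int_0^{\infty}u^{-\lambda}\,d\mu(u)$, and the right-hand side need not be finite under the hypothesis $\int_0^{\infty}(1+u)^{-\lambda}\,d\mu(u)<\infty$. You need the complementary estimate $\gamma(\lambda,tu)\leq (tu)^{\lambda}/\lambda$, which controls the part of the integral over $u\in(0,1]$ and combines with your bound on $u\geq 1$ to give $g(t)=O(1+t^{\lambda})$. (Alternatively, and more directly, note that $g'(x)x^{1-\lambda}$ is CM hence decreasing, so $g'(x)\leq g'(1)x^{\lambda-1}$ for $x\geq 1$, giving the same growth without touching the integral representation.) With that detail filled in, the injectivity argument via uniqueness of the Laplace transform is fine, and the surjectivity follows by reversing the dictionary $(a,b,\mu)\leftrightarrow(c,\nu)$ as you describe.
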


We now turn to properties of the entire family $\{\mathcal B_{\lambda}\}_{\lambda>0}$. From an exercise we know that if $\lambda_1<\lambda_2$ then $\mathcal B_{\lambda_1}\subset \mathcal B_{\lambda_2}$, so the class $\mathcal B_{\lambda}$ gets bigger with $\lambda$. In the following two results we identify $\mathcal B_0=\bigcap_{\lambda>0}\mathcal B_{\lambda}$ and $\mathcal B_{\infty}=\bigcup_{\lambda>0}\mathcal B_{\lambda}$.

\begin{center}
 \includegraphics[scale=0.3]{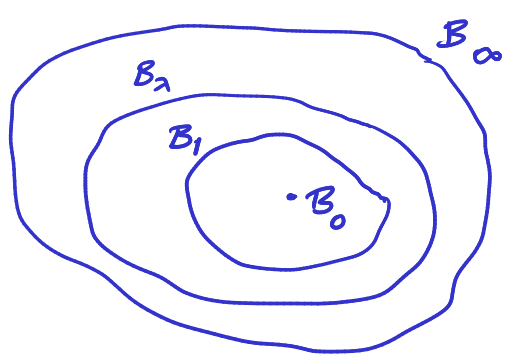}
\end{center}

\begin{prop}
$$
\mathcal B_{0}= \{ \text{the constant functions with values in} \ [0,\infty)\}
$$
\end{prop}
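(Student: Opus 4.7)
My plan is to prove the two inclusions separately. The containment of nonnegative constants is immediate: if $g\equiv c\geq 0$ then $g'\equiv 0$, so $x^{1-\lambda}g'(x)\equiv 0$ is trivially completely monotonic for every $\lambda>0$, whence $g\in\mathcal B_\lambda$ for all $\lambda>0$.

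For the nontrivial inclusion, I would exploit the bijection $\XL\colon\mathcal B_\lambda\to\mathcal S_\lambda$ from the preceding proposition. If $g\in\bigcap_{\lambda>0}\mathcal B_\lambda$, then $f:=\XL(g)$ lies in $\bigcap_{\lambda>0}\mathcal S_\lambda$. A direct computation shows $\XL$ sends the constant $c$ to itself, since $\XL(c)(x)=x\int_0^\infty c\,e^{-xt}\,dt=c$; by injectivity it therefore suffices to prove that $\bigcap_{\lambda>0}\mathcal S_\lambda$ consists only of the nonnegative constants.

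The central tool is the Laplace characterization of $\mathcal S_\lambda$ recorded in the earlier exercise: $f\in\mathcal S_\lambda$ if and only if $f(x)=c_\lambda+\int_0^\infty e^{-xt}t^{\lambda-1}\varphi_\lambda(t)\,dt$ for some $c_\lambda\geq 0$ and some $\varphi_\lambda\in\text{CM}$. Letting $x\to\infty$ shows $c_\lambda=f(\infty)$ is independent of $\lambda$, and uniqueness of the Laplace transform forces the function $w(t):=t^{\lambda-1}\varphi_\lambda(t)$ to be a single, $\lambda$-independent function. Consequently $\varphi_\lambda(t)=t^{1-\lambda}w(t)$ must lie in $\text{CM}$ for every $\lambda>0$. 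Since the class of completely monotonic functions is closed under pointwise limits (which is recorded earlier), sending $\lambda\to 0^+$ yields that $tw(t)$ itself is CM, and Bernstein's theorem then furnishes a positive measure $\sigma$ on $[0,\infty)$ with $tw(t)=\mathcal L(\sigma)(t)$.

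The contradiction would then come from integrability. If $\sigma$ were not the zero measure, $\mathcal L(\sigma)$ would be strictly positive and non-increasing on $(0,\infty)$, hence bounded below by some $\varepsilon>0$ on $(0,1]$. This would force $w(t)\geq \varepsilon/t$ near the origin, violating the local integrability of $w$ at $0$ that is necessary for $\mathcal L(w)(x)=f(x)-c$ to be finite at any $x>0$. Therefore $\sigma=0$, giving $w\equiv 0$, $f\equiv c$, and finally $g\equiv c$. The step I expect to require the most care is precisely this last one: the CM condition alone does not force $w\equiv 0$ (for instance, $w(t)=1/t$ would make $t^{1-\lambda}w(t)=t^{-\lambda}$ completely monotonic for every $\lambda>0$), so the argument must invoke the mere existence of $\mathcal L(w)$ as a finite function on $(0,\infty)$ to rule out such borderline candidates.
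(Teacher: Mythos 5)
The paper states this proposition without proof, so there is no argument of its own to compare against; what I can do is verify your reasoning, and it is sound. The reduction via $\XL$ (noting $\XL$ fixes constants and is injective) to the claim $\bigcap_{\lambda>0}\mathcal S_\lambda=\{\text{nonnegative constants}\}$ is clean, and from there the chain of observations is correct: the Laplace density $w(t)=t^{\lambda-1}\varphi_\lambda(t)$ is forced to be $\lambda$-independent by uniqueness of the Laplace transform once $c_\lambda=\lim_{x\to\infty}f(x)$ is seen to be $\lambda$-independent; $t^{1-\lambda}w(t)\in\mathrm{CM}$ for all $\lambda>0$ together with closure of CM under pointwise limits gives $tw(t)\in\mathrm{CM}$; and a nonzero CM function $tw(t)$ is bounded below on $(0,1]$, which forces $w(t)\gtrsim 1/t$ there and destroys the finiteness of $\int_0^1 e^{-xt}w(t)\,dt$, i.e.\ of $f(x)-c$. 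Your side remark that $w(t)=1/t$ is a genuine borderline candidate which the CM conditions alone cannot exclude is exactly the right thing to flag — it shows the integrability of $w$ (equivalently, the finiteness of $f$) is doing real work and must be invoked. Two small streamlinings you may wish to note: you do not need Bernstein's theorem in the last step, since any CM function is non-increasing and therefore $tw(t)\geq 1\cdot w(1)>0$ on $(0,1]$ directly once $w\not\equiv 0$; and, if you prefer to avoid $\XL$ altogether, the same argument can be run on $g'$ itself, using $\lambda=1$ to get $g'=\mathcal L(\nu)$ and then the same $\lambda$-family of CM conditions on $x^{1-\lambda}g'(x)$, with the nonnegativity of $g$ supplying the integrability contradiction in place of the finiteness of $f$.
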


\begin{prop}
 Let $\mu$ be a Radon measure on $[0,\infty)$. Then there is a sequence $\{b_n\}$ of functions from  $\mathcal B_{\infty}$ such that
$$
b_n'(x)\, dx \to \mu\
$$
vaguely as $n\to \infty$.

\end{prop}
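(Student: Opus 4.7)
The plan is to construct vague approximants $b_n'\,dx$ of $\mu$ from derivatives of functions in $\mathcal B_\infty$, first handling point masses and then using a density-and-diagonal argument.

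First I would observe that for fixed $a>0$, the renormalised gamma densities
$$
b_n'(x)=\frac{(n/a)^n}{\Gamma(n)}\,x^{n-1}e^{-nx/a}
$$
satisfy $b_n'(x)\,x^{1-n}=\frac{(n/a)^n}{\Gamma(n)}e^{-nx/a}$, which is completely monotonic. Hence the primitive $b_n(x)=\int_0^x b_n'(s)\,ds$ is non-negative, smooth, and lies in $\mathcal B_n\subseteq \mathcal B_\infty$. Since $b_n'(x)\,dx$ is a probability density with mean $a$ and variance $a^2/n$, Chebyshev's inequality yields $b_n'(x)\,dx\to\epsilon_a$ vaguely as $n\to\infty$. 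The case $a=0$ is handled separately by $b_n(x)=1-e^{-nx}\in\mathcal B_1$, whose derivative $ne^{-nx}\,dx$ converges vaguely to $\epsilon_0$. Scaling by a non-negative constant covers $c\,\epsilon_a$ for any $a\geq 0$ and any $c\geq 0$.

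Next I would assemble finite positive combinations $\nu=\sum_{j=1}^m c_j\epsilon_{a_j}$ by adding the approximants built above. If each summand lies in $\mathcal B_{\lambda_j}$ and $\Lambda=\max_j\lambda_j$, then by the inclusion $\mathcal B_{\lambda_j}\subseteq \mathcal B_\Lambda$ (preceding exercise, using that $x^{\lambda_j-\Lambda}$ is CM and products of CM functions are CM) and the fact that sums of CM functions are CM, the total still lies in $\mathcal B_\Lambda\subseteq \mathcal B_\infty$. Finally, any positive Radon measure $\mu$ on $[0,\infty)$ is a vague limit of such finite Dirac combinations: partition $[0,N]$ into cells $I_k^N$ of diameter at most $1/N$, place the mass $\mu(I_k^N)$ at a sample point $a_k^N\in I_k^N$, and use uniform continuity of any $f\in C_c([0,\infty))$ on compact sets together with local finiteness of $\mu$ to pass to the limit $N\to\infty$. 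A standard diagonal extraction across these two approximation layers then produces the required sequence $\{b_n\}\subseteq \mathcal B_\infty$.

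The main obstacle is the first step: identifying a family inside $\mathcal B_\infty$ capable of concentrating at an arbitrary interior point $a>0$. Once one recognises that the weight $x^{n-1}$ permitted by membership in $\mathcal B_n$ combines with the completely monotonic exponential $e^{-nx/a}$ to form precisely a gamma density, everything else reduces to routine measure-theoretic approximation. It is worth noting that non-integer $\lambda$ is not needed here, as the integer family $\{\mathcal B_n\}_{n\in\mathbb N}$ already suffices for concentration, and that a single choice of $\Lambda$ in Step 2 is always possible because only finitely many Dirac components appear at each stage of the diagonal.
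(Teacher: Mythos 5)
Your proof is correct and follows essentially the same route the paper indicates: the paper states exactly your key lemma---that the gamma densities $g_n(x)=n^n x^{n-1}e^{-nx}/\Gamma(n)$ concentrate at $\epsilon_1$---and leaves the rescaling to $\epsilon_a$, the addition of finite Dirac combinations within a single $\mathcal B_\Lambda$, the approximation of a general Radon measure by such combinations, and the diagonal extraction to the reader, all of which you have filled in correctly (including the separate treatment of $a=0$ via $1-e^{-nx}\in\mathcal B_1$).
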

The proof of this result uses the following lemma.
\begin{prop}
 Let $g_n(x)=n^nx^{n-1}e^{-nx}/\Gamma(n)$.  (The Gamma distribution  with shape parameter $n$ and  scale parameter $1/n$.) Then
 $\{g_n(x)\, dx\}$ converges weak-star to the point mass at $1$.
\end{prop}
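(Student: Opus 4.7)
The plan is to use the fact that $g_n(x)\,dx$ is the Gamma distribution with mean $1$ and variance $1/n$, and deduce concentration at the mean via Chebyshev's inequality. The key inputs are simply the first two moments of $g_n$.

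First I would verify that $g_n$ is a probability density and compute its moments. By the substitution $u = nx$, the $k$-th moment equals
$$\int_0^\infty x^k g_n(x)\,dx = \frac{n^n}{\Gamma(n)} \int_0^\infty x^{n+k-1} e^{-nx}\,dx = \frac{\Gamma(n+k)}{n^k\,\Gamma(n)},$$
so that the total mass is $1$, the mean is $1$, and the second moment is $(n+1)/n$, giving variance $\sigma_n^2 = 1/n$. Chebyshev's inequality then yields, for every $\epsilon > 0$,
$$\int_{\{|x-1|>\epsilon\}} g_n(x)\,dx \leq \frac{1}{n\epsilon^2}.$$

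Now fix $\varphi \in C_c([0,\infty))$ and $\eta > 0$. By continuity of $\varphi$ at $1$, choose $\epsilon > 0$ so that $|\varphi(x)-\varphi(1)| < \eta$ whenever $|x-1| \leq \epsilon$. Splitting the integral,
$$\left| \int_0^\infty \varphi(x) g_n(x)\,dx - \varphi(1) \right| \leq \int_0^\infty |\varphi(x)-\varphi(1)| g_n(x)\,dx \leq \eta + \frac{2\|\varphi\|_\infty}{n\epsilon^2},$$
which is less than $2\eta$ for $n$ large. Since $\eta$ was arbitrary, this establishes $g_n(x)\,dx \to \epsilon_1$ vaguely (and in fact weakly against all bounded continuous test functions).

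An alternative in keeping with the paper's Laplace transform viewpoint would be to compute directly
$$\mathcal L(g_n)(s) = \left(1+\frac{s}{n}\right)^{-n} \longrightarrow e^{-s} = \mathcal L(\epsilon_1)(s), \qquad s \geq 0,$$
and invoke the continuity theorem for Laplace transforms of probability measures. Neither route presents a serious obstacle: the only point requiring care in the Chebyshev argument is the moment computation, which is routine, while the Laplace transform route relies on a standard continuity theorem. The result is ultimately a concrete manifestation of the weak law of large numbers for Gamma variates with shrinking scale.
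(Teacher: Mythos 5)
Your proof is correct, and both arguments you give are valid. The paper states this lemma without proof, so there is no authorial route to compare against; the Chebyshev/second-moment concentration argument you develop in full, and the alternative via $\mathcal{L}(g_n)(s) = (1+s/n)^{-n} \to e^{-s}$ together with the continuity theorem, are the two standard approaches, and either suffices. One small point worth noting: the paper phrases the conclusion as ``weak-star'' even though $[0,\infty)$ is not compact (earlier the paper reserves that term for the compact case and says ``vague'' otherwise); since each $g_n\,dx$ and the limit $\epsilon_1$ are all probability measures, no mass escapes to infinity, so vague convergence against $C_c$, which is what you prove, automatically upgrades to weak convergence against bounded continuous functions, as you correctly observe in your closing remark.
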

\subsection{Higher order Thorin-Bernstein functions }

 A Bernstein function $g$ of the form
 $$
 g(x)=a+bx+\int_0^{\infty}(1-e^{-xt})\varphi(t)\, dt
$$
is called a
 \begin{itemize}
  \item complete Bernstein function if $\varphi$ is CM
  \item Thorin-Bernstein function if $t\varphi(t)$ is CM
  \end{itemize}
  These subclasses of ordinary Bernstein functions are used in probability theory.
Our aim is to study them in the framework of generalized Bernstein functions of positive order.

Let us recall that
  $\varphi$ is $\text{CM}(\alpha)$ if $t^{\alpha}\varphi(t)$ is CM.

\begin{defn}
 A function $f:(0,\infty)\to \mathbb R$ is a $(\lambda,\alpha)$-Thorin-Bernstein function, if $\lambda>0$ and $\alpha<\lambda+1$ are such that
 \begin{equation*}
  f(x)=a x^{\lambda}+b+\int_0^{\infty}\gamma(\lambda,xt)\varphi(t)\, dt,
\end{equation*}
where $a$ and $b$ are non-negative numbers, and $\varphi$ is CM($\alpha$). The class of these functions is denoted by $\mathcal T_{\lambda,\alpha}$.
\end{defn}
We shall call these classes of functions {\em higher order Thorin-Bernstein functions}.
The complete Bernstein functions mentioned above correspond to $\lambda=1$, $\alpha=0$ and the Thorin-Bernstein functions correspond to $\lambda=1$, $\alpha=1$.

There is a close connection between higher order Thorin-Bernstein functions and generalized Stieltjes functions of positive order:

\begin{prop}
 For a function $f:(0,\infty)\to [0,\infty)$ we have
 $$ f\in \mathcal T_{\lambda,\alpha} \ \Leftrightarrow \ x^{1-\lambda}f'(x)\in \mathcal S_{\lambda+1-\alpha}.
 $$
\end{prop}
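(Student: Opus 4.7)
The plan is to compute $x^{1-\lambda}f'(x)$ directly from the defining integral representation of $\mathcal T_{\lambda,\alpha}$ and match the result with the Laplace-transform characterization of $\mathcal S_{\mu}$ (for $\mu = \lambda+1-\alpha$) given in the third exercise of the previous lecture.

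First I would handle the forward implication. Assuming $f\in \mathcal T_{\lambda,\alpha}$, differentiate the integrand using $\partial_x \gamma(\lambda,xt) = t\cdot (xt)^{\lambda-1}e^{-xt} = t^{\lambda}x^{\lambda-1}e^{-xt}$, which is justified by dominated convergence once one checks the integrability condition on $\varphi$. This gives
\[
f'(x) = a\lambda x^{\lambda-1} + x^{\lambda-1}\int_0^{\infty} e^{-xt} t^{\lambda}\varphi(t)\, dt,
\]
so
\[
x^{1-\lambda}f'(x) = a\lambda + \int_0^{\infty} e^{-xt}\, t^{(\lambda+1-\alpha)-1}\bigl(t^{\alpha}\varphi(t)\bigr)\, dt.
\]
Since $\varphi\in\mathrm{CM}(\alpha)$ means $\psi(t):=t^{\alpha}\varphi(t)\in\mathrm{CM}$, the recalled characterization of $\mathcal S_{\lambda+1-\alpha}$ (with $c = a\lambda$ and completely monotonic density $\psi$) shows $x^{1-\lambda}f'(x)\in\mathcal S_{\lambda+1-\alpha}$.

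For the converse, suppose $g(x) := x^{1-\lambda}f'(x) \in \mathcal S_{\lambda+1-\alpha}$. By the same exercise there exist $c\geq 0$ and $\psi\in\mathrm{CM}$ with
\[
g(x) = c + \int_0^{\infty} e^{-xt}\, t^{\lambda-\alpha}\psi(t)\, dt.
\]
Set $\varphi(t) = t^{-\alpha}\psi(t)$, so $\varphi\in\mathrm{CM}(\alpha)$. Then $f'(x) = cx^{\lambda-1} + x^{\lambda-1}\int_0^{\infty}e^{-xt}t^{\lambda}\varphi(t)\,dt$, and integrating from $0$ to $x$ (and setting $b = f(0+)$, whose finiteness will need to be inspected) produces
\[
f(x) = b + \frac{c}{\lambda}x^{\lambda} + \int_0^x s^{\lambda-1}\!\int_0^{\infty} e^{-st}t^{\lambda}\varphi(t)\,dt\,ds.
\]
A Fubini swap together with the substitution $u=st$ gives $\int_0^x s^{\lambda-1}e^{-st}\,ds = t^{-\lambda}\gamma(\lambda,xt)$, whence
\[
f(x) = b + \frac{c}{\lambda}\,x^{\lambda} + \int_0^{\infty}\gamma(\lambda,xt)\,\varphi(t)\, dt,
\]
exhibiting $f$ as an element of $\mathcal T_{\lambda,\alpha}$ with $a = c/\lambda$.

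The main technical obstacle will be the integrability bookkeeping: in the forward direction one must verify that differentiation under the integral sign is legitimate, and in the converse one must confirm both that the Fubini exchange is admissible and that the antiderivative constant $b$ is non-negative and finite (using $f\geq 0$ and the behaviour of $\gamma(\lambda,xt)$ as $x\to 0^+$, which vanishes at the origin so that $b=f(0+)\geq 0$). The algebraic core of the argument is essentially the single identity $\partial_x\gamma(\lambda,xt) = t^{\lambda}x^{\lambda-1}e^{-xt}$, which is precisely what converts the parameter $\lambda$ of $\mathcal T$ into the parameter $\lambda+1-\alpha$ of $\mathcal S$.
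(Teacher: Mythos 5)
The paper states this proposition without a proof, so there is no author's argument to compare against; I can only evaluate your proposal on its own terms. Your argument is correct and is the natural proof. The single identity $\partial_x\gamma(\lambda,xt)=t^{\lambda}x^{\lambda-1}e^{-xt}$ turns the $\mathcal T_{\lambda,\alpha}$ representation into
\[
x^{1-\lambda}f'(x)=a\lambda+\int_0^{\infty}e^{-xt}\,t^{(\lambda+1-\alpha)-1}\bigl(t^{\alpha}\varphi(t)\bigr)\,dt,
\]
and since $t^{\alpha}\varphi(t)\in\mathrm{CM}$ by definition of $\mathrm{CM}(\alpha)$, the Laplace-type characterization of $\mathcal S_{\mu}$ from the earlier exercise gives $x^{1-\lambda}f'(x)\in\mathcal S_{\lambda+1-\alpha}$ with $c=a\lambda$. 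The converse runs the computation backwards via Tonelli (the integrands are non-negative, so the swap is free) and the substitution $\int_0^x s^{\lambda-1}e^{-st}\,ds=t^{-\lambda}\gamma(\lambda,xt)$. Your remark about $b=f(0+)$ is handled automatically: since $g\geq0$ (it is a generalized Stieltjes function) and $x^{\lambda-1}>0$, $f'\geq0$, so $f$ is nondecreasing and bounded below by $0$, whence $f(0+)$ exists, is finite, and is non-negative. The only cosmetic addition I would make is to point out explicitly that $\gamma(\lambda,xt)\to0$ as $x\to0^+$ for each fixed $t$, so that the constant $b$ is indeed the limit $f(0+)$ and not some other additive constant. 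The example in the paper, $x^{\lambda}{}_2F_1(\nu,\lambda;1+\lambda;-x)\in\mathcal T_{\lambda,\lambda+1-\nu}$, serves as a useful sanity check: there $x^{1-\lambda}f'(x)=\lambda(1+x)^{-\nu}\in\mathcal S_{\nu}=\mathcal S_{\lambda+1-(\lambda+1-\nu)}$, consistent with your computation.
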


Returning to the example above about ${}_2F_1$ we see that in fact
$$
 x^{\lambda}{}_2F_1(\nu,\lambda; 1+\lambda;-x)=\lambda \int_0^x\frac{u^{\lambda-1}}{(1+u)^\nu}du\in \mathcal T_{\lambda, \lambda+1-\nu}.
 $$
We end this section by investigating the family $\{\mathcal T_{\lambda,\alpha}\}_{\alpha<\lambda +1}$. By definition, $\mathcal T_{\lambda,\alpha}\subset \mathcal B_{\lambda}$ for $\alpha<\lambda+1$, and it is also easy to show that
$T_{\lambda,\alpha}$ decreases as $\alpha$ increases.
The question of the ``size'' of $\cup_{\alpha<\lambda +1}\mathcal T_{\lambda,\alpha}$ is answered below.

The incomplete Beta function is defined as
$$
B(a,b,x)=\int_0^xt^{a-1}(1-t)^{b-1}\, dt
$$
for $a>0$, $b\in \mathbb R$ and $x\in [0,1)$.

\begin{prop}
Let $f\in \mathcal B_{\lambda}$ correspond to the triple $(a,b,\mu)$. Then there is a sequence from $\cup_{\alpha<\lambda +1}\mathcal T_{\lambda,\alpha}$ that converges pointwise to $f$. One may in fact take the sequence $\{f_n\}$ to be
$$
f_n(x)=ax^{\lambda}+b+\frac{\Gamma(\lambda+n)}{\Gamma(n)}\,\int_0^{\infty}  B\Big(\lambda, n, \frac{x}{x+n/t}\Big)\,\frac{d\mu(t)}{t^{\lambda}},
$$
where $B$ is the incomplete Beta function.
\end{prop}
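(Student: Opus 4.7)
The plan has two main steps. First, I would identify the precise subclass of $\mathcal T_{\lambda,\alpha}$ to which each $f_n$ belongs, using the characterization $f\in\mathcal T_{\lambda,\alpha}\Leftrightarrow x^{1-\lambda}f'(x)\in\mathcal S_{\lambda+1-\alpha}$ established above. A direct computation with $y(x)=xt/(xt+n)$, $y'(x)=nt/(xt+n)^2$, and $\partial_y B(\lambda,n,y)=y^{\lambda-1}(1-y)^{n-1}$ gives
\[
\partial_x B\Bigl(\lambda,n,\tfrac{xt}{xt+n}\Bigr) = \frac{x^{\lambda-1}t^\lambda\,n^n}{(xt+n)^{\lambda+n}},
\]
so that
\[
x^{1-\lambda}f_n'(x) = a\lambda + \frac{\Gamma(\lambda+n)\,n^n}{\Gamma(n)}\int_0^\infty\frac{d\mu(t)}{(xt+n)^{\lambda+n}}.
\]
Pushing $d\mu(t)/t^{\lambda+n}$ forward under $t\mapsto u=n/t$ turns the integral into $n^{-(\lambda+n)}\int_0^\infty(x+u)^{-(\lambda+n)}\,d\rho(u)$ for a positive Radon measure $\rho$, exhibiting $x^{1-\lambda}f_n'(x)$ as a generalized Stieltjes function of order $\lambda+n$. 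By the cited characterization, $f_n\in\mathcal T_{\lambda,1-n}$, and since $1-n<\lambda+1$, also $f_n\in\bigcup_{\alpha<\lambda+1}\mathcal T_{\lambda,\alpha}$.

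Second, I would prove $f_n(x)\to f(x)$ pointwise via dominated convergence applied to the integral representation of $f_n-ax^\lambda-b$. The key pointwise asymptotic is
\[
\frac{\Gamma(\lambda+n)}{\Gamma(n)}B\Bigl(\lambda,n,\tfrac{xt}{xt+n}\Bigr) \longrightarrow \gamma(\lambda,xt), \qquad n\to\infty,
\]
for every $x,t>0$. To see this, substitute $u=v/n$ in the Beta integral to obtain
\[
B\Bigl(\lambda,n,\tfrac{xt}{xt+n}\Bigr) = n^{-\lambda}\int_0^{nxt/(xt+n)} v^{\lambda-1}\bigl(1-\tfrac{v}{n}\bigr)^{n-1}\,dv,
\]
and combine the elementary limits $nxt/(xt+n)\to xt$, $(1-v/n)^{n-1}\to e^{-v}$, and Stirling's $\Gamma(\lambda+n)/(n^\lambda\Gamma(n))\to 1$. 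For the dominator I would use the two elementary estimates $B(\lambda,n,y)\leq B(\lambda,n)=\Gamma(\lambda)\Gamma(n)/\Gamma(\lambda+n)$ and $B(\lambda,n,y)\leq y^\lambda/\lambda\leq (xt/n)^\lambda/\lambda$, yielding
\[
\frac{\Gamma(\lambda+n)}{\Gamma(n)}B\Bigl(\lambda,n,\tfrac{xt}{xt+n}\Bigr) \leq \min\bigl(\Gamma(\lambda),\,C(xt)^\lambda\bigr)
\]
uniformly in $n$, with $C$ a bound on the Stirling ratio. Integrability of this majorant against $d\mu(t)/t^\lambda$ follows by splitting at $t=1$: the second estimate contributes $Cx^\lambda$ against the finite measure $\mu$ restricted to $(0,1]$, while the first contributes $\Gamma(\lambda)/t^\lambda\leq 2^\lambda\Gamma(\lambda)/(1+t)^\lambda$ on $[1,\infty)$, which is $\mu$-integrable by hypothesis.

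The main obstacle is the uniform-in-$n$ domination just described; securing it requires the (standard but essential) Stirling estimate for $\Gamma(\lambda+n)/(n^\lambda\Gamma(n))$. Once dominated convergence is justified, combining it with the pointwise asymptotic immediately yields $f_n(x)\to f(x)$ for every $x>0$, completing the proof.
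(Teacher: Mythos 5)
The paper itself states this proposition without proof, so there is no in-text argument to compare against; I evaluate your proposal on its own terms, and it is correct in all essential respects. Both halves of your plan are sound: the computation $\partial_x B(\lambda,n,\tfrac{xt}{xt+n}) = x^{\lambda-1}t^{\lambda}n^{n}/(xt+n)^{\lambda+n}$ is right, and after the change of variable $u=n/t$ (and absorbing the harmless power of $n$ into the measure) you correctly exhibit $x^{1-\lambda}f_n'(x)$ as an element of $\mathcal S_{\lambda+n}$, so the stated characterization places $f_n$ in $\mathcal T_{\lambda,\,1-n}\subset\bigcup_{\alpha<\lambda+1}\mathcal T_{\lambda,\alpha}$. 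The pointwise-limit step is also right: the substitution $u=v/n$ turns $n^{\lambda}B(\lambda,n,\tfrac{xt}{xt+n})$ into $\int_0^{nxt/(xt+n)}v^{\lambda-1}(1-v/n)^{n-1}\,dv\to\gamma(\lambda,xt)$, and Stirling gives $\Gamma(\lambda+n)/(n^{\lambda}\Gamma(n))\to 1$; your two uniform bounds $B(\lambda,n,y)\leq B(\lambda,n)$ and $B(\lambda,n,y)\leq y^{\lambda}/\lambda\leq (xt/n)^{\lambda}/\lambda$ combine into exactly the right majorant $\min(\Gamma(\lambda),C(xt)^{\lambda})$, whose integrability against $d\mu(t)/t^{\lambda}$ you correctly verify by splitting at $t=1$.

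Three minor points worth tightening if you write this up fully. First, to identify $x^{1-\lambda}f_n'(x)$ you differentiate under the integral sign, which needs a brief local-domination remark (it is easy here because $n^{n}/(xt+n)^{\lambda+n}\leq C_{n,\lambda}/(1+t)^{\lambda}$ uniformly for $x$ in a compact subset of $(0,\infty)$). Second, describing $\rho$ as the pushforward of $d\mu(t)/t^{\lambda+n}$ under $t\mapsto n/t$ and also writing the prefactor $n^{-(\lambda+n)}$ is slightly off: with that pushforward the prefactor disappears, while if you push $d\mu$ itself forward and multiply by $u^{\lambda+n}$ you get the factor $n^{-(\lambda+n)}$. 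Either bookkeeping is fine, but one should also check the membership condition $\int_0^{\infty}d\rho(u)/(1+u)^{\lambda+n}<\infty$, which reduces to $\int_0^{\infty}d\mu(t)/(t+n)^{\lambda+n}\leq\int_0^{\infty}d\mu(t)/(1+t)^{\lambda}<\infty$. Third, the characterization of $\mathcal T_{\lambda,\alpha}$ used is stated for $f:(0,\infty)\to[0,\infty)$, so one should note $f_n\geq 0$, which is clear since $a,b\geq 0$ and the incomplete Beta function is non-negative. (There is also a small inconsistency in the paper's own notation: the integral representation of $\mathcal B_{\lambda}$ writes $a+bx^{\lambda}$ while the displayed $f_n$ has $ax^{\lambda}+b$; you sensibly treat this as a labeling issue and work with the integral term, which is the substance of the statement.)
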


\subsection{Returning to Euler's gamma function}
We shall end these lectures where we began, namely investigating the Gamma function. This time we shall illustrate the real variable methods outlined above.
 Many authors (going back to Ismail and others in the 1980's) have investigated ratios of gamma functions, such as
 $$
 \frac{\Gamma(x)}{\Gamma(x+a)}\quad \text{or}\quad \frac{\Gamma(x)\Gamma(x+a+b)}{\Gamma(x+a)\Gamma(x+b)}.
 $$
 The $\psi$-function, defined as
 $$
 \psi(x)=(\log \Gamma)'(x)=\frac{\Gamma'(x)}{\Gamma(x)}
 $$
and its integral representation
$$
\psi(x)=-\gamma+\int_0^{\infty}\frac{e^{-t}-e^{-xt}}{1-e^{-t}}\, dt \quad \text{for }\ x>0
$$
play important roles in most of these investigations.

  A computation, using the integral representation of $\psi$, gives ($a$ and $b$ being positive numbers)
  $$
  \log\frac{\Gamma(x)\Gamma(x+a+b)}{\Gamma(x+a)\Gamma(x+b)}
  =\int_0^{\infty}te^{-xt}\frac{(1-e^{-at})(1-e^{-bt})}{t^2(1-e^{-t})}\, dt.
  $$
  The elementary computation
  $
  (1-e^{-at})/t=\int_0^ae^{-st}\, ds
  $ shows that $(1-e^{-at})/t$ is CM and hence the product
  $$
  \frac{(1-e^{-at})(1-e^{-bt})}{t^2(1-e^{-t})}=\frac{1-e^{-at}}{t}\frac{1-e^{-bt}}{t}\frac{1}{1-e^{-t}}
  $$
  is CM. Referring to the relation above and Kristiansen's theorem we obtain the proof of the following result.
 \begin{prop}
  The function
  $$
  x\mapsto \log\frac{\Gamma(x)\Gamma(x+a+b)}{\Gamma(x+a)\Gamma(x+b)}
  $$
  belongs to $\mathcal S_2$ and is in particular LCM.
  \end{prop}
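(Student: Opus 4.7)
The key observation is that the displayed integral identity
$$
\log\frac{\Gamma(x)\Gamma(x+a+b)}{\Gamma(x+a)\Gamma(x+b)}=\int_0^{\infty}e^{-xt}\,t\cdot\frac{(1-e^{-at})(1-e^{-bt})}{t^2(1-e^{-t})}\,dt
$$
already matches the Laplace-transform characterization $f(x)=c+\int_0^{\infty}e^{-xt}t^{\lambda-1}\varphi(t)\,dt$ of $\mathcal S_{\lambda}$ (exercise preceding Widder's theorem) with $\lambda=2$, $c=0$, and
$$
\varphi(t)=\frac{(1-e^{-at})(1-e^{-bt})}{t^2(1-e^{-t})}.
$$
So the plan reduces to verifying that $\varphi \in \text{CM}$; from there membership in $\mathcal S_2$ is immediate, and Kristiansen's theorem ($\mathcal S_2\subseteq \text{LCM}$) delivers the ``in particular LCM'' conclusion.

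To see that $\varphi$ is completely monotonic I would write it as a product of three CM functions and invoke the fact (established in an earlier exercise) that the product of CM functions is CM. The first two factors are $(1-e^{-at})/t$ and $(1-e^{-bt})/t$; each is CM via the representation $(1-e^{-ct})/t=\int_0^{c}e^{-st}\,ds$, which exhibits it as an average of the CM functions $t\mapsto e^{-st}$. The third factor is $1/(1-e^{-t})$, which by the earlier Laplace-transform exercise equals $\mathcal L\bigl(\sum_{n=0}^{\infty}\epsilon_n\bigr)(t)$ and is therefore CM as the Laplace transform of a positive measure (Bernstein's theorem). Multiplying the three yields $\varphi \in \text{CM}$.

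The only step that needs a little care is justifying the displayed integral identity itself, which the excerpt asserts ``by a computation using the integral representation of $\psi$''. To fill this in I would start from
$$
\log\frac{\Gamma(x+c)}{\Gamma(x)}=\int_0^{c}\psi(x+s)\,ds=\int_0^{c}\!\!\int_0^{\infty}\frac{e^{-t}-e^{-(x+s)t}}{1-e^{-t}}\,dt\,ds,
$$
swap the order of integration (justified by positivity/Fubini after grouping the four $\log\Gamma$ terms so the divergent $-\gamma$ and $e^{-t}/(1-e^{-t})$ pieces cancel), and carry out the inner $s$-integral to produce the factor $(1-e^{-ct})/t$. Doing this for the combination $\log\Gamma(x)-\log\Gamma(x+a)-\log\Gamma(x+b)+\log\Gamma(x+a+b)$ gives the product $(1-e^{-at})(1-e^{-bt})/t^2$ in the numerator and $1-e^{-t}$ in the denominator, matching the stated identity.

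The main obstacle is, as usual, the interchange of integration and the bookkeeping of the $\psi$-integral representation needed to derive the Malmsten-type identity for $\log$ of the cross-ratio of Gamma values; once that identity is in place, the rest is a direct application of the CM factorization and of Kristiansen's theorem, both available from the earlier parts of the text. Nothing in this step uses anything beyond what is stated in the excerpt.
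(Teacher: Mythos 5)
Your proof follows the paper's argument exactly: the same Malmsten-type integral identity, the same factorization of $\varphi(t)=(1-e^{-at})(1-e^{-bt})/(t^2(1-e^{-t}))$ into three CM factors, the same appeal to the exercise characterizing $\mathcal S_2$ via $f(x)=c+\int_0^\infty e^{-xt}t\varphi(t)\,dt$ with $\varphi\in\mathrm{CM}$, and the same final invocation of Kristiansen's theorem. You merely spell out in more detail the derivation of the integral identity (integrating the $\psi$-representation over $s\in[0,c]$), which the paper leaves as ``a computation''.
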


We shall investigate a similar function $g_{\lambda}$ given by
$$
 g_{\lambda}(x)=\frac{x^{\lambda}\Gamma(x)}{\Gamma(x+\lambda)}.
 $$

  \begin{figure}
  \begin{center}
  \includegraphics[scale=0.5]{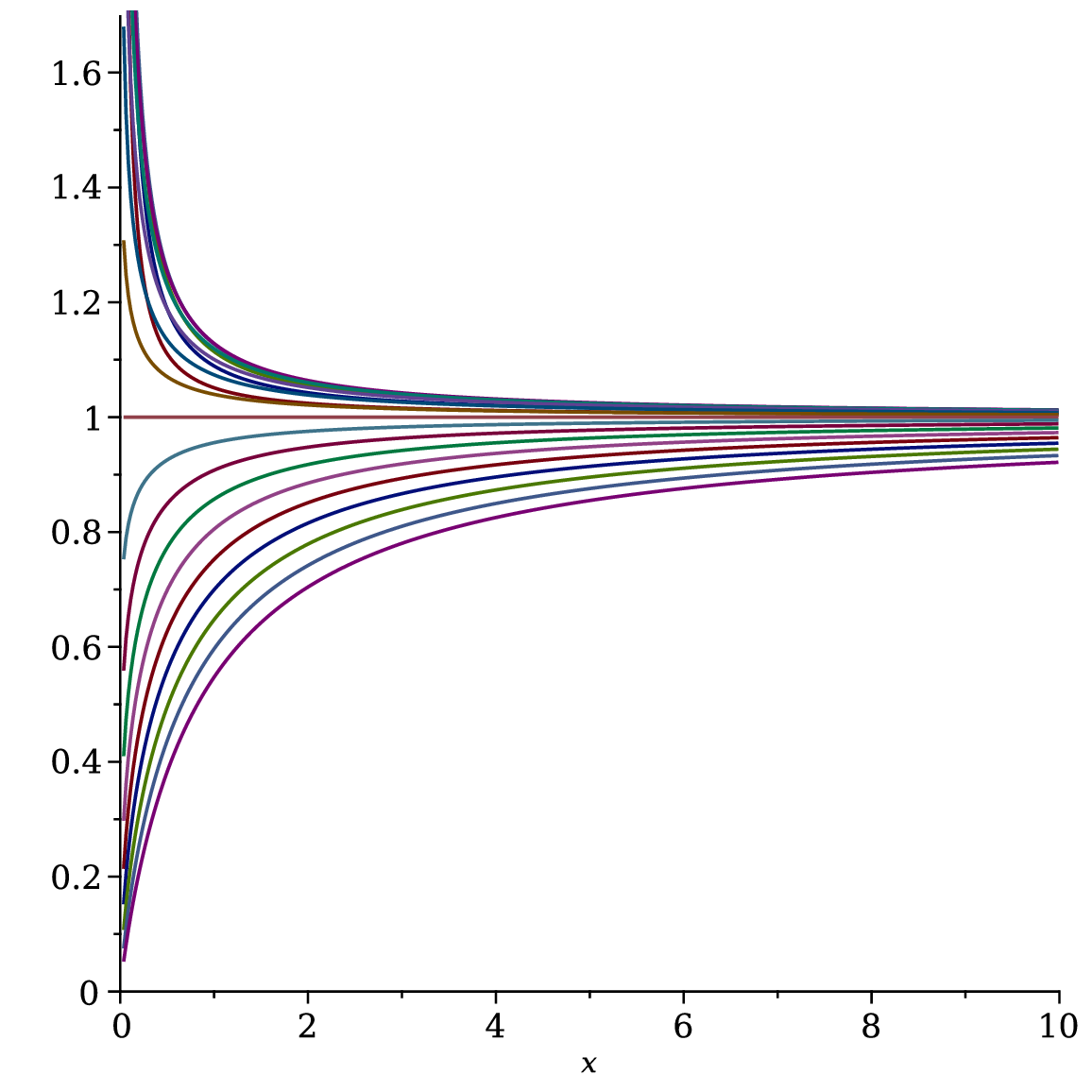}
 \end{center}
 \caption{$g_{\lambda}$ for various values of $\lambda$}
 \label{fig:glambda}
 \end{figure}

 Referring to Figure \ref{fig:glambda}, the function is decreasing for $\lambda<1$, and increasing for $\lambda>1$. The functions $g_{\lambda}$ were also considered by Ismail, Lorch and Muldoon who showed that $g_{\lambda}$ is LCM for $\lambda<1$. We shall strengthen this result for $\lambda<1$ and give analogues for $\lambda>1$.

 \begin{prop} The following assertions  hold.
  \begin{enumerate}
   \item For $\lambda<1$, $\log g_{\lambda}\in \mathcal S_2\setminus \cup_{\tau<2}\mathcal{S}_{\tau}$.
   \item For $\lambda>1$, $g_{\lambda}\in \mathcal{B}_{\lambda-1}\setminus \cup_{\tau<\lambda-1}\mathcal{B}_{\tau}$.
   \item For $\lambda>1$, $-\log g_{\lambda}\in \mathcal{S}_2\setminus \cup_{\tau<2}\mathcal{S}_{\tau}$.
  \end{enumerate}
\end{prop}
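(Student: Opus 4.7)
I would begin from Gauss's integral formula $\psi(x)=-\gamma+\int_{0}^{\infty}(e^{-t}-e^{-xt})/(1-e^{-t})\,dt$, which gives
\[
(\log g_{\lambda})'(x)=\frac{\lambda}{x}+\psi(x)-\psi(x+\lambda)=\int_{0}^{\infty}e^{-xt}\Bigl(\lambda-\frac{1-e^{-\lambda t}}{1-e^{-t}}\Bigr)\,dt.
\]
Since $g_{\lambda}(\infty)=1$, integrating from $\infty$ down to $x$ yields $\log g_{\lambda}(x)=\int_{0}^{\infty}e^{-xt}\,t\,\varphi_{\lambda}(t)\,dt$ with
\[
\varphi_{\lambda}(t):=\frac{1}{t^{2}}\Bigl(\frac{1-e^{-\lambda t}}{1-e^{-t}}-\lambda\Bigr).
\]
By the Laplace characterisation $\mathcal{S}_{2}=\{c+\int_{0}^{\infty}e^{-xt}\,t\,\varphi(t)\,dt:c\geq 0,\,\varphi\in\mathrm{CM}\}$ from an earlier exercise, Parts~(1) and~(3) both reduce to showing that $\varphi_{\lambda}$ is CM for $\lambda\in(0,1)$ and that $-\varphi_{\lambda}$ is CM for $\lambda>1$.

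For this I would rewrite $\varphi_{\lambda}$ as a Laplace transform of an explicit sawtooth. Using $1/(1-e^{-t})=\sum_{k\geq 0}e^{-kt}$ together with $e^{-kt}-e^{-(k+\lambda)t}=t\int_{k}^{k+\lambda}e^{-st}\,ds$,
\[
\frac{1-e^{-\lambda t}}{1-e^{-t}}-\lambda=t\int_{0}^{\infty}\bigl(\chi_{\lambda}(s)-\lambda\bigr)e^{-st}\,ds,\qquad \chi_{\lambda}(s):=\sum_{k\geq 0}\mathbf{1}_{[k,k+\lambda)}(s).
\]
Since $\chi_{\lambda}-\lambda$ has zero mean over each unit interval (eventually in $s$), one integration by parts gives $\varphi_{\lambda}(t)=\mathcal{L}(G_{\lambda})(t)$ where
\[
G_{\lambda}(s)=\int_{0}^{s}\bigl(\chi_{\lambda}(u)-\lambda\bigr)\,du
\]
is a continuous, piecewise-linear, eventually periodic function with $G_{\lambda}(0)=0$. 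A direct case analysis shows $G_{\lambda}\geq 0$ on $[0,\infty)$ when $\lambda\in(0,1)$ (a sawtooth oscillating between $0$ and $\lambda(1-\lambda)$) and $G_{\lambda}\leq 0$ on $[0,\infty)$ when $\lambda>1$; in the harder direction the cleanest route is to differentiate, noting that $-\partial_{\lambda}\varphi_{\lambda}(t)=\mathcal{L}\bigl(s-(\lfloor s-\lambda\rfloor+1)\mathbf{1}_{\{s\geq\lambda\}}\bigr)(t)$ has argument $\geq\lambda-1>0$ on $[\lambda,\infty)$ and $=s\geq 0$ on $[0,\lambda)$, and then integrating up from the trivial case $\lambda=1$ (where $g_{1}\equiv 1$) expresses $-\varphi_{\lambda}$ as a positive superposition of CM functions. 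Bernstein's theorem then delivers Parts~(1) and~(3).

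For Part~(2) I would instead use $\Gamma(x)/\Gamma(x+\lambda)=\Gamma(\lambda)^{-1}\int_{0}^{\infty}e^{-xu}(1-e^{-u})^{\lambda-1}\,du$. One integration by parts absorbs a factor of $x$, and after Fubini one recasts
\[
g_{\lambda}(x)=\int_{0}^{\infty}\gamma(\lambda-1,\,xt)\,\frac{d\nu(t)}{t^{\lambda-1}},
\]
which is the integral representation characterising $\mathcal{B}_{\lambda-1}$ provided $\nu\geq 0$. Tracing the identification, $\nu$ turns out to be the Stieltjes measure of $-M$, where $M(u)=\Gamma(\lambda-1)^{-1}\bigl((1-e^{-u})/u\bigr)^{\lambda-2}e^{-u}$. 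Computing $(\log M)'(u)=(\lambda-2)\bigl[e^{-u}/(1-e^{-u})-1/u\bigr]-1$ and observing that the bracketed quantity lies in $(-1/2,0)$ on $(0,\infty)$, the cases $\lambda\geq 2$ and $\lambda\in(1,2)$ both yield $(\log M)'<0$, so $M$ is strictly decreasing, $\nu\geq 0$, and $g_{\lambda}\in\mathcal{B}_{\lambda-1}$. The ``$\setminus\bigcup_{\tau<\cdot}$'' sharpness claims all come from asymptotic matching: in Part~(2), $g_{\lambda}(x)\sim x^{\lambda-1}/\Gamma(\lambda)$ at $x=0^{+}$ blocks any representation of lower order; in Parts~(1) and~(3), the singularity $\varphi_{\lambda}(t)\sim \lambda(1-\lambda)/(2t)$ as $t\to 0^{+}$ is incompatible with $t^{2-\tau}\varphi_{\lambda}$ being CM for $\tau<2$. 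The principal technical obstacle throughout is the sign analysis of $G_{\lambda}$ (equivalently, the $\partial_{\lambda}$-argument) over the full parameter range — elementary in spirit but genuinely case-sensitive.
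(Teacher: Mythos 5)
Your proposal addresses all three assertions together with the sharpness claims, whereas the paper only indicates one direction of assertion (2); for that overlap the two arguments are genuinely different. The paper writes $\sigma_\lambda(x)=(\log g_\lambda)'(x)=\int_0^\infty e^{-xt}\xi(t)\,dt$ with $\xi(t)=\lambda-\tfrac{1-e^{-\lambda t}}{1-e^{-t}}$, observes that $\xi$ is positive and increasing for $\lambda>1$ so that $\sigma_\lambda\in\text{CM}(1)$ by the earlier characterization of $\text{CM}(1)$, and then concludes from the factorization $x^{2-\lambda}g_\lambda'(x)=\tfrac{\Gamma(x+1)}{\Gamma(x+\lambda)}\cdot x\sigma_\lambda(x)$ as a product of two completely monotonic functions. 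You instead start from the Beta integral for $\Gamma(x)/\Gamma(x+\lambda)$, integrate by parts to absorb a power of $x$, and construct the representing measure $\nu$ for the $\mathcal B_{\lambda-1}$ integral form directly, reducing positivity of $\nu$ to monotonicity of a single scalar function $M$ via $(\log M)'<0$. Both routes establish that $x^{2-\lambda}g_\lambda'$ is completely monotonic, but the paper exploits closure of $\text{CM}$ under products, whereas your route produces the measure explicitly, which in turn feeds your sharpness argument via the boundary asymptotics at $x\to0^+$. Two caveats. First, tracing the identification carefully gives $d\nu(t)=-t^{\lambda-1}\,dM(t)$ (so $M(w)=\int_w^\infty t^{1-\lambda}\,d\nu(t)$), not $d\nu=-dM$; this is immaterial since the conclusion still reduces to $M$ being decreasing, but you should correct it. Second, for parts (1), (3) and all the $\setminus\bigcup_{\tau}$ statements the paper supplies no argument at all, so there is nothing to compare against; your reduction to the sign of the sawtooth $G_\lambda=\int_0^{\cdot}(\chi_\lambda-\lambda)$ and the small-$t$ expansion $\varphi_\lambda(t)\sim\lambda(1-\lambda)/(2t)$ are both sensible devices, but as you acknowledge, the full sign analysis of $G_\lambda$ for general $\lambda>1$ (or the $\partial_\lambda$ superposition argument offered as a cleaner alternative) is still a sketch rather than a complete proof and would need to be written out case by case.
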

\begin{proof}
We shall only indicate the proof of a part of the second assertion. We consider
\begin{align*}
 \sigma_{\lambda}(x)=g_{\lambda}'(x)/g_{\lambda}(x)&=(\log g_{\lambda})'(x)=\lambda/x+\psi(x)-\psi(x+\lambda),
 \end{align*}
 and observe that
 \begin{align*}
 \sigma_{\lambda}(x)&=\int_0^{\infty}e^{-xt}\xi(t)\, dt,
 \end{align*}
 where
 $\xi(t)=\lambda-(1-e^{-\lambda t})/(1-e^{-t})$ is positive and increasing. This means that $\sigma_{\lambda}\in \text{CM(1)}$ so that
 \begin{align*}
 x^{2-\lambda}g_{\lambda}'(x)&=x^{1-\lambda}g_{\lambda}(x)\, x\sigma_{\lambda}(x)= \underset{\text{CM}}{\underbrace{\tfrac{\Gamma(x+1)}{\Gamma(x+\lambda)}}}\, \, \underset{\text{CM}}{\underbrace{x\sigma_{\lambda}(x)}}.
\end{align*}
Thus $g_{\lambda}$ belongs to $\mathcal B_{\lambda-1}$.
\end{proof}

\section*{Conclusion}
These lectures consisted of an investigation of classes of holomorphic functions and their relation to classes of functions on the positive real line.

We have demonstrated how complex methods can be used to solve questions for functions on the real line, in particular for the Gamma and related functions. Positivity has played an important part in obtaining integral representations of special functions.

\bibliographystyle{amsplain}

\bibliography{hlp-references}
 \end{document}